\theoremstyle{plain}
\newtheorem{theorem}{Theorem}
\newtheorem{lemma}[theorem]{Lemma}
\newtheorem{corollary}[theorem]{Corollary}
\newtheorem{proposition}[theorem]{Proposition}
\theoremstyle{definition}
\newtheorem{definition}[theorem]{Definition}
\newtheorem{question}[theorem]{Question}
\newtheorem{conjecture}[theorem]{Conjecture}
\newtheorem{notation}[theorem]{Notation}
\newcommand{\R}{\mathbb R}
\newcommand{\sS}{\mathbb S}
\newcommand{\B}{\mathbb B}
\newcommand{\cL}{P}
\numberwithin{theorem}{section} \numberwithin{equation}{section}
\title{BiLipschitz embeddings of spheres into jet space Carnot groups not admitting Lipschitz extensions}
\author{Derek Jung\footnote{Supported  by U.S. Department of Education GAANN fellowship P200A150319. \hfill\break {\it Key Words and Phrases:} sub-Riemannian geometry, biLipschitz embeddings, jet spaces, Carnot groups, Lipschitz extensions \hfill\break {\it 2010 Mathematics Subject Classification:} Primary 53C17, 58A20; Secondary 30L05, 26A16, 22E25} \\
University of Illinois at Urbana-Champaign \\ 
}
\date{\today}
\begin{document}

\raggedbottom

\maketitle

\begin{abstract}
For all $k,n\ge 1$, we construct a biLipschitz embedding of $\mathbb{S}^n$ into the jet space Carnot group $J^k(\mathbb{R}^n)$ that does not admit a Lipschitz extension to $\mathbb{B}^{n+1}$. 
Let $f:\mathbb{B}^n\to \mathbb{R}$ be a smooth, positive function with $k^{th}$-order derivatives that are approximately linear near $\partial \mathbb{B}^n$.
The embedding is given by taking the jet of $f$ on the upper hemisphere and the jet of $-f$ on the lower hemisphere, where we view $\mathbb{S}^n$ as two copies of $\mathbb{B}^n$. 
To prove the lack of a Lipschitz extension, we apply a factorization result of Wenger and Young for $n=1$ and modify an argument of Rigot and Wenger for $n\ge 2$.
\end{abstract}


\tableofcontents
 
\setcounter{section}{0}
\setcounter{subsection}{0}


\section{Introduction}

The existence of  extensions that preserve regularity  is a topic that   permeates  mathematics, especially in topology and analysis. 
In topology, one has the famous Tietze Extension Theorem.
In differential geometry, while one cannot smoothly extend \emph{any} smooth function defined on a subset of a manifold, one may if the subset is assumed to be closed (see for instance  \cite[Lemma 2.27]{L:ITS}).
An essential result of functional analysis  in the same vein is the Hahn-Banach Theorem from functional analysis.
These three results all confirm the existence of  extensions  that preserve the ``right'' regularity based on the context. 
Indeed, one can preserve \emph{continuity} for normal topological spaces, \emph{smoothness} for manifolds, and \emph{boundedness} for Banach spaces. 
For Carnot groups, the lack of a linear structure combined with Rademacher's Theorem and Pansu's generalization suggest that \emph{Lipschitz} is the ``right'' form of regularity to consider. 
In this paper, we will be interested in Lipschitz extensions of mappings into Carnot groups. 


The well-known  McShane-Whitney Extension Theorem states that  every Lipschitz function defined on a subset of  a metric space can be wholly  extended  in a Lipschitz fashion, while preserving the Lipschitz constant (see for instance \cite[Chapter 4]{HKST:SS}).
If one allows for a larger Lipschitz constant, one can replace $\R$ with $\R^n$.
With the finite-dimensional vector space case well-understood, other metric spaces have been considered as targets.  
Lipschitz extension results have been shown for mappings into Banach spaces and spaces of bounded curvature (see for instance \cite{JL:E, JLS:E, LN:EL} and  \cite{LS:ND, LS:K, V:C}, respectively).
Over the past decade, the problem for Carnot groups  has  drawn considerable attention, primarily for the Heisenberg groups and, more generally, jet space Carnot groups  \cite{DHLT:OTL, HST:HG, RW:LNE, WY:LE, WY:LHG}.
In this paper, we will be interested in considering the problem for the latter class.

In 2010, Rigot and Wenger proved that there exists a Lipschitz mapping from $\sS^n$ to $J^k(\R^n)$ that cannot be extended in a Lipschitz way to $\B^{n+1}$   \cite[Theorem 1.2]{RW:LNE}. 
For their proof, they actually construct  a Lipschitz mapping $f :\partial [0,1]^{n+1} \to J^k(\R^n)$ that does not admit a Lipschitz extension to $[0,1]^{n+1}$. 
Their mapping $f$ is constant on each line $\{x\}\times [0,1]$, $x\in \partial [0,1]^{n}$, and, in particular, is not biLipschitz.
In this paper, we provide an explicit construction of  a biLipschitz embedding of $\sS^n$ into $J^k(\R^n)$ that cannot be Lipschitz extended to $\B^{n+1}$. 

\begin{theorem}\label{main-theorem}
	For all $k,n\ge 1$, there exists a biLipschitz embedding $\phi:\sS^n \to J^k(\R^n)$ that does not admit a Lipschitz extension $\tilde{\phi}:\B^{n+1}\to J^k(\R^n)$. 
\end{theorem}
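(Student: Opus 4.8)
The plan is to construct $\phi$ explicitly via the jet space structure and then reduce the non-extendability to the known negative results cited (Wenger--Young for $n=1$, Rigot--Wenger for $n\ge 2$). Recall that a point of $J^k(\R^n)$ records the value and all partial derivatives up to order $k$ of a function at a point of $\R^n$, and that a $C^{k+1}$ function $g:\R^n\to\R$ lifts canonically to a horizontal curve (more precisely, a Legendrian $n$-dimensional submanifold) $j^k g$ in $J^k(\R^n)$; the sub-Riemannian distance between two such jet points is comparable, up to constants depending on $\|g\|_{C^{k+1}}$ and the base diameter, to the Euclidean distance between the base points. So the first step is: choose a smooth positive $f:\B^n\to\R$ whose $k$th-order derivatives are approximately affine near $\partial\B^n$ — the ``approximately linear'' condition is exactly what is needed so that the jets of $f$ and of $-f$ match up consistently along the equator $\partial\B^n$ when we glue two copies of $\B^n$ to form $\sS^n$. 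Then define $\phi$ on the upper hemisphere to be (a reparametrization of) $x\mapsto j^k f(x)$ and on the lower hemisphere to be $x\mapsto j^k(-f)(x)$, checking the gluing is well-defined and the map is globally biLipschitz onto its image.

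The second step is to verify $\phi$ is a biLipschitz embedding. The upper-bound (Lipschitz) direction follows from smoothness of $f$ and a standard estimate: the horizontal lift of a $C^{k+1}$ graph has a Lipschitz jet map with constant controlled by the $C^{k+1}$ norm of $f$ on the compact set $\overline{\B^n}$. For the lower bound one uses that the projection $J^k(\R^n)\to\R^n$ to the base is $1$-Lipschitz (it is a Carnot group homomorphism up to normalization), so $d_{J^k}(\phi(p),\phi(q))\gtrsim |\pi\phi(p)-\pi\phi(q)|$, which on each hemisphere is comparable to the intrinsic distance in $\sS^n$; the only subtlety is pairs of points on opposite hemispheres, where one also exploits that the ``vertical'' jet coordinate (the function value) has opposite signs $f$ vs $-f$ and so stays bounded away from each other except near the equator, where the base-point estimate already suffices. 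This is routine but must be done carefully to handle points straddling the equator.

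The final and conceptually essential step is the non-extendability. Suppose $\tilde\phi:\B^{n+1}\to J^k(\R^n)$ were a Lipschitz extension of $\phi$. For $n\ge 2$ the idea is to modify the Rigot--Wenger argument: post-compose with the projection $J^k(\R^n)\to\R^{N}$ onto an appropriate string of coordinates (the top-order jet coordinates together with the base), which detects a nontrivial ``horizontality obstruction''. Concretely, the restriction $\phi|_{\sS^n}$ carries a nonzero class — measured by an integral of a suitable differential form pulled back over $\sS^n$ — which any horizontal (hence, for $\tilde\phi$, $\mathrm{BV}$-Sobolev by Pansu--Rademacher) extension would be forced to kill by Stokes' theorem, a contradiction once the form is chosen so that $\int_{\sS^n}\phi^*\omega\ne 0$; the ``approximately linear near $\partial\B^n$'' normalization of $f$ is what makes this integral computable and nonzero. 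For $n=1$ one instead invokes the Wenger--Young factorization theorem: a Lipschitz extension to $\B^2$ would factor $\phi:\sS^1\to J^k(\R)$ through a metric tree (or through $\R$) up to controlled error, contradicting that $\phi$ is a biLipschitz embedding of the circle whose image has a genuinely $2$-dimensional ``filling'' cost. I expect this last step to be the main obstacle: one must identify the precise cohomological/filling invariant that $\phi$ realizes, show it is preserved under Lipschitz extension in the sub-Riemannian setting, and compute it to be nonzero using the explicit $f$ — the rest of the proof is bookkeeping about jet spaces and biLipschitz estimates.
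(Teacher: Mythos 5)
Your construction is exactly the one the paper uses (jets of $\pm f$ on the two hemispheres, with $f$ chosen so that the $k$th-order derivatives vanish to first order at $\partial\B^n$), and for $n=1$ you correctly identify the Wenger--Young factorization through a metric tree as the engine of the contradiction (the precise point is that two arcs in a tree sharing endpoints have equal images, contradicting injectivity; the ``filling cost'' framing is not what is used, but the tool is right). However, there are two concrete gaps.

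First, the co-Lipschitz estimate near the equator. You claim that ``near the equator, the base-point estimate already suffices.'' It does not: take $p=(x\sin(\pi\eta/2),\cos(\pi\eta/2))\in\sS^n_+$ and $q=(x\sin(\pi\eta/2),-\cos(\pi\eta/2))\in\sS^n_-$ with $\eta$ close to $1$. These have the \emph{same} base point $\eta x$, so the projection to $\R^n$ gives zero, yet $d_{\sS^n}(p,q)\approx 2\cos(\pi\eta/2)>0$. What actually furnishes the lower bound is the $k$th-order jet coordinate: one needs $|\partial_I f(\eta x)+\partial_I f(\theta y)|\gtrsim (1-\eta)+(1-\theta)$ for a suitable top-order $I$, which is precisely what the ``approximately linear'' normalization of $\partial^k f$ near $\partial\B^n$ delivers. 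You assign that normalization the wrong job (it is not what makes the jets of $f$ and $-f$ agree on the equator --- that is the $(k+1)$-fold vanishing of $f$); its real role is this co-Lipschitz bound. Also note that using the function value $u_0$ in its place is delicate: once the base points $\eta x$ and $\theta y$ differ, the group law introduces cross-terms into the $u_0$-coordinate of $\phi(p)^{-1}\odot\phi(q)$, whereas the top layer $u_I$ with $|I|=k$ has no such cross-terms.

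Second, the non-extension argument for $n\ge 2$ is not a ``closed form killed by Stokes'' argument, and as stated it would not give a contradiction. The form in play is $\omega=dx_1\wedge\cdots\wedge dx_n\wedge du_0$, and Stokes (in the Lipschitz form of Rigot--Wenger, Lemma 3.3) merely converts the boundary integral $\int_{\partial}h_1\,dh_2\wedge\cdots\wedge dh_{n+1}$ into the interior integral $\int dh_1\wedge\cdots\wedge dh_{n+1}$; neither is forced to vanish. The actual mechanism is a \emph{scaling mismatch under the Carnot dilation}: one applies $\delta_M$ and notes that the boundary integral picks up a factor $M^{n+k+1}$ (one $M$ per base coordinate and $M^{k+1}$ from $u_0$, by the grading), whereas the pointwise bound on $\omega$ in the Riemannian metric gives $\bigl|\int dh_1\wedge\cdots\wedge dh_{n+1}\bigr|\le \mathrm{Lip}(\iota\circ\delta_M\circ F)^{n+1}\le (M\lambda)^{n+1}$. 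Since $k\ge 1$, letting $M\to\infty$ produces the contradiction. Your proposal contains none of this dilation argument and without it the nonzero integral $\int_{\partial[-1,1]^{n+1}}\lambda_1\,d\lambda_2\wedge\cdots\wedge d\lambda_n\wedge d\bar f\ne 0$ is simply bounded, not contradictory. You are also missing that $f$ must be perturbed to ensure that this integral is actually nonzero; this is a nontrivial step in the paper (Proposition \ref{f-with-integral-condition}), not automatic from the normalization near $\partial\B^n$.
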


We remark that the theorem's statement would be false if we replaced $\sS^n$ with a lower dimensional sphere. 
Wenger and Young proved that every biLipschitz embedding of $\sS^m$ into $J^k(\R^n)$, $m<n$,  can be extended to $\B^{m+1}$ in a Lipschitz fashion \cite[Theorem 1.1]{WY:LE}.

BiLipschitz embeddings of spheres into Carnot groups have been used to prove the nondensity of  Lipschitz mappings in Sobolev spaces. 
In 2009, Balogh and F\"assler provided an example of a horizontal embedding $\phi:\sS^n\to \mathbb{H}^n$  that does not admit a Lipschitz extension $\tilde{\phi}:\mathbb{B}^{n+1}\to \mathbb{H}^n$ \cite[Theorem 1]{BF:RAL}. 
Their example consisted of the Legendrian lift of a Lagrangian map $f:\sS^n\to \R^{2n}$.
Dejarnette, Haj{\l}asz, Lukyanenko, and Tyson then proved in 2014 that every horizontal embedding $\phi:\sS^n\to \mathbb{H}^n$ does not admit a Lipschitz extension to $\mathbb{B}^{n+1}$ \cite[Proposition 4.7]{DHLT:OTL}.
The last authors  used such an embedding to prove that  the collection of Lipschitz mappings $\text{Lip}(\mathbb{B}^{n+1},\mathbb{H}^n) $ is not dense in the Sobolev space $W^{1,p}(\B^{n+1}, \mathbb{H}^n)$ for $n\le p <n+1$ \cite[Proposition 1.3]{DHLT:OTL}. 
Haj{\l}asz, Schikorra, and Tyson have also horizontal embedding to prove the non-density of Lipschitz mappings in Heisenberg group-valued Sobolev spaces  \cite[Theorem 1.9]{HST:HG}.
Theorem \ref{main-theorem} is a step towards proving the following non-approximation result for  $J^k(\R^n)$:

\begin{conjecture}\label{conjecture-nondensity}
Lipschitz mappings $\text{Lip}(\B^{n+1}, J^k(\R^n))$ are not dense in $W^{1,p}(\B^{n+1}, J^k(\R^n))$, when $n\le p <n+1$. 
\end{conjecture}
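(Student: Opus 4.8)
\emph{Proof strategy.} The plan is to realize Theorem~\ref{main-theorem} as a topological-type singularity inside a Sobolev map, following the template that, for $\R^m$-valued maps, goes back to Bethuel and Schoen--Uhlenbeck, and that Dejarnette--Haj{\l}asz--Lukyanenko--Tyson and Haj{\l}asz--Schikorra--Tyson carried out for the Heisenberg groups. Let $\phi:\sS^n\to J^k(\R^n)$ be the biLipschitz embedding of Theorem~\ref{main-theorem}, and define $u:\B^{n+1}\to J^k(\R^n)$ by $u(x)=\phi\!\left(x/|x|\right)$ for $x\neq 0$. The first step is routine: one checks that $u\in W^{1,p}(\B^{n+1},J^k(\R^n))$ for every $1\le p<n+1$, using any of the (here mutually equivalent) standard notions of a metric-space--valued Sobolev map. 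Indeed $\phi$ is Lipschitz and $x\mapsto x/|x|$ has pointwise derivative of size comparable to $|x|^{-1}$ away from the origin, so $u$ has an upper gradient $\lesssim|x|^{-1}$, and $\int_{\B^{n+1}}|x|^{-p}\,dx<\infty$ exactly when $p<n+1$; the same computation shows $u\notin W^{1,n+1}$, so the singularity at the origin is genuine.

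The heart of the matter is the second step: showing that this particular $u$ is not a strong $W^{1,p}$ limit of Lipschitz maps $\B^{n+1}\to J^k(\R^n)$ for $n\le p<n+1$, which is precisely the content of the conjecture for $u$. I would argue by contradiction. Suppose $u_j\in\mathrm{Lip}(\B^{n+1},J^k(\R^n))$ with $u_j\to u$ in $W^{1,p}$. After isometrically embedding $J^k(\R^n)$ into $\ell^\infty$ (Kuratowski), pass to a subsequence along which $u_j\to u$ almost everywhere and $\sup_j\|u_j\|_{W^{1,p}(\B^{n+1})}<\infty$. A Fubini argument over the sphere foliation $\{\,\sS^n_r=\{|x|=r\}\,\}_{0<r<1}$ then produces, along a further subsequence, a positive-measure set of radii $r_0\in(1/2,1)$ with $u_j|_{\sS^n_{r_0}}\to u|_{\sS^n_{r_0}}$ in $W^{1,p}(\sS^n_{r_0})$ and $\sup_j\|u_j\|_{W^{1,p}(\sS^n_{r_0})}<\infty$. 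Fix such an $r_0$: the limit $u|_{\sS^n_{r_0}}$ is just $\phi$ precomposed with a dilation, hence biLipschitz, and each $u_j|_{\overline{B}_{r_0}}$ is a Lipschitz extension of $u_j|_{\sS^n_{r_0}}$.

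The third step is to convert this into a genuine Lipschitz extension of $\phi$ to $\B^{n+1}$, contradicting Theorem~\ref{main-theorem}. For $p>n$, the Morrey embedding $W^{1,p}(\sS^n)\hookrightarrow C^{0,1-n/p}(\sS^n)$ forces $u_j|_{\sS^n_{r_0}}$ to converge \emph{uniformly} to the biLipschitz map $\phi(\,\cdot/r_0)$; using this, the uniform energy bound, and the fact that $J^k(\R^n)$ is geodesic, one would interpolate on the annulus $\{\,r_0\le|x|\le1\,\}$ between $u_j|_{\sS^n_{r_0}}$ and $\phi$ on $\sS^n$, glue the Lipschitz maps $u_j|_{\overline{B}_{r_0}}$, and pass to a limit to obtain a Lipschitz $\tilde\phi:\B^{n+1}\to J^k(\R^n)$ with $\tilde\phi|_{\sS^n}=\phi$. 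At the endpoint $p=n$ the Morrey embedding fails, and one would instead work with $W^{1,n}(\sS^n,\cdot)\subset\mathrm{VMO}$ and Brezis--Nirenberg type stability, tracking the relevant invariant of $\phi$ through VMO-convergence rather than uniform convergence.

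The genuine difficulty — and the reason Conjecture~\ref{conjecture-nondensity} remains open — is in this third step, and more precisely in the fact that the Lipschitz constants of the approximants $u_j$, hence of $u_j|_{\overline{B}_{r_0}}$, are \emph{not} controlled along the sequence. In the classical model $Y=\sS^n$ this is no obstacle: the obstruction to extending $\mathrm{id}_{\sS^n}$ is the topological degree, which is integer-valued and stable under any reasonable limit. But $J^k(\R^n)$ is homeomorphic to $\R^N$, hence contractible, so $\phi$ \emph{does} extend continuously, and the obstruction in Theorem~\ref{main-theorem} is purely metric in nature — it is detected, for $n=1$, through the Wenger--Young factorization, and, for $n\ge2$, through the Rigot--Wenger argument. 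What is needed, therefore, is a quantity attached to $\phi$ that is Lipschitz-sensitive enough to witness non-extendability, yet stable under $W^{1,p}$-convergence of $J^k(\R^n)$-valued maps for $p\ge n$, and that vanishes on Lipschitz extensions: in effect, a Sobolev-stable refinement of the non-extendability mechanism of Theorem~\ref{main-theorem}, with the critical exponent $p=n$ the hardest case. The remaining ingredients — the restriction theory for metric-valued Sobolev maps on the sphere foliation, the equivalence of the various definitions, the geodesic interpolation — are by comparison routine. It is plausible that $n=1$ is the most accessible case, since there the Wenger--Young factorization already packages the obstruction in a form that may survive Sobolev limits.
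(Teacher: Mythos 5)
The statement you were asked to prove is labelled a \emph{conjecture} in the paper, and the paper does not prove it: Theorem~\ref{main-theorem} is presented only as ``a step towards'' Conjecture~\ref{conjecture-nondensity}, and no argument for the conjecture itself appears anywhere in the text. Your proposal does not prove it either, and to your credit you say so explicitly. What you have written is a research plan in the style of Dejarnette--Haj{\l}asz--Lukyanenko--Tyson: take the radial map $u(x)=\phi(x/|x|)$, check $u\in W^{1,p}$ for $p<n+1$, run a Fubini argument over the sphere foliation to extract good radii, and try to upgrade convergence of approximants on a good sphere into a Lipschitz extension of $\phi$, contradicting Theorem~\ref{main-theorem}. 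Steps one and two are indeed routine, and step three is exactly where you concede the argument stops.

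The gap you name is the right one, and it is worth being precise about why the Heisenberg-group template does not transfer directly. In \cite{DHLT:OTL} and \cite{HST:HG} the non-extendability of the horizontal embedding $\sS^n\hookrightarrow\mathbb{H}^n$ is witnessed by an integral of an $n$-form over $\sS^n$ (a primitive of the symplectic form pulled back by the boundary map), which is an $n$-linear expression in first derivatives and hence continuous under $W^{1,p}$ convergence on the $n$-sphere precisely for $p\ge n$; that is what makes the range $n\le p<n+1$ accessible there. In the present paper the obstruction for $n\ge 2$ (Section~\ref{no-extension-section}) is the $(n{+}1)$-form integral $\int_{[-1,1]^{n+1}}dh_1\wedge\cdots\wedge dh_{n+1}$ over the \emph{filled} cube, converted to a boundary integral only via the Lipschitz Stokes lemma; as an $(n{+}1)$-linear expression in derivatives it is naturally controlled by $W^{1,n+1}$, which is exactly the exponent excluded by the conjecture. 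For $n=1$ the obstruction is the Wenger--Young tree factorization, which is not obviously stable under Sobolev limits at all. So the missing ingredient is, as you say, a Lipschitz-sensitive but $W^{1,p}$-stable invariant for $n\le p<n+1$; identifying one is the content of the open problem, not a technical step that can be filled in. Your write-up is an accurate map of the difficulty, but it should not be mistaken for a proof.
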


All  smooth horizontal embeddings of $\sS^n$ into $\mathbb{H}^n$ are biLipschitz \cite[Theorem 3.1]{DHLT:OTL}. 
The difficulty of proving that our embedding $\phi:\mathbb{S}^n\to J^k(\R^n)$ is biLipschitz will stem from the fact that it is  not smooth along the equator of $\sS^n$. 
In fact, $\phi$ will not even be differentiable at these points. 
Fortunately, $\phi$ will be horizontal when restricted to the lower and upper  hemispheres, which will imply that our embedding is biLipschitz when restricted to either of these halves.
Still, the lack of differentiability begs the following question:

\begin{question}
For $n\ge 2$, does there exist a smooth, horizontal embedding $\psi:\sS^n\hookrightarrow J^k(\R^n)$ that does not admit a Lipschitz extension to $\B^{n+1}$?
\end{question}

In Section \ref{background-section}, we review the structure of jet space Carnot groups and state notation. 
In Section \ref{circle-section}, we prove Theorem \ref{main-theorem} for $n=1$ and observe that $\pi_m^{Lip}(J^k(\R)) =0$ for all $m\ge 2$ and $k\ge 1$. 
In Section \ref{biLip-section}, we generalize the construction and prove our main theorem for $n \ge 2$. 
We treat the case $n=1$ separately because in this case, the function $f$ serving as the body of the embedding is an explicit polynomial and there are no mixed partial derivatives to deal with. 
Also, the proof that the embedding lacks a Lipschitz extension will be simpler.


\section*{Acknowledgements}

The author deeply thanks Jeremy Tyson for many hours of discussion on the content and presentation of this paper. 
The author is grateful to Ilya Kapovich for discussion shared about metric trees. 
The author thanks the reviewers for their comprehensive advice on improving the exposition and content of this paper. 
Most notably, the reviewers stated a much simpler proof for the lack of a Lipschitz extension for $n=1$ (which the author included), suggested the use of a compactness argument to help prove that the embedding is co-Lipschitz, found an error in the original proof for the lack of a Lipschitz extension for $n\ge 2$. 
The author also thanks the reviewers for suggesting the terminology ``co-Lipschitz'' to describe a bijective map with Lipschitz inverse. 
Finally, the author thanks his wife, Alyssa Loving Jung, for all of her support.


\section{Background}\label{background-section}

\subsection{Carnot groups as metric spaces}\label{metric-space-subsection}

A Lie algebra $\mathfrak{g}$ is said to admit an \textbf{$r$-step stratification} if 
\[
	\mathfrak{g} = \mathfrak{g}_1 \oplus\cdots \oplus \mathfrak{g}_r,
\]
where $\mathfrak{g}_1\subset \mathfrak{g}$ is a subspace, $\mathfrak{g}_{j+1}= [\mathfrak{g}_j, \mathfrak{g}_1]$ for $j=1,\ldots , r-1$, and $[\mathfrak{g}_r , \mathfrak{g}] = 0$. 
We call $\mathfrak{g}_1$ the \textbf{horizontal layer} of $\mathfrak{g}$. 
A \textbf{Carnot group} is a connected, simply connected, nilpotent Lie group with stratified Lie algebra. We say that a Carnot group is step $r$ if its Lie algebra is step $r$. 

A Carnot group may be identified (isomorphically) with a Euclidean space equipped with an operation  via coordinates of the first or second kind (see Section 2 of \cite{J:AV} for more detail). 
Henceforth, we will consider Carnot groups of the form $(\R^n,\star)$. 

Let $\{X^1,\ldots , X^{m_1}\}$ be a left-invariant frame for $Lie(\R^n,\star)$. 
The horizontal bundle $H(\R^n,\star)$ is defined fiberwise by
\[
	H_p(\R^n,\star) := \text{span} \{X_p^1,\ldots  , X_p^{m_1}\}. 
\]
A path $\gamma:[a,b]\to (\R^n,\star)$ is said to be \textbf{horizontal} if it is absolutely continuous as a map into $\R^n$ and satisfies $\gamma'(t) \in H_{\gamma(t)} (\R^n,\star)$ for a.e. $t\in [a,b]$.
The \textbf{length} of a horizontal path $\gamma:[a,b]\to (\R^n,\star)$ is defined by
\[
	l(\gamma) := \int_a^b |\gamma'(t)|_H \ dt ,
\]
where $|\cdot |_H$ is induced by declaring $\{X_p^1,\ldots  , X_p^{m_1}\}$ to be orthonormal.
 
Chow proved that every Carnot group is horizontally path-connected \cite{C:US}. 
Hence, we may define a \textbf{Carnot-Carath\'eodory metric} on $(\R^n,\star)$ by 
\[
	d_{cc}(p,q) := \inf_{\gamma:[a,b]\to (\R^n,\star)} \{ l(\gamma): \gamma \text{ is horizontal}, \gamma(a) = p, \ \gamma(b) = q\}.
\]
This forms a left-invariant, geodesic metric that is one-homogeneous with respect to the group's dilations. 
We will postpone discussion of these dilations to when we discuss jet space Carnot groups. 

It is natural to wonder how the Euclidean metric structure compares with the metric structure induced by the CC-metric. 
Nagel, Stein, and Wainger proved the remarkable fact that if $(\R^n,\star)$ is a step $r$ Carnot group, then the identity map $\text{id}:\R^n\to (\R^n,\star)$ is locally $\frac{1}{r}$-H\"older while the identity map $\text{id}:(\R^n,\star)\to \R^n$ is locally Lipschitz \cite[Proposition 1.1]{NSW:BAM}. 
Not only does this imply that $\R^n$ and $(\R^n,\star)$ share the same topology, it also allows one to estimate CC-distances between points by their coordinates through the Ball-Box Theorem. 
We will delay discussion of this theorem until we discuss the metric structure of jet space Carnot groups. 


\subsection{Jet spaces as Carnot groups}

We now recall the notation of jet space Carnot groups, following Section 3 of \cite{W:JS}.

Fix $k, n\ge 1$.
Given $x_0\in \R^n$ and $f\in C^k(\R^n)$, the $k^{th}$-order Taylor polynomial of $f$ at $x_0$ is given by
\[
	T_{x_0}^k(f) = \sum_{j=0}^k \sum_{I\in I(j)} \frac{\partial_I f(x_0)}{I!} (x-x_0)^I,
\]
where $I(j)$ denotes the set of $j$-indices $(i_1,\ldots , i_n)$ ($i_1 + \cdots + i_n = j$). 
For a convenient shorthand, \emph{we  write $\tilde{I}(j) := I(0)\cup \cdots \cup I(j)$, the set of all indices of length at most $j$.}

Given $x_0\in \R^n$, we can define an equivalence relation $\sim_{x_0}$ on $C^k(\R^n)$ by $f\sim_{x_0} g $ if $T_{x_0}^k(f) = T_{x_0}^k(g)$.
We call $[f]_{\sim_{x_0}}$ the $\mathbf{k}$\textbf{-jet} of $f$ at $x_0$ and denote it by $j_{x_0}^k(f)$. 
We  then define the jet space $J^k(\R^n)$ by
\[
	J^k(\R^n) := \bigcup_{x_0\in \R^n} C^k(\R^n)/_{\sim_{x_0}}.
\]

Define 
\[
	p:J^k(\R^n)\to \R^n, \quad p(j_{x_0}^k(f)) = x_0
\]
and 
\[
	u_I:J^k(\R^n) \to \R, \quad u_I (j_{x_0}^k(f)) := \partial_I f(x_0)
\]
for $I \in \tilde{I}(k).$
We have a global chart
\[
	\psi: J^k(\R^n) \to \R^n \times \R^{d(n,k)} \times \R^{d(n,k-1)} \times \cdots \times \R^{d(n,0)}
\]
given by $\psi = (p,u^{(k)})$, where
\[
	u^{(k)} := \{ u_I : I \in \tilde{I}(k) \}.  
\]
Here,  $d(n,j)  =\binom{n+j-1}{j}$ denotes the number of distinct $j$-indices over $n$ coordinates.

For all $f\in C^k(\R^n)$ and $I\in\tilde{I}(k-1)$, 
\[
	d (\partial_I f) = \sum_{j=1}^n \partial_{I+ e_j} f\cdot  dx^j. 
\]
This motivates us to define the $1$-forms 
\[
	\omega_I := du_I - \sum_{j=1}^n  u_{I+e_j} dx^j, \quad I \in \tilde{I}(k-1) 
\]
to serve as contact forms for $J^k(\R^n)$ (see Section 3.2 of \cite{W:JS} for more detail).
The horizontal bundle of $J^k(\R^n)$ is defined  by
\[
	HJ^k(\R^n) :=  \bigcap_{I\in \tilde{I}(k-1)} \ker \omega_I. 
\]
A global frame for $HJ^k(\R^n)$ is given by
\[
	\left\{ X_j^{(k)} : j = 1,\ldots , n\right\} \cup \left\{ \frac{\partial}{\partial u_I} : I \in I(k)\right\},
\]
where 
\[
	X_j ^{(k)}:= \frac{\partial}{\partial x_j} + \sum_{I\in \tilde{I}(k-1)} u_{I+e_j} \frac{\partial}{\partial u_I}, \quad j = 1,\ldots , n.
\]
We can extend this to a global frame of $TJ^k(\R^n)$ by including  $\frac{\partial}{\partial u_I}$ for $I \in \tilde{I}(k-1)$. 
With respect to the group operation on $J^k(\R^n)$ (to be defined soon), this frame is left-invariant.

The nontrivial commutator relations are given by
\[
	\left[  \frac{\partial}{\partial u_{I+e_j}} , X_j^{(k)}\right] = \frac{\partial}{\partial u_I}, \quad I\in \tilde{I}(k-1).
\]
Thus, $Lie(J^k(\R^n))$ admits a $(k+1)$-step stratification
\[
	Lie(J^k(\R^n)) = HJ^k(\R^n)  \oplus \left\langle\frac{\partial}{\partial u_I} : I \in I(k-1)\right\rangle\oplus \cdots \oplus \left\langle \frac{\partial}{\partial u_0}\right\rangle. 
\]

One  defines a group operation on $J^k(\R^n)$ by 
\[
	(x,u^{(k)}) \odot (y,v^{(k)}) = (x+y,uv^{(k)}),
\]
where 
\[
	uv_I := v_I + \sum_{I\le J } u_J \frac{y^{J-I}}{(J-I)!}, \quad I \in \tilde{I}(k).
\]
Here, we say $I\le J$ if $I_r \le J_r$ for all $r = 1,\ldots , n$.

We will now make jet spaces more grounded by explicitly writing out the Carnot group structure of the model filiform jet spaces $J^k(\R)$. 
The $k$-jet of $f\in C^k(\R)$ at a point $x_0$ is given by
\[
 j_{x_0}^k(f) = (x_0,f^{(k)}(x_0) , \ldots , f(x_0)). 
\]
The horizontal bundle $HJ^k(\R)$ is defined by the contact forms 
\[
	\omega_j := du_j - u_{j+1} dx , \quad j =0 ,\ldots , k-1,
\]
and is framed by the left-invariant vector fields  
$X^{(k)}:= \frac{\partial}{\partial x} + u_k \frac{\partial}{\partial u_{k-1}} + \cdots + u_1 \frac{\partial}{\partial u_0}$ and $\frac{\partial}{\partial u_k}$. 
A $(k+1)$-step stratification of $\text{Lie}(J^k(\R))$ is given by
\[
	\text{Lie}(J^k(\R)) := \left\langle X^{(k)} , \frac{\partial}{\partial u_k}\right\rangle \oplus \left\langle \frac{\partial}{\partial u_{k-1}}\right\rangle \oplus \cdots \oplus \left\langle \frac{\partial}{\partial u_0}\right\rangle. 
\]
The group operation on $J^k(\R)$ is given by
\[
	(x,u_k,\ldots , u_0) \odot (y,v_k,\ldots , v_0) = (z,w_k,\ldots , w_0),
\]
where $z=x+y$, $w_k = u_k+v_k$, and 
\[
	w_s  = u_s + v_s + \sum_{j=s+1} ^k u_j \frac{y^{j-s}} {(j-s)!}, \quad s=0,\ldots ,k-1. 
\]
Despite the much simpler appearance of $J^k(\R)$ relative to that of  $J^k(\R^n)$, $n\ge 2$, valuable intuition and methods can often be built up in the model filiform case, which can later be employed for higher dimensions.


\subsection{Jet space Carnot groups as metric spaces}

We expound on Subsection \ref{metric-space-subsection} for the special case of jet space Carnot groups.

For $\epsilon>0$, define the \textbf{dilation} $\delta_\epsilon :J^k(\R^n)\to J^k(\R^n)$ by
\[
x (\delta_\epsilon j_{x_0}^k(f)) = \epsilon x_0
\] 
and
\[
	u_I (\delta_\epsilon j_{x_0}^k(f)) := \epsilon^{k+1 - |I|} \partial_I f(x_0), \quad I \in \tilde{I}(k). 
\]
In the special case  $n=1$, these dilations take the form
\[
	\delta_\epsilon (x,u_k ,u_{k-1} ,\ldots , u_0) = (\epsilon x , \epsilon u_k , \epsilon^2 u_{k-1}, \ldots , \epsilon^{k+1} u_0). 
\]
As noted before, the CC-metric is one-homogeneous with respect to these dilations: 
\[
	d_{cc} (\delta_\epsilon j_{x_0}^k(f), \delta_\epsilon j_{y_0}^k(g)) = \epsilon\cdot d_{cc} (j_{x_0}^k(f), j_{y_0} (g)). 
\]

The result of Nagel, Stein, and Wainger \cite{NSW:BAM} allows us to estimate distances in jet spaces from the algebraic structure.
\begin{theorem}\label{theorem-bb}
{(Ball-Box Theorem for jet space Carnot groups)}
Fix $k,n\ge 1$.
For $\epsilon>0$ and $p\in J^k(\R^n)$, define
\[
	Box(\epsilon) := [-\epsilon,\epsilon]^{n+d(n,k)} \times \prod_{j=2}^{k+1} [-\epsilon^j,\epsilon^j]^{d(n,k+1-j)}
\]
and
\[
	B_{cc} (p,\epsilon) := \{q \in J^k(\R^n): d_{cc}(p,q) \le \epsilon\}.
\]
There exists $C>0$ such that for all $\epsilon>0$ and $p\in J^k(\R^n)$,
\[
	B_{cc}(p,\epsilon/C) \subseteq p \odot Box(\epsilon) \subseteq B_{cc} (p,C\epsilon).
\]
\end{theorem}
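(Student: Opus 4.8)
The plan is to deduce the two inclusions, via left-invariance and dilation-homogeneity of $d_{cc}$, from a single compactness statement at the identity, which is then settled using the Nagel--Stein--Wainger comparison quoted above. (One could instead simply invoke the general Ball--Box Theorem for Carnot groups; the argument below amounts to checking that the chart $\psi$, together with the dilations $\delta_\epsilon$, provides coordinates adapted to the stratification, the coordinate $u_I$ carrying weight $k+1-|I|$.)

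First, both inclusions are invariant under left translation: since $d_{cc}$ is left-invariant, $B_{cc}(p,\epsilon) = p\odot B_{cc}(0,\epsilon)$, where $0$ denotes the identity element (the origin in the chart $\psi$), and $p\odot Box(\epsilon)$ is by definition the left translate of $Box(\epsilon)$. So it suffices to prove $B_{cc}(0,\epsilon/C)\subseteq Box(\epsilon)\subseteq B_{cc}(0,C\epsilon)$ for every $\epsilon>0$. Next, in the chart $\psi$ the dilation $\delta_\epsilon$ acts by $x_i\mapsto\epsilon x_i$ and $u_I\mapsto \epsilon^{\,k+1-|I|}u_I$; since $I(k+1-j)$ is precisely the set of indices of weight $k+1-|I|=j$, this gives $\delta_\epsilon(Box(1))=Box(\epsilon)$, while the one-homogeneity $d_{cc}(\delta_\epsilon x,\delta_\epsilon y)=\epsilon\,d_{cc}(x,y)$ together with $\delta_\epsilon(0)=0$ gives $\delta_\epsilon(B_{cc}(0,r))=B_{cc}(0,\epsilon r)$. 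Applying $\delta_\epsilon$ therefore reduces everything to finding one constant $C$ with
\[
	B_{cc}(0,1/C)\subseteq Box(1)\subseteq B_{cc}(0,C).
\]

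For the right-hand inclusion: by Chow's theorem $d_{cc}(0,\cdot)$ is finite-valued, and by the Nagel--Stein--Wainger estimate the identity maps between $J^k(\R^n)$ and the ambient Euclidean space are locally Lipschitz in one direction and locally H\"older in the other, so $q\mapsto d_{cc}(0,q)$ is continuous for the Euclidean topology and vanishes only at the identity. Since $Box(1)$ is Euclidean-compact, $C_1:=\max_{q\in Box(1)}d_{cc}(0,q)<\infty$ works. For the left-hand inclusion, $0$ lies in the Euclidean interior of $Box(1)$ while $\partial Box(1)$ is compact and avoids $0$, so $m:=\min_{q\in\partial Box(1)}d_{cc}(0,q)>0$. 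If $q\notin Box(1)$, any horizontal path $\gamma$ from $0$ to $q$ is continuous, hence leaves $Box(1)$ through a first boundary point $q'\in\partial Box(1)$; the initial segment of $\gamma$ up to $q'$ has length at least $d_{cc}(0,q')\ge m$, so $l(\gamma)\ge m$, and taking the infimum over $\gamma$ gives $d_{cc}(0,q)\ge m$. Hence $B_{cc}(0,m/2)\subseteq Box(1)$, and $C:=\max(C_1,2/m)$ finishes the reduction; left-translating and dilating then yields the theorem.

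The only substantive input is the Nagel--Stein--Wainger comparison, which is what makes $d_{cc}(0,\cdot)$ a continuous function in the Euclidean topology that is positive away from the identity; the rest is the bookkeeping of matching the powers of $\epsilon$ in $Box(\epsilon)$ to the layers of the stratification (so that $\delta_\epsilon(Box(1))=Box(\epsilon)$) and an elementary connectedness argument for the lower bound. I expect that bookkeeping --- keeping track of which $u_I$ sits in which stratum, hence scales with which power of $\epsilon$ --- to be the step where errors are most likely, though it is routine.
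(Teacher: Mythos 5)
The paper does not actually give a proof of Theorem \ref{theorem-bb}: it is stated as a direct specialization of the result of Nagel--Stein--Wainger \cite{NSW:BAM}, with the weights of the coordinates $u_I$ determined by the stratification. Your proposal, by contrast, supplies a self-contained derivation from the weaker qualitative form of that result (the local H\"older/Lipschitz estimate for the identity maps between $J^k(\R^n)$ and the ambient Euclidean space). The derivation is correct: left-invariance reduces the base point to the identity, the fact that $u_I$ sits in the stratum of weight $k+1-|I|$ (so that $\delta_\epsilon(Box(1))=Box(\epsilon)$) together with the homogeneity $\delta_\epsilon(B_{cc}(0,r))=B_{cc}(0,\epsilon r)$ reduces to the unit scale, and the two inclusions at unit scale follow from Euclidean-compactness of $Box(1)$ and $\partial Box(1)$ plus continuity and positivity of $d_{cc}(0,\cdot)$, with the escape argument giving the lower bound. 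What this buys is a clean illustration of the standard principle that a sharp, scale-invariant Ball--Box estimate follows automatically from a ``soft'' local topological-equivalence statement once one has dilations adapted to the stratification; the paper simply takes the sharp statement from \cite{NSW:BAM} as given. One small remark: you do not in fact need $m/2$ --- any $m'<m$ works and then $C=\max(C_1,1/m')$ suffices --- but that is cosmetic.
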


From the Ball-Box Theorem, we obtain an important corollary which will serve as our most important tool for showing that our embeddings are biLipschitz. 
\begin{corollary}\label{cor-bb}
	Fix $k,n\ge 2$. 
There exists $C>0$ such that for all $(x,u^{(k)}) \in J^k(\R^n)$,
\[
	\frac{1}{C} \cdot d_{cc}(0,(x,u^{(k)})) \le \max \{ |x|, \ |u_I|^{1/(k+1-|I|)}: I\in \tilde{I}(k)\} \le C \cdot d_{cc}(0,(x,u^{(k)})). 
\]
\end{corollary}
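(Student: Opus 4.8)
The plan is to deduce the two-sided comparison directly from the Ball-Box Theorem (Theorem 2.2) applied at the origin, together with the exact scaling of the CC-metric under the dilations $\delta_\epsilon$. Observe first that by left-invariance we may take $p = 0$, and that $0 \odot Box(\epsilon) = Box(\epsilon)$, so Theorem 2.2 reads $B_{cc}(0,\epsilon/C) \subseteq Box(\epsilon) \subseteq B_{cc}(0,C\epsilon)$ for a fixed constant $C>0$ and all $\epsilon>0$. I will show each of the two desired inequalities by choosing $\epsilon$ appropriately in terms of the point $(x,u^{(k)})$.

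For the upper bound on $d_{cc}$, set $\epsilon := \max\{|x|,\ |u_I|^{1/(k+1-|I|)} : I\in\tilde I(k)\}$. Then $|x|\le\epsilon$ and, for each $I\in\tilde I(k)$ with $|I|=m$, we have $|u_I|\le\epsilon^{k+1-m}$. Reading off the block structure of $Box(\epsilon)$: the coordinates indexed by $n$ and by $d(n,k)$ (i.e.\ $|I|=k$, exponent $k+1-|I|=1$) live in $[-\epsilon,\epsilon]$, and the coordinates with $k+1-|I| = j$ for $j=2,\dots,k+1$ live in $[-\epsilon^{j},\epsilon^{j}]$; these are exactly the constraints $|u_I|\le\epsilon^{k+1-|I|}$. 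Hence $(x,u^{(k)})\in Box(\epsilon)\subseteq B_{cc}(0,C\epsilon)$, giving $d_{cc}(0,(x,u^{(k)}))\le C\epsilon$, which is one half of the claim.

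For the lower bound, let $r := d_{cc}(0,(x,u^{(k)}))$. Given any $\eta > r$, the point lies in $B_{cc}(0,\eta)\subseteq Box(C\eta)$ by Theorem 2.2 (with $\epsilon$ there equal to $C\eta$). The containment in $Box(C\eta)$ says precisely $|x|\le C\eta$ and $|u_I|\le (C\eta)^{k+1-|I|}$ for all $I$, i.e.\ $|u_I|^{1/(k+1-|I|)}\le C\eta$; taking the maximum over the listed quantities yields $\max\{|x|,\ |u_I|^{1/(k+1-|I|)}\}\le C\eta$. Letting $\eta\downarrow r$ gives $\max\{\cdots\}\le C\, d_{cc}(0,(x,u^{(k)}))$, i.e.\ $\tfrac1C d_{cc}(0,(x,u^{(k)}))\le\max\{\cdots\}$ after relabeling the constant. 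Combining the two bounds (and taking the larger of the two constants) proves the corollary.

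The only point requiring any care — and the one I would write out explicitly — is the bookkeeping that matches the exponent $k+1-|I|$ appearing in the statement with the block exponents $j\in\{2,\dots,k+1\}$ in the definition of $Box(\epsilon)$, including the boundary case $|I|=k$ where the exponent is $1$ and the coordinate sits in the $[-\epsilon,\epsilon]$ block alongside the base point $x$. There is no real obstacle here; the dilation structure was tailored so that $\delta_\epsilon(Box(1)) = Box(\epsilon)$, and the corollary is essentially a restatement of the Ball-Box Theorem in "weighted sup-norm" form.
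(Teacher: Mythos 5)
Your proof is correct, and it fills in exactly the argument the paper leaves implicit: the paper simply states Corollary 2.4 as an immediate consequence of the Ball--Box Theorem without writing out a proof, and your derivation — choosing $\epsilon$ to be the weighted sup-norm in one direction, and reading the coordinate bounds off the box containment in the other — is the intended one. The only point of pedantry is that the containment in $Box(C\eta)$ gives componentwise bounds $|x_i|\le C\eta$, i.e.\ $\|x\|_\infty\le C\eta$ rather than the Euclidean $|x|\le C\eta$, but this costs only a factor of $\sqrt n$ that is absorbed into $C$; likewise, since $B_{cc}(p,\epsilon)$ is defined as a closed ball here, you can take $\eta=r$ directly and dispense with the limiting step.
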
 

We will also need an observation from Rigot and Wenger \cite{RW:LNE}. This will be  key to constructing Lipschitz mappings from spheres into jet spaces.
As it is so important, and for the purposes of keeping this paper more self-contained, we will conclude this section by going over its proof.

\begin{proposition}\label{Lip-prop}
\cite[pages 4-5]{RW:LNE}
Fix $f \in C^{k+1}(\R^n).$
For all $x,y \in \R^n$,
\[
	d_{cc}(j_x^k(f), j_y^k(f)) \le \sup_{t\in [0,1]}  \left( 1 + \sum_{I\in I(k)} \sum_{j=1}^n (\partial_{I+e_j} f( x+ t(y-x)) ^2 \right)^{1/2} ||y-x||.
\]
In particular, $j^k(f) :\R^n \to J^k(\R^n)$ is locally Lipschitz.
\end{proposition}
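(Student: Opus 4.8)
The plan is to exhibit an explicit horizontal path joining $j_x^k(f)$ to $j_y^k(f)$ and bound its length. The natural candidate is the ``graph lift'' of the straight segment $[x,y]$ in $\R^n$ under the section $j^k(f):\R^n\to J^k(\R^n)$, i.e. the curve $\gamma(t) := j_{x+t(y-x)}^k(f)$ for $t\in[0,1]$. The first step is to verify that $\gamma$ is horizontal. In the coordinates $\psi=(p,u^{(k)})$ we have $p(\gamma(t)) = x+t(y-x)$ and $u_I(\gamma(t)) = \partial_I f(x+t(y-x))$ for $I\in\tilde I(k)$, so $\gamma$ is smooth (here $f\in C^{k+1}$ is used so that the top-order coordinates $u_I$, $I\in I(k)$, are still $C^1$). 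To see horizontality, recall from the background section that $HJ^k(\R^n) = \bigcap_{I\in\tilde I(k-1)}\ker\omega_I$ with $\omega_I = du_I - \sum_{j=1}^n u_{I+e_j}\,dx^j$. Plugging in, along $\gamma$ one gets $\omega_I(\gamma'(t)) = \sum_j \partial_{I+e_j}f(\gamma_0(t))(y-x)_j - \sum_j \partial_{I+e_j}f(\gamma_0(t))(y-x)_j = 0$, where $\gamma_0(t) = x+t(y-x)$; this is exactly the identity $d(\partial_I f) = \sum_j \partial_{I+e_j}f\,dx^j$ recorded in the excerpt. So $\gamma$ is indeed a horizontal curve.

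The second step is to compute the horizontal length. Expanding $\gamma'(t)$ in the left-invariant horizontal frame $\{X_j^{(k)}\}\cup\{\partial/\partial u_I : I\in I(k)\}$: the $\R^n$-component forces the $X_j^{(k)}$-coefficients to be $(y-x)_j$, and a short check (using the definition of $X_j^{(k)}$, which already accounts for the $u_I$-motion for $|I|\le k-1$) shows the coefficient of $\partial/\partial u_I$ for $I\in I(k)$ is $\frac{d}{dt}\partial_I f(\gamma_0(t)) = \sum_{j=1}^n \partial_{I+e_j}f(\gamma_0(t))(y-x)_j$. Since the frame is declared orthonormal,
\[
|\gamma'(t)|_H = \left(\sum_{j=1}^n (y-x)_j^2 + \sum_{I\in I(k)}\Big(\sum_{j=1}^n \partial_{I+e_j}f(\gamma_0(t))(y-x)_j\Big)^2\right)^{1/2}.
\]
Applying Cauchy--Schwarz to each inner sum, $\big(\sum_j \partial_{I+e_j}f\,(y-x)_j\big)^2 \le \big(\sum_j \partial_{I+e_j}f^2\big)\,\|y-x\|^2$, and factoring out $\|y-x\|^2$ from every term yields $|\gamma'(t)|_H \le \big(1 + \sum_{I\in I(k)}\sum_{j=1}^n \partial_{I+e_j}f(\gamma_0(t))^2\big)^{1/2}\|y-x\|$. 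Integrating over $t\in[0,1]$ and bounding the integrand by its supremum gives the claimed inequality $d_{cc}(j_x^k(f),j_y^k(f)) \le l(\gamma) \le \sup_{t\in[0,1]}\big(1+\sum_{I\in I(k)}\sum_{j=1}^n \partial_{I+e_j}f(x+t(y-x))^2\big)^{1/2}\|y-x\|$.

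For the ``in particular'' clause: on any bounded set $K\subset\R^n$, the convex hull $\hat K$ is compact, and since $f\in C^{k+1}$ the functions $\partial_{I+e_j}f$ are continuous hence bounded on $\hat K$; so for $x,y\in K$ the supremum above is bounded by a constant $L_K$ depending only on $K$ and $f$, giving $d_{cc}(j_x^k(f),j_y^k(f)) \le L_K\|y-x\|$, i.e. $j^k(f)$ is locally Lipschitz.

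I do not expect a genuine obstacle here; the only point requiring care is the bookkeeping in the second step — correctly identifying the frame coefficients of $\gamma'(t)$, in particular checking that the lower-order coordinate motions ($u_I$ with $|I|\le k-1$) are entirely absorbed into the $X_j^{(k)}$ terms and contribute nothing extra to $|\gamma'(t)|_H$, which is precisely why the contact forms $\omega_I$ vanish along $\gamma$ and why only the $I\in I(k)$ terms survive. The regularity hypothesis $f\in C^{k+1}$ (rather than merely $C^k$) is exactly what makes $\gamma$ a $C^1$, hence absolutely continuous, horizontal curve.
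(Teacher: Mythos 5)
Your proposal is correct and follows the same route as the paper: lift the straight segment via $j^k(f)$, observe the resulting curve is horizontal by the chain rule, and bound its length to control $d_{cc}$. You usefully spell out the Cauchy--Schwarz step (passing from $\big(\sum_j \partial_{I+e_j}f\,(y-x)_j\big)^2$ to $\|y-x\|^2\sum_j(\partial_{I+e_j}f)^2$) that the paper leaves implicit when it jumps from the horizontal-derivative formula to the stated supremum bound.
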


\begin{proof}
	For $f\in C^{k+1}(\R^n)$, the jet map $j^k(f)$ is $C^1$ and horizontal with
\[
	\partial_{x_j} (j^k_x(f)) = X_j^{(k)}(j_x^k(f)) + \sum_{I\in I(k)}\partial_{I+e_j} f(x) \cdot\frac{\partial}{\partial u_I}.  
\]
For $x,y \in \R^n$, define $\gamma:[0,1] \to \R^n$, $\gamma(t) := x + t(y-x)$, to be  the straight line path connecting $x$ to $y$.
The chain rule implies $j^k(f) \circ \gamma$ is a horizontal path connecting $j_x^k(f)$ to $j_y^k(f)$.
Hence, by the definition of the CC-metric, 
\[
	d_{cc}(j_x^k(f) ,j_y^k(f)) \le  \sup_{t\in [0,1]}  \left( 1 + \sum_{I\in I(k)} \sum_{j=1}^n (\partial_{I+e_j} f( x+ t(y-x)) ^2 \right)^{1/2} ||y-x||. 
\]

As $f\in C^{k+1}(\R^n)$, $\partial_{I+e_j} f$ is bounded on compact sets for each $I\in I(k)$ and $j= 1,\ldots , n$.
It follows that the restriction of $j^k(f)$ to each compact set is Lipschitz. 
\end{proof}



\section{Embedding of the circle into $J^k(\R)$}\label{circle-section}

We  begin this section by constructing a biLipschitz embedding of  $\sS^1$ into $J^k(\R)$.
The main idea of the proof is to view $\sS^1$ as two copies of the interval $[0,\pi]$ and then apply Proposition \ref{Lip-prop} to a function with a $k^{th}$-derivative that is approximately linear near $0$ and $\pi$.


\subsection{BiLipschitz embedding  $\sS^1 \hookrightarrow J^k(\R)$}


\begin{definition}
	Fix $k\ge 1$. Define the polynomial $f_k:\R\to \R$ by $f_k(\theta) := \theta^{k+1}(\pi-\theta)^{k+1}$. 
\end{definition} 

As $f_k$ is smooth on $\R$, Proposition \ref{Lip-prop} implies that $j^k(f_k):[0,\pi]\to J^k(\R)$ is Lipschitz. 
In addition, as 
\[
	j^k_\theta (f_k)^{-1} \odot j^k_\eta (f_k) = (\eta -\theta, f_k^{(k)}(\eta ) -f_k^{(k)}(\theta),\ldots ),
\]
Corollary \ref{cor-bb} and left-invariance of $d_{cc}$ imply
\[
	|\eta -\theta|\lesssim d_{cc}(0,j^k_\theta (f_k)^{-1} \odot j^k_\eta (f_k)) =  d_{cc}(j^k_\theta(f_k) , j^k_\eta(f_k)).
\]
Here, we write $\lesssim$ to denote that the left quantity is bounded above by the right quantity up to a positive factor depending only on $k$. 

We have proven
\begin{lemma}\label{biLip-halves}
	The map $j^k(f_k) : [0,\pi ] \to J^k(\R)$ is biLipschitz. 
\end{lemma}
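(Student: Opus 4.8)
The plan is to establish the two inequalities that together constitute the biLipschitz condition: an upper bound (Lipschitz) and a lower bound (co-Lipschitz). The upper bound is essentially already in hand. Since $f_k(\theta) = \theta^{k+1}(\pi-\theta)^{k+1}$ is a polynomial, it lies in $C^{k+1}(\R)$ (indeed $C^\infty$), so Proposition \ref{Lip-prop} applies directly: for $\theta, \eta \in [0,\pi]$ we get $d_{cc}(j_\theta^k(f_k), j_\eta^k(f_k)) \le L\,|\eta - \theta|$, where $L$ is the supremum over $[0,\pi]$ of the relevant expression involving $f_k^{(k+1)}$, which is finite because $[0,\pi]$ is compact and $f_k^{(k+1)}$ is continuous. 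This handles one direction with no real work.

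For the lower bound, I would argue as in the displayed computation preceding the lemma. By left-invariance of $d_{cc}$,
\[
d_{cc}(j_\theta^k(f_k), j_\eta^k(f_k)) = d_{cc}\bigl(0,\ j_\theta^k(f_k)^{-1}\odot j_\eta^k(f_k)\bigr),
\]
and by the formula for the group operation on $J^k(\R)$, the first coordinate of $j_\theta^k(f_k)^{-1}\odot j_\eta^k(f_k)$ is exactly $\eta - \theta$. Now I would invoke the Ball-Box estimate of Corollary \ref{cor-bb} (or rather its $n=1$ analogue; in the filiform case the same one-dimensional comparison $\max\{|x|,\,|u_j|^{1/(k+1-j)}\} \asymp d_{cc}(0,(x,u_k,\dots,u_0))$ holds, which follows from Theorem \ref{theorem-bb} exactly as Corollary \ref{cor-bb} does for $n\ge 2$), to conclude that the $cc$-distance to the origin dominates the absolute value of the first coordinate up to a constant depending only on $k$. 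This yields $|\eta - \theta| \lesssim d_{cc}(j_\theta^k(f_k), j_\eta^k(f_k))$, which is the co-Lipschitz bound. Combining the two directions gives that $j^k(f_k):[0,\pi]\to J^k(\R)$ is biLipschitz, with both constants depending only on $k$.

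The main (and essentially only) subtlety is that Corollary \ref{cor-bb} as stated in the excerpt is phrased for $n\ge 2$, whereas here $n=1$, so one must note that the Ball-Box comparison holds verbatim in the filiform setting with the explicit dilation weights $(1,2,\dots,k+1)$ appearing in $\delta_\epsilon(x,u_k,\dots,u_0) = (\epsilon x, \epsilon u_k,\dots,\epsilon^{k+1}u_0)$; this is immediate from Theorem \ref{theorem-bb} together with the one-homogeneity of $d_{cc}$, so no new ideas are needed. Everything else is bookkeeping: verifying $f_k \in C^{k+1}$, applying Proposition \ref{Lip-prop} for the upper bound, and reading off the first coordinate of the group product for the lower bound.
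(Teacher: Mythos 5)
Your proposal is correct and follows essentially the same two-step argument as the paper: Proposition \ref{Lip-prop} for the Lipschitz upper bound, and left-invariance of $d_{cc}$ together with the Ball-Box estimate applied to the first coordinate of $j_\theta^k(f_k)^{-1}\odot j_\eta^k(f_k)$ for the co-Lipschitz lower bound. You also helpfully flag the small technicality that Corollary \ref{cor-bb} is stated in the paper for $n\ge 2$ but holds for $n=1$ by the same reasoning, which the paper uses implicitly without comment.
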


Gluing together two copies of $[0,\pi]$ at the endpoints, we can construct a continuous map of $\mathbb{S}^1$ into $J^k(\R)$.

\begin{definition}\label{circle-embedding-def}
Define $\phi:\sS^1\to J^k(\R)$ by
\[
	\phi(e^{i\theta}) := \left\{\begin{array}{cl}
j_\theta^k(f_k) &\quad\text{if } 0\le \theta \le \pi\\
j_{2\pi - \theta}^k(-f_k) &\quad\text{if } \pi \le \theta \le 2\pi.
\end{array}\right.
\]
\end{definition}

This map is well-defined  because $f_k^{(j)} (0) = f_k^{(j)}(\pi) = 0$ for $j=0,\ldots , k$.
A more intuitive expression of $\phi$ (which matches the original definition) is 
$\phi(e^{i\theta}) = j_\theta^k(f_k)$ and $\phi(e^{-i\theta}) = j_{\theta}^k(-f_k)$ for $0\le \theta\le \pi$. 
In this subsection, we will prove:

 \begin{theorem}\label{biLip-embed-1}
The map $\phi:\sS^1\to J^k(\R)$ is a biLipschitz embedding.
\end{theorem}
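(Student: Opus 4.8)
The plan is to show that $\phi:\sS^1\to J^k(\R)$ is injective, Lipschitz, and co-Lipschitz (i.e.\ has a Lipschitz inverse onto its image). Injectivity is essentially immediate from the construction: the base-point coordinate $p(\phi(e^{i\theta}))$ equals $\theta$ for $\theta\in[0,\pi]$ and $2\pi-\theta$ for $\theta\in[\pi,2\pi]$, so two points with the same image must have the same base point in $[0,\pi]$, and they can only come from the two different hemispheres if that base point is $0$ or $\pi$; but at $0$ and $\pi$ all the jet coordinates of $f_k$ and $-f_k$ vanish and agree, and these are precisely the two gluing points, so $\phi$ is one-to-one on $\sS^1$. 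Lipschitzness of each of the two halves $j^k(f_k)$ and $j^k(-f_k)$ is already recorded in Lemma~\ref{biLip-halves} (via Proposition~\ref{Lip-prop}); to get Lipschitzness of $\phi$ on all of $\sS^1$ I would handle a pair of points lying in opposite hemispheres by splitting the arc between them at one of the gluing points $1=e^{i0}$ or $-1=e^{i\pi}$, using that $\phi$ maps both of these to the same point (so the triangle inequality in $J^k(\R)$ combines with the Euclidean triangle inequality along the circle), and noting the arc-length metric on $\sS^1$ is comparable to the chordal one.

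The substantive part is the lower bound $d_{cc}(\phi(e^{i\theta}),\phi(e^{i\eta}))\gtrsim |e^{i\theta}-e^{i\eta}|$. Within a single hemisphere this is exactly the co-Lipschitz estimate derived right before Lemma~\ref{biLip-halves}: left-invariance plus Corollary~\ref{cor-bb} give $d_{cc}(j^k_\theta(f_k),j^k_\eta(f_k))\gtrsim |\theta-\eta|$, and likewise for $-f_k$. The remaining case is $\theta\in[0,\pi]$ and $\eta\in[\pi,2\pi]$, i.e.\ the two points sit on opposite hemispheres. Here I would compute $\phi(e^{i\theta})^{-1}\odot\phi(e^{i\eta})$ using the group law on $J^k(\R)$ and look at its coordinates under Corollary~\ref{cor-bb}. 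The base-point coordinate is $(2\pi-\eta)-\theta$; the top jet coordinate is $-f_k^{(k)}(\theta)-f_k^{(k)}(2\pi-\eta)$. Writing $\alpha:=\theta\in[0,\pi]$ and $\beta:=2\pi-\eta\in[0,\pi]$, I want $d_{cc}\gtrsim$ the circle-distance between $e^{i\alpha}$ and $e^{-i\beta}$, which is comparable to $\alpha+\beta$ when $\alpha+\beta\le\pi$ (both points near $1$) and to $(\pi-\alpha)+(\pi-\beta)$ when $\alpha+\beta\ge\pi$ (both points near $-1$); equivalently it is comparable to $\min(\alpha,\pi-\alpha)+\min(\beta,\pi-\beta)$ plus a correction, but the cleanest formulation is: the chordal distance is comparable to $\min\{\alpha+\beta,\ (\pi-\alpha)+(\pi-\beta)\}$ when the points are on opposite sides. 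So it suffices to bound $d_{cc}$ below by this quantity.

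The key estimate is therefore a lower bound on $|f_k^{(k)}(\alpha)+f_k^{(k)}(\beta)|$ (and, if needed, the lower jet coordinates) in terms of $\alpha+\beta$ near $0$ and of $(\pi-\alpha)+(\pi-\beta)$ near $\pi$. Since $f_k(\theta)=\theta^{k+1}(\pi-\theta)^{k+1}$ is nonnegative on $[0,\pi]$, vanishes to order exactly $k+1$ at $0$ and at $\pi$, and $f_k^{(k)}$ has a simple zero of the right order there, one checks that $f_k^{(k)}(\theta)\ge c\,\theta$ for $\theta$ in a right-neighborhood of $0$ (because $f_k^{(k)}(0)=0$ but $f_k^{(k+1)}(0)=(k+1)!\,\pi^{k+1}>0$, so $f_k^{(k)}(\theta)\sim (k+1)!\,\pi^{k+1}\theta$) and symmetrically $f_k^{(k)}(\theta)\ge c\,(\pi-\theta)$ near $\pi$; moreover $f_k^{(k)}>0$ on the open interval $(0,\pi)$, so away from the endpoints $f_k^{(k)}$ is bounded below by a positive constant. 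Both summands $f_k^{(k)}(\alpha)$ and $f_k^{(k)}(\beta)$ having the same sign is exactly what makes $|{-}f_k^{(k)}(\alpha)-f_k^{(k)}(\beta)|=f_k^{(k)}(\alpha)+f_k^{(k)}(\beta)$, with no cancellation, so this sum is $\gtrsim \alpha$ near $0$ in $\alpha$, $\gtrsim\beta$ near $0$ in $\beta$, and bounded below by a constant once either variable is in the interior; a short case analysis then yields the sum $\gtrsim\min\{\alpha+\beta,(\pi-\alpha)+(\pi-\beta)\}$, which via Corollary~\ref{cor-bb} (using the $u_k$-coordinate, which scales linearly) gives the desired lower bound on $d_{cc}$. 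I expect this monotonicity/nonnegativity bookkeeping for $f_k^{(k)}$ near the two endpoints to be the main obstacle, since everything else reduces to the already-proven per-hemisphere estimates and triangle inequalities; it is routine but must be done carefully because it is precisely where the non-differentiability at $e^{i0}$ and $e^{i\pi}$ could in principle destroy the co-Lipschitz property, and the argument shows it does not.
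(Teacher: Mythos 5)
Your overall strategy (triangle-inequality Lipschitz upper bound plus a coordinate-based co-Lipschitz lower bound via Corollary~\ref{cor-bb}) is the same as the paper's, but there is a genuine error at the exact spot you flag as ``the main obstacle.'' You assert that $f_k^{(k)}(\theta)\ge c(\pi-\theta)$ near $\pi$ and that $f_k^{(k)}>0$ on the whole open interval $(0,\pi)$. Both claims are false. Near $\pi$, write $s=\pi-\theta$; since $f_k(\pi-s)=\pi^{k+1}s^{k+1}+O(s^{k+2})$ and $\frac{d^k}{d\theta^k}=(-1)^k\frac{d^k}{ds^k}$, one gets $f_k^{(k)}(\theta)=(-1)^k(k+1)!\,\pi^{k+1}(\pi-\theta)+O((\pi-\theta)^2)$, which is \emph{negative} near $\pi$ whenever $k$ is odd. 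And on the interior, already for $k=1$ one has $f_1'(\theta)=2\theta(\pi-\theta)(\pi-2\theta)$, which vanishes at $\theta=\pi/2$ and changes sign, so there is no uniform positive lower bound on $|f_k^{(k)}|$ there, and the sum $f_k^{(k)}(\alpha)+f_k^{(k)}(\beta)$ can vanish for $\alpha,\beta$ strictly inside $(0,\pi)$. Consequently ``both summands have the same sign, so no cancellation'' breaks down in the interior and in the mixed (one argument near $0$, the other near $\pi$, $k$ odd) configuration, and the $u_k$-coordinate alone cannot deliver the lower bound you want.

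The paper's proof repairs exactly these two points. Its Lemma~\ref{linear-lemma} states the endpoint asymptotics with the sign $(-1)^k$ built in, so the near-$0$ and near-$\pi$ subcases each have summands of matching sign and your intended argument goes through there. For the remaining configurations (one of $\theta,\eta$ in $[\epsilon,\pi-\epsilon]$, or $|\theta-\eta|\ge\pi-2\epsilon$), the paper abandons the attempt to lower-bound the $u_k$-coordinate and instead observes that $j^k_\theta(f_k)\ne j^k_\eta(-f_k)$ because the $u_0$-coordinate satisfies $f_k(\theta)>0\ge -f_k(\eta)$ on the relevant range; since the relevant sets of pairs are compact, the Extreme Value Theorem yields a uniform positive lower bound on $d_{cc}$, and boundedness of $\sS^1$ converts this into the co-Lipschitz inequality. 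You gesture at ``if needed, the lower jet coordinates'' but do not carry this out, and the $u_0$-coordinate plus compactness is precisely the missing ingredient; without it the proof does not close.
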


Denote the upper and lower semicircles by $\sS^1_+ := \{e^{i\theta} : 0\le \theta\le \pi\}$ and $\sS^1_-:= \{e^{i\theta}:\pi\le \theta \le 2\pi\}$, respectively. 
As  $e^{i\theta} : [0,\pi]\to \sS^1_+$  and $e^{-i\theta}:[0,\pi]\to \sS^1_- $ are biLipschitz, the restrictions $\phi|_{\sS^1_+}$ and $\phi|_{\sS^1_-}$ are biLipschitz. 
It remains to prove that 
\[
	d_{cc}(\phi(e^{i\theta}) , \phi(e^{i\eta}) ) \approx d_{\sS^1} (e^{i\theta},e^{i\eta}) \quad\text{for } e^{i\theta } \in \sS^1_+, \ e^{i\eta} \in \sS^1_-. 
\]
By $d_{\sS^1}$, we mean the geodesic path metric on $\sS^1$.
We write $A\approx B$ to denote that there exists a single constant $C$ such that 
\[
	\frac{1}{C}\cdot A \le B \le C\cdot A,
\]
for all relevant choices of $A$ and $B$. 
\emph{We will use this notation throughout this paper.}
Note that since we are merely showing maps are biLipschitz and not caring about the actual Lipschitz constants, we can allow for positive constant factors in our comparisons.

Proving that $\phi$ is Lipschitz  follows easily from the triangle inequality combined with the fact that $\phi$ is biLipschitz when restricted to the upper and lower semicircles.
Indeed, if the geodesic connecting $e^{i\theta} \in \sS^1_+$ to $e^{i\eta}\in \sS^1_-$ passes through $e^{i0}$, then 
\begin{align*}
	d_{cc}(\phi(e^{i\theta}) , \phi(e^{i\eta}) ) 
&\le d_{cc}(\phi(e^{i\theta}) , \phi(e^{i 0}) )  + d_{cc}(\phi(e^{i0}) , \phi(e^{i\eta}) ) \\
& \approx d_{\sS^1} (e^{i\theta},e^{i0}) + d_{\sS^1} (e^{i0}, e^{i\eta}) \\
& = d_{\sS^1} (e^{i\theta},e^{i\eta}).
\end{align*}
The same reasoning works  if the geodesic passes through $e^{i\pi}$. 
We have shown
\begin{proposition}\label{lip-1}
	$\phi:\sS^1\to J^k(\R)$ is Lipschitz. 
\end{proposition}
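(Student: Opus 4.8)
The statement to prove is Proposition~\ref{lip-1}, that $\phi:\sS^1\to J^k(\R)$ is Lipschitz. The plan is exactly the argument sketched in the paragraph preceding the proposition, now carried out in full: reduce from arbitrary pairs of points to pairs lying on a common semicircle (where we already know $\phi$ is biLipschitz, hence Lipschitz) by inserting one of the two ``gluing points'' $e^{i0}$ or $e^{i\pi}$ along the shorter arc joining the two points.

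First I would fix $e^{i\theta}\in\sS^1_+$ and $e^{i\eta}\in\sS^1_-$; the only nontrivial case is when $e^{i\theta}$ and $e^{i\eta}$ lie on different semicircles, since within a single semicircle the claim is the already-established biLipschitz bound on $\phi|_{\sS^1_+}$ and $\phi|_{\sS^1_-}$. A geodesic in $\sS^1$ from $e^{i\theta}$ to $e^{i\eta}$ must cross the equator $\{e^{i0},e^{i\pi}\}$, so it passes through one of the two points $e^{i0}$ or $e^{i\pi}$; call it $w$. Then $w$ lies on the same semicircle as $e^{i\theta}$ (indeed $w$ lies on both), and also on the same semicircle as $e^{i\eta}$, and $d_{\sS^1}(e^{i\theta},w)+d_{\sS^1}(w,e^{i\eta})=d_{\sS^1}(e^{i\theta},e^{i\eta})$ because $w$ is on the geodesic. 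Applying the triangle inequality for $d_{cc}$ through $\phi(w)$ and then the biLipschitz estimate for $\phi$ on each semicircle separately gives
\[
	d_{cc}(\phi(e^{i\theta}),\phi(e^{i\eta})) \le d_{cc}(\phi(e^{i\theta}),\phi(w)) + d_{cc}(\phi(w),\phi(e^{i\eta})) \lesssim d_{\sS^1}(e^{i\theta},w) + d_{\sS^1}(w,e^{i\eta}) = d_{\sS^1}(e^{i\theta},e^{i\eta}),
\]
which is the desired bound. Combining this with the semicircle case (and noting that a bound up to a positive multiplicative constant is all we need, per the convention fixed in the text) shows $\phi$ is Lipschitz on all of $\sS^1$.

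There is no real obstacle here: the only point requiring a sentence of care is the well-definedness of $\phi$ at the gluing points $e^{i0}$ and $e^{i\pi}$ and the fact that $\phi(w)$ is a legitimate common value for the restrictions to both semicircles — this is exactly the vanishing $f_k^{(j)}(0)=f_k^{(j)}(\pi)=0$ for $j=0,\dots,k$ already recorded after Definition~\ref{circle-embedding-def}. The mild ``obstacle'', if any, is purely bookkeeping: one should make sure the two implicit constants coming from $\phi|_{\sS^1_+}$ and $\phi|_{\sS^1_-}$ are absorbed into a single constant, which is immediate since there are only two of them. I would write the proof in three short lines: (i) reduce to the cross-equator case; (ii) locate $w\in\{e^{i0},e^{i\pi}\}$ on the geodesic and observe it lies on both semicircles; (iii) apply the triangle inequality and the semicircle biLipschitz bounds as displayed above.
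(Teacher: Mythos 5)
Your proof is correct and follows essentially the same route as the paper's: reduce to the cross-equator case, insert the equator point $e^{i0}$ or $e^{i\pi}$ lying on the $\sS^1$-geodesic, and combine the triangle inequality with the biLipschitz bounds on each semicircle. The extra remarks on well-definedness at the gluing points and on absorbing the two implicit constants are sensible bookkeeping but add nothing beyond what the paper already records.
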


We are now halfway towards proving that $\phi$ is biLipschitz. 

\begin{definition}\label{co-Lipschitz-def}
	A map $g:X\to Y$ between metric spaces is said to be \textbf{co-Lipschitz} if there exists a constant $C>0$ such that 
\[
	d_Y (g(x_1),g(x_2)) \ge \frac{1}{C} \cdot d_X(x_1,x_2)\quad \text{for all } x_1,x_2\in X.
\]
If a map is co-Lipschitz, we say it has the \textbf{co-Lipschitz property}. 
\end{definition} 

It remains to show that $\phi$ is co-Lipschitz.
Before we prove this, we will observe that the $k^{th}$ derivative of $f_k$ is approximately linear near $0$ and near $\pi$. 
This behavior was the primary reason for our choice of $f_k$.

\begin{lemma}\label{linear-lemma}
There exists  a constant $0<\epsilon< 1$ such that
\[
	f_k^{(k)}(\theta) \ge \frac{\pi^{k+1}(k+1)! }{2}  \cdot \theta \quad\text{if }  0\le \theta \le \epsilon
\]
and 
\begin{align*}
	\left\{\begin{array}{cc}
 f_k^{(k)}(\theta) \ge \frac{\pi^{k+1}(k+1)!}{2}\cdot (\pi - \theta) &\quad\text{if }  \pi-\epsilon\le \theta \le \pi \text{ and } k \text{ is even}\\
& \\
f_k^{(k)}(\theta) \le - \frac{\pi^{k+1}(k+1)!}{2} \cdot (\pi -\theta) & \quad\text{if } \pi-\epsilon \le\theta\le \pi \text{ and } k \text{ is odd.}
\end{array}\right.
\end{align*}
\end{lemma}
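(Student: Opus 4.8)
The core idea is to compute the $k$-th derivative of $f_k(\theta) = \theta^{k+1}(\pi-\theta)^{k+1}$ and isolate its leading behavior near the endpoints. Write $f_k(\theta) = \theta^{k+1}\cdot g(\theta)$ where $g(\theta) := (\pi-\theta)^{k+1}$. By the Leibniz rule,
\[
	f_k^{(k)}(\theta) = \sum_{j=0}^{k} \binom{k}{j} \frac{(k+1)!}{(k+1-j)!}\,\theta^{k+1-j}\, g^{(j)}(\theta).
\]
Every term here has a factor $\theta^{k+1-j}$ with $k+1-j \ge 1$, so each term is divisible by $\theta$; the unique term that is exactly linear in $\theta$ (i.e. has the factor $\theta^{1}$) is the $j=k$ term, which equals $\binom{k}{k}(k+1)!\,\theta\, g^{(k)}(\theta) = (k+1)!\,\theta\,(k+1)!\,(-1)^k\,(\pi-\theta)\cdot(\text{const})$ — more simply, I would just evaluate the dominant part at $\theta = 0$: since $f_k^{(k)}(0)=0$ and $f_k^{(k+1)}(0) = \pi^{k+1}(k+1)!$ (the only surviving contribution when all $k+1$ derivatives land on $\theta^{k+1}$), Taylor's theorem gives $f_k^{(k)}(\theta) = \pi^{k+1}(k+1)!\,\theta + O(\theta^2)$ as $\theta\to 0^+$. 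Hence there is $\epsilon_0>0$ with $f_k^{(k)}(\theta) \ge \tfrac12 \pi^{k+1}(k+1)!\,\theta$ for $0\le\theta\le\epsilon_0$, which is the first claim.

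For the behavior near $\pi$, I would exploit the symmetry $f_k(\pi-\theta) = f_k(\theta)$, which upon differentiating $k$ times yields $f_k^{(k)}(\pi-\theta) = (-1)^k f_k^{(k)}(\theta)$. Substituting $\theta \mapsto \pi-\theta$, this reads: for $\theta$ near $\pi$, $f_k^{(k)}(\theta) = (-1)^k f_k^{(k)}(\pi-\theta)$, and $\pi-\theta$ is small, so the first part applies to give $f_k^{(k)}(\pi-\theta) \ge \tfrac12\pi^{k+1}(k+1)!\,(\pi-\theta)$ when $0\le \pi-\theta\le\epsilon_0$. Therefore $f_k^{(k)}(\theta) \ge \tfrac12\pi^{k+1}(k+1)!\,(\pi-\theta)$ if $k$ is even, and $f_k^{(k)}(\theta) \le -\tfrac12\pi^{k+1}(k+1)!\,(\pi-\theta)$ if $k$ is odd, for $\pi-\epsilon_0 \le \theta\le\pi$. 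Taking $\epsilon := \min\{\epsilon_0,\tfrac12\} < 1$ covers both regions simultaneously and completes the proof.

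The only mildly delicate point is verifying the two Taylor coefficients $f_k^{(k)}(0) = 0$ and $f_k^{(k+1)}(0) = \pi^{k+1}(k+1)!$: the first holds because $f_k$ vanishes to order $k+1$ at $0$ (every term of the Leibniz expansion of $f_k^{(k)}$ carries a positive power of $\theta$), and the second because among the $k+1$-fold derivatives of the product $\theta^{k+1}(\pi-\theta)^{k+1}$, only the term where all $k+1$ derivatives hit the factor $\theta^{k+1}$ survives at $\theta=0$, giving $(k+1)!\cdot(\pi-0)^{k+1}$. I expect the main obstacle, such as it is, is purely bookkeeping — making sure the constant $\pi^{k+1}(k+1)!$ comes out exactly right and that the error term is genuinely $O(\theta^2)$ so that halving the constant absorbs it on a small enough neighborhood; there is no conceptual difficulty, since once the leading Taylor term is pinned down, a standard continuity/remainder estimate does the rest, and the symmetry argument transfers everything to the endpoint $\pi$ for free.
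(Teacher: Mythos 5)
Your proof is correct and essentially matches the paper's: both isolate the leading linear term of $f_k^{(k)}$ near the endpoints, the paper by writing out two explicit expansions (one in $\theta$, one in $\pi-\theta$) via induction, you by the Taylor coefficients $f_k^{(k)}(0)=0$, $f_k^{(k+1)}(0)=\pi^{k+1}(k+1)!$ together with the symmetry $f_k^{(k)}(\pi-\theta)=(-1)^k f_k^{(k)}(\theta)$ to transfer the estimate from $\theta\to 0$ to $\theta\to\pi$, a small economy over the paper's second expansion. One typo worth fixing: in your Leibniz display the second factor should be $g^{(k-j)}$, not $g^{(j)}$ (and hence the $j=k$ term is $(k+1)!\,\theta\,g(\theta)$, not $\theta\,g^{(k)}(\theta)$), though this does not affect your argument since you immediately abandon that route for the Taylor-coefficient computation, which is correct.
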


\begin{proof}
	By induction, 
\[
	f_k^{(k)} (\theta) = (k+1)! \theta (\pi-\theta)^{k+1} + \theta^2 p(\theta)
\]
and
\[
	f_k^{(k)} (\theta) = (k+1)! (-1)^k \theta^{k+1} (\pi-\theta)   + (\pi-\theta)^2 q(\theta),
\]
for some polynomials $p,$ $ q$. 
This implies 
\[
	\lim_{\theta\to 0}\frac{f_k^{(k)}(\theta)}{\theta} = 
	\lim_{\theta \to\pi} \frac{(-1)^k f_k^{(k)}(\theta)}{\pi - \theta} = (k+1)! \cdot  \pi^{k+1}.
\]
The lemma follows. 
\end{proof}

We can now finish the proof of Theorem \ref{biLip-embed-1}, proving that $\phi$ is biLipschitz.

\begin{proof}[Proof of Theorem \ref{biLip-embed-1}]
We proved in Proposition \ref{lip-1} that $\phi$ is Lipschitz. It remains to show $\phi$ is co-Lipschitz, i.e., that  there exists a constant $C>0$ such that 
\[
	d_{cc}(\phi(e^{i\theta}),\phi( e^{-i\eta})) \ge \frac{1}{C} \cdot d_{\mathbb{S}^1} (e^{i\theta},e^{-i\eta})  
\]
for all $e^{i\theta}\in \mathbb{S}_+^1$ and $e^{-i\eta} \in \mathbb{S}_-^1$.

Let $0<\epsilon<1$ be the constant from Lemma \ref{linear-lemma}.
To prove the co-Lipschitz property, it suffices to consider three arrangements of pairs of points $e^{i\theta} \in \mathbb{S}^1_+$ and $e^{-i\eta} \in \mathbb{S}^1_-$, where $0\le \theta, \eta \le \pi$:

\begin{enumerate}[label=(\roman*)]
\item $0 \le \theta,\eta \le \epsilon$, or $\pi-\epsilon \le \theta,\eta\le \pi$ (points are close to each other and the $x$-axis).
\item $\epsilon \le \theta\le \pi-\epsilon$ or $\epsilon\le \eta \le \pi-\epsilon$ (one of the points is far from the $x$-axis).
\item $|\theta - \eta| \ge \pi -2\epsilon$ (arguments are  far from each other). 
\end{enumerate}
(Readers should convince themselves that these cases handle all possible pairs of a point on the upper semicircle and a point on the lower semicircle.)

Case (i):  Fix $0\le \theta,\eta \le\epsilon$. 
By Corollary \ref{cor-bb} and Lemma \ref{linear-lemma}, 
\begin{align*}
	d_{cc}(\phi (e^{i\theta}) , \phi(e^{-i\eta}))&= d_{cc}(j_\theta^k(f_k), j_\eta^k(-f_k))\\
& \gtrsim |f_k^{(k)} (\theta) + f_k^{(k)}(\eta)|\\
& \ge \frac{(k+1)!}{2} \cdot (\theta + \eta)\\
& = \frac{(k+1)!}{2} \cdot d_{\sS^1} (e^{i\theta},e^{-i\eta}). 
\end{align*}
 A similar calculation shows 
\begin{align*}
	d_{cc}(\phi(e^{i\theta} ) ,\phi(e^{-i\eta})) \gtrsim \frac{(k+1)!}{2} \cdot (2\pi - \theta -\eta) = \frac{(k+1)!}{2} \cdot d_{\sS^1} (e^{i\theta},e^{i\eta})
\end{align*} 
for  $\pi-\epsilon \le\theta,\eta\le \pi$.
This handles case (i). 

Case (ii): Suppose $\epsilon \le \theta \le \pi-\epsilon$ and $0\le \eta\le \pi$. 
Then $f_k(\theta) >0$ while $-f_k(\eta) \le 0$.
Hence, $j_\theta^k(f_k) \ne j_\eta^k(-f_k)$, so that
\[
	0 < d_{cc}(j_\theta^k(f_k), j_\eta^k(-f_k)) = d_{cc}(\phi(e^{i\theta} ) , \phi(e^{-i\eta})). 
\]
This implies that the restriction of $d_{cc}$ on the compact set 
\[
	\{\phi(e^{i\theta}): \epsilon \le \theta\le \pi-\theta \} \times \{\phi(e^{-i\eta}): 0 \le \eta \le \pi\}
\]
is strictly positive. 
By the Extreme Value Theorem, there must exist $\delta_1>0$ such that
\[
	d_{cc}(\phi(e^{i\theta} ) , \phi(e^{-i\eta})) >\delta_1
\]
whenever $\epsilon \le \theta \le \pi-\epsilon$ and $0\le \eta\le \pi$. 
By the same argument, there also exists $\delta_2>0$ such that 
\[
	d_{cc}(\phi(e^{i\theta} ) , \phi(e^{-i\eta})) >\delta_2
\]
whenever $0 \le \theta \le \pi$ and $\epsilon\le \eta\le \pi-\epsilon$.
As $\mathbb{S}^1$ is bounded, this handles case (ii).

Case (iii): This case is handled in the same way as case (ii) was. We need only observe that $\{(e^{i\theta}, e^{-i\eta}) \in \sS^1_+\times \sS^1_- : |\theta-\eta| \ge \pi - 2\epsilon, \ 0\le \theta,\eta\le \pi\}$ is compact and $j_\theta^k(f_k) \ne j_\eta^k(-f_k)$ whenever $\theta\ne \eta$. 

This concludes the proof that $\phi$ is co-Lipschitz, hence biLipschitz. 
\end{proof}


\subsection{The embedding does not admit a Lipschitz extension and $\pi_m^{Lip}(J^k(\R)) = 0$}

In this section, we will prove that the embedding from Theorem \ref{biLip-embed-1} does not admit a Lipschitz extension. 
The author originally proved this by modifying an argument of Haj{\l}asz, Schikorra and Tyson for $\mathbb{H}^1$ \cite{HST:HG}. 
Then a reviewer provided a much simpler, clearer proof.
The author wants to reiterate his appreciation to the reviewer for this. 
We will also prove that each of the Lipschitz homotopy groups of $J^k(\R)$ is trivial. 
These proofs will rely on a result of Wenger and Young \cite[Theorem 5]{WY:LHG}, which states in particular that every Lipschitz map from $\mathbb{B}^2$ to $J^k(\R)$ factors through a metric tree.

In   \cite{WY:LHG},  Wenger and Young prove that every Lipschitz mapping from $\sS^m$, $m\ge 2$,  to $\mathbb{H}^1$ factors through a metric tree.
A \emph{metric tree} (or $\R$-tree) is a geodesic metric space for which every geodesic triangle is isometric to a tripod, or equivalently, is $0$-hyperbolic in the sense of Gromov. 
Metric trees are CAT($\kappa$) spaces for all $\kappa \le 0$ and are uniquely geodesic (see Proposition 1.4(1) and Example 1.15(5) of Chapter II.1 in \cite{BH:MS}). 
We note that in his book on the more general $\Lambda$-trees \cite{C:IT}, Chiswell defines metric trees in a manner equivalent to as above (see Lemmata 2.1.6 and 2.4.13 of \cite{C:IT}). 
For a much greater discussion on metric trees, we refer the reader to this book  \cite{C:IT}. 
The first property of metric trees below is usually cited without proof while the second was stated without proof in \cite{WY:LHG}.
We will provide justification here.

\begin{lemma}\label{completion-metric-tree}
For every  metric tree $(Z,d)$, its completion  $(\hat{Z},\hat{d})$  is a metric tree and is Lipschitz contractible. 
\end{lemma}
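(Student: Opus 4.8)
The plan is to establish the two assertions separately. For the first assertion, that the completion $(\hat Z,\hat d)$ of a metric tree is again a metric tree, I would proceed as follows. Recall that $Z$ being a metric tree means $Z$ is geodesic and $0$-hyperbolic (every geodesic triangle is a tripod), equivalently the four-point condition holds: for all $w,x,y,z$,
\[
	\hat d(w,x) + \hat d(y,z) \le \max\{ \hat d(w,y) + \hat d(x,z),\ \hat d(w,z) + \hat d(x,y)\}.
\]
The four-point inequality is preserved under taking limits, so it passes immediately from $Z$ to its completion $\hat Z$ by density and continuity of $\hat d$. The only remaining point is that $\hat Z$ is geodesic. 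Here I would use that a complete metric space satisfying the four-point condition in which any two points are connected by \emph{approximate} midpoints is in fact geodesic; more concretely, since $Z$ is geodesic, any two points of $Z$ are joined by a geodesic, and for two points of $\hat Z$ one approximates by sequences in $Z$, takes the corresponding geodesic segments (viewed as $1$-Lipschitz curves on a common parameter interval after rescaling), and extracts a limit using completeness together with the uniform control the four-point condition gives on how close these segments stay to one another. Equivalently one invokes the standard fact that a complete length space satisfying the four-point condition is a metric tree; I would cite \cite{C:IT} (e.g.\ the characterisations in Lemmata 2.1.6 and 2.4.13 there) and \cite{BH:MS} for this. The main obstacle is making the geodesic-completeness step clean without re-deriving the theory of $\R$-trees — I expect to handle it by the approximate-midpoint argument, which in a complete $0$-hyperbolic space forces genuine midpoints and hence geodesics.

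For the second assertion, that $\hat Z$ is Lipschitz contractible, I would use the structure of metric trees directly. Fix a basepoint $z_0\in \hat Z$. Since $\hat Z$ is a metric tree it is uniquely geodesic, so for each $z\in \hat Z$ let $[z_0,z]$ denote the unique geodesic from $z_0$ to $z$, parametrised proportionally to arclength on $[0,1]$, and write $\gamma_z(t)$ for the point at parameter $t$. Define $H:\hat Z\times[0,1]\to\hat Z$ by $H(z,t) := \gamma_z(t)$, so $H(z,0)=z$ and $H(z,1)=z_0$. It remains to check $H$ is Lipschitz. The key geometric fact is that in a metric tree, for $z,w\in\hat Z$ the geodesics $[z_0,z]$ and $[z_0,w]$ share an initial segment $[z_0,m]$ (where $m$ is the median of $z_0,z,w$) and then separate; from this one computes, for $s,t\in[0,1]$,
\[
	\hat d\bigl(H(z,s),H(w,t)\bigr) \le |s-t|\,\hat d(z_0,z) + \hat d(\gamma_z(t),\gamma_w(t)) \le |s-t|\bigl(\hat d(z_0,z)+\hat d(z_0,w)\bigr) + \hat d(z,w),
\]
using $\hat d(\gamma_z(t),\gamma_w(t)) \le \hat d(z,w)$, which is the nonexpanding property of the "first-point projection" / geodesic retraction in a tree. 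This gives a Lipschitz estimate on any bounded subset of $\hat Z\times[0,1]$; since a single homotopy defined on all of $\hat Z\times[0,1]$ need only be \emph{locally} Lipschitz to witness Lipschitz contractibility in the relevant sense (and on any ball about $z_0$ the constant $\hat d(z_0,z)+\hat d(z_0,w)$ is bounded), this suffices. If a globally Lipschitz contraction of each ball is wanted, one restricts $H$ to $\bar B(z_0,R)\times[0,1]$, which is invariant under $H$ and on which the above bound is uniform. The potential subtlety here — and the part I would be most careful about — is verifying the nonexpansiveness $\hat d(\gamma_z(t),\gamma_w(t))\le \hat d(z,w)$ and the shared-initial-segment claim; both follow from the tripod structure of the triangle $z_0,z,w$ together with unique geodesics, and I would spell this out via the median point $m$, noting $\hat d(z_0,z)=\hat d(z_0,m)+\hat d(m,z)$ and similarly for $w$, and then splitting into the cases according to whether the parameter $t$ falls before or after $m$ on each geodesic.
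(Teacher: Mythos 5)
Your proof is correct but takes a genuinely different route from the paper's, which is essentially a two-line citation. For the first assertion (that the completion is a metric tree), the paper simply cites Chiswell's Theorem 2.4.14 in \cite{C:IT}; you instead sketch a direct derivation via the four-point inequality passing to $\hat{Z}$ by density and continuity of $\hat{d}$, together with completeness plus approximate midpoints (which exist because $Z$ is dense in $\hat Z$ and geodesic) to get that $\hat{Z}$ is geodesic. For the second assertion (Lipschitz contractibility), the paper observes that metric trees are CAT($\kappa$) for all $\kappa\le 0$ and invokes the Lang--Schroeder generalized Kirszbraun theorem \cite[Theorem B]{LS:K}, whereas you build the geodesic contraction $H(z,t)=\gamma_z(t)$ explicitly and verify its Lipschitz estimate by hand using convexity of the metric, i.e.\ $\hat d(\gamma_z(t),\gamma_w(t))\le(1-t)\hat d(z,w)$. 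Both arguments are sound: the paper's is shorter and outsources the work to standard references, while yours is self-contained and makes the Lipschitz constant explicit. Your observation that $H$ is only Lipschitz on bounded subsets of $\hat{Z}\times[0,1]$ when $\hat{Z}$ is unbounded (since the constant involves $\hat{d}(z_0,z)$) is a legitimate point; the same caveat is implicit in the Kirszbraun route, since the boundary data $(z,0)\mapsto z$, $(z,1)\mapsto z_0$ is itself not globally Lipschitz on an unbounded tree. This is harmless for the intended application of the lemma, where the homotopy is only ever evaluated on the compact image $\varphi(\sS^m)\times[0,1]$.
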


\begin{proof}
Let $(Z,d)$ be a metric tree. 
Chiswell proved that the completion of a metric tree $(\hat{Z},\hat{d})$ is still a metric tree \cite[Theorem 2.4.14]{C:IT} (we note that this result is usually attributed to Imrich at \cite{I:OMP}, but the author was unable to track down this work). 
Then since metric trees are CAT($\kappa$) spaces for all $\kappa\le 0$, a version of Kirszbraun's theorem proven by Lang and Schroeder \cite[Theorem B]{LS:K}  implies that $(\hat{Z},\hat{d})$ is Lipschitz contractible. 
\end{proof} 

A metric space $X$ is \emph{quasi-convex} if there exists a constant $C$ such that every two points $x,y\in X$ can be connected by a path of length at most $Cd(x,y)$. 
For example, each sphere $\sS^n$ is quasi-convex. 
In 2014, Wenger and Young proved a factorization result for mappings into purely $2$-unrectifiable spaces.

\begin{theorem}\label{WY-theorem}
\cite[Theorem 5]{WY:LHG}
Let $X$ be a quasi-convex metric space with $\pi_1^{Lip}(X) =0$. 
Let furthermore $Y$ be a purely $2$-unrectifiable metric space.
Then every Lipschitz map from $X$ to $Y$ factors through a metric tree.
That is, there exist a metric tree $Z$ and Lipschitz maps $\varphi:X\to Z$ and $\psi: Z\to Y$ such that $f= \psi \circ \varphi$. 
\end{theorem}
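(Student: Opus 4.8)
The plan is to realize the metric tree as a quotient of $X$, using the theory of metric currents (Ambrosio--Kirchheim, Lang). I would first extract the only two consequences of the hypotheses that are needed. \emph{Pure $2$-unrectifiability of $Y$.} If $\sigma:\B^2\to Y$ is Lipschitz, then the push-forward $\sigma_\#[\B^2]$ of the fundamental current of the disk is an integer-rectifiable $2$-current; it is carried by a $2$-rectifiable subset of $Y$, which is $\mathcal H^2$-null since $Y$ is purely $2$-unrectifiable, so $\sigma_\#[\B^2]$ has zero mass and hence is the zero current. \emph{Triviality of $\pi_1^{Lip}(X)$.} Every Lipschitz loop $\gamma:\sS^1\to X$ extends to a Lipschitz map $\bar\gamma:\B^2\to X$, and since push-forward commutes with the boundary operator, the $1$-current $(f\circ\gamma)_\#[\sS^1]=\partial\big((f\circ\bar\gamma)_\#[\B^2]\big)=0$. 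Combining this with quasi-convexity, which supplies for any $x,y\in X$ a Lipschitz path $\gamma_{xy}$ from $x$ to $y$ of length $\lesssim d_X(x,y)$, one sees that the integer-rectifiable $1$-current $[f\circ\gamma]:=(f\circ\gamma)_\#[I]$ (with $[I]$ the fundamental current of the parameter interval) depends only on the endpoints of $\gamma$: if $\gamma_1,\gamma_2$ both join $x$ to $y$, then $\gamma_1\ast\overline{\gamma_2}$ is a Lipschitz loop, so $[f\circ\gamma_1]-[f\circ\gamma_2]=0$. Write $T(x,y)$ for this well-defined current; it satisfies $\partial T(x,y)=\delta_{f(y)}-\delta_{f(x)}$ and is additive, $T(x,z)=T(x,y)+T(y,z)$.

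Next I would set
\[
\rho(x,y):=\mathbf{M}\big(T(x,y)\big).
\]
Additivity of $T$ and subadditivity of mass give the triangle inequality, and $\rho(x,x)=0$, $\rho(x,y)=\rho(y,x)$ are immediate, so $\rho$ is a pseudometric on $X$. Since $T(x,y)$ is path-independent, $\rho(x,y)=\mathbf{M}([f\circ\gamma_{xy}])\le\operatorname{Lip}(f)\cdot\mathrm{length}(\gamma_{xy})\lesssim d_X(x,y)$, so the quotient map is Lipschitz; and the standard lower bound $\mathbf{M}(S)\ge d_Y(p,q)$ for a $1$-current $S$ with $\partial S=\delta_q-\delta_p$, applied to $S=T(x,y)$, gives $\rho(x,y)\ge d_Y(f(x),f(y))$. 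Let $Z$ be the completion of the metric quotient $(X/\rho)$, let $\varphi:X\to Z$ be the composition of the quotient map with the inclusion (Lipschitz by the upper bound), and let $\psi:Z\to Y$ be the extension of $[x]\mapsto f(x)$, which is well-defined and $1$-Lipschitz by the lower bound (extending it requires $Y$ complete, which we may assume, or pass to the completion of $Y$). By construction $f=\psi\circ\varphi$, and $Z$ is a complete metric space.

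It then remains to prove that $Z$ is a metric tree, that is, that it is geodesic and $0$-hyperbolic. Geodesicity I expect to be comparatively routine: choosing non-self-cancelling path representatives of the currents $T(x,y)$ produces paths in $(X/\rho)$ realizing $\rho$, from which one extracts approximate midpoints and, using completeness, geodesics. \textbf{The main obstacle is $0$-hyperbolicity}, i.e., the four-point inequality for $\rho$ among arbitrary $x_1,x_2,x_3,x_4\in X$. Additivity of $T$ reduces this to controlling how the reduced path currents $T(x_i,x_j)$ cancel one another, and this is exactly the point at which the finer geometric-measure-theoretic content of $2$-unrectifiability must re-enter rather than just the vanishing of $2$-currents: filling the relevant loops in $X$ by Lipschitz disks, pushing them forward to $Y$, and slicing via the coarea inequality, one uses the $\mathcal H^2$-nullity of the push-forward disks to extract arbitrarily thin separating curves in the fillings, which forces the currents $T(x_i,x_j)$ to cancel maximally and hence forces the tripod (four-point) inequality. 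Turning this sweeping argument into a uniform quantitative estimate is, I expect, the technical core of the proof.
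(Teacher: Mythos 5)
This theorem is cited from Wenger and Young \cite[Theorem 5]{WY:LHG} and is not proven in the present paper; there is no internal proof here for your proposal to be compared against. That said, your proposal appears to be in the spirit of the actual Wenger--Young argument, and the parts you spell out are sound: pure $2$-unrectifiability annihilates push-forward disk currents (their carrying rectifiable set is $\mathcal{H}^2$-null, so the current has zero mass and vanishes); combining that with $\pi_1^{Lip}(X)=0$ makes $T(x,y)$ path-independent and additive; quasi-convexity makes the quotient map Lipschitz; and the mass lower bound $\mathbf{M}(S)\ge d_Y(p,q)$ for a $1$-current $S$ with $\partial S=\delta_q-\delta_p$ makes $\psi$ Lipschitz. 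All of this is correct.

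The gap is precisely the one you flag: you leave both the geodesicity and, above all, the $0$-hyperbolicity of the quotient unverified, and your sketch of the latter does not identify the real mechanism. Observe that everything you actually prove about $T$ is the cocycle identity $T(x,z)=T(x,y)+T(y,z)$; but that identity is essentially free, since any assignment $x\mapsto T_x$ of $1$-currents satisfies it via $T(x,y):=T_y-T_x$, and $\rho(x,y)=\mathbf{M}(T_y-T_x)$ is generically not a tree metric (straight-line currents in $\R^2$ already violate the four-point condition). The rigidity needed for the tree inequality must come from the simultaneous facts that each $T(x,y)$ is the push-forward of an actual path and that every $2$-current swept out by a Lipschitz homotopy between such paths vanishes --- and your proposed route (slicing, coarea, ``arbitrarily thin separating curves'') does not explain how to turn that into the uniform quantitative four-point inequality. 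As it stands the proposal supplies the scaffolding of the argument but not its technical core, so it cannot be counted as a proof of the theorem.
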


Wenger and Young used this result to prove that $\pi_m^{Lip}(\mathbb{H}^1) =0$ for all $m\ge 2$ \cite[Corollary 4]{WY:LHG}.
We can easily modify their proof to prove the triviality of $\pi_m^{Lip}(J^k(\R))$ for $m\ge 2$.
We only include a  proof to help keep this paper self-contained. 
We note that Lipschitz homotopy groups $\pi_m^{Lip}(J^k(\R))$ are defined in the same way as typical homotopy groups are, except the maps and  homotopies are required to be Lipschitz (see Section 4 of \cite{DHLT:OTL} for a greater discussion). 

\begin{corollary}
 For $m\ge 2$ and $k\ge 1$, $\pi_m^{Lip}(J^k(\R)) = 0$. 
\end{corollary}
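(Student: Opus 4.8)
The plan is to mimic Wenger and Young's argument for $\mathbb{H}^1$ (their Corollary~4) essentially verbatim, using the factorization result Theorem~\ref{WY-theorem} with $X = \sS^m$ and $Y = J^k(\R)$. First I would check the two hypotheses of Theorem~\ref{WY-theorem}. The space $\sS^m$ is quasi-convex (indeed geodesic) and simply connected in the Lipschitz sense for $m\ge 2$, so $\pi_1^{Lip}(\sS^m) = 0$; this last point follows because any Lipschitz loop in $\sS^m$ bounds a Lipschitz disk (e.g.\ by coning off a point not in the image, or by a standard smoothing-and-filling argument). The target $J^k(\R)$ is a Carnot group of step $k+1 \ge 2$, hence purely $2$-unrectifiable: by Pansu--Rademacher any Lipschitz map from a subset of $\R^2$ into $J^k(\R)$ is differentiable a.e.\ with image of the differential a commutative subgroup of the horizontal layer, but the horizontal distribution on $J^k(\R)$ is a contact-type distribution containing no $2$-dimensional abelian (i.e.\ involutive flat) subbundle, forcing the $2$-dimensional Hausdorff measure of the image to vanish. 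I would cite the relevant structural fact (this is standard for jet space Carnot groups, and already implicit in the setup of \cite{RW:LNE}).

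Granting both hypotheses, Theorem~\ref{WY-theorem} says every Lipschitz map $f:\sS^m \to J^k(\R)$ factors as $f = \psi \circ \varphi$ with $\varphi:\sS^m \to Z$ and $\psi:Z\to J^k(\R)$ Lipschitz and $Z$ a metric tree. Then I would pass to the completion: by Lemma~\ref{completion-metric-tree}, $(\hat Z,\hat d)$ is a metric tree that is Lipschitz contractible. Since $\varphi$ is Lipschitz into $Z \subseteq \hat Z$, composing with the Lipschitz contraction $H:\hat Z \times [0,1]\to \hat Z$ of $\hat Z$ to a point gives a Lipschitz homotopy from $\varphi$ to a constant map $\sS^m \to \hat Z$; composing this homotopy with $\bar\psi:\hat Z \to J^k(\R)$ (the Lipschitz extension of $\psi$ to the completion, which exists because $J^k(\R)$ is complete and $\psi$ is Lipschitz) yields a Lipschitz homotopy from $f = \bar\psi\circ\varphi$ to a constant map $\sS^m \to J^k(\R)$. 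Hence every Lipschitz map $\sS^m \to J^k(\R)$ is Lipschitz null-homotopic, i.e.\ $\pi_m^{Lip}(J^k(\R)) = 0$. One should double-check that the homotopy can be taken to fix a basepoint, which follows from the usual argument that for $m\ge 2$ free and based Lipschitz homotopy classes agree (reparametrize the homotopy, or use that $\pi_1^{Lip}$ acts, but here $\pi_1^{Lip}(J^k(\R))$-action is moot since we only need the group to be trivial as a set).

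The main obstacle I expect is verifying pure $2$-unrectifiability of $J^k(\R)$ cleanly: one must argue that the horizontal subbundle of $J^k(\R)$ admits no involutive $2$-plane field, equivalently that a.e.-differentiable Lipschitz images of $\R^2$-subsets are $\mathcal{H}^2$-null. For $\mathbb{H}^1$ this is the classical fact that the Heisenberg group is purely $2$-unrectifiable (Ambrosio--Kirchheim); for $J^k(\R)$ the same mechanism applies because the bracket relation $[\partial_{u_k}, X^{(k)}] = \partial_{u_{k-1}} \ne 0$ obstructs any flat $2$-dimensional Lipschitz image, but I would want to either cite a reference covering jet space (or general step-$\ge 2$ Carnot) groups or supply the short Pansu-differentiability argument. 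Everything else in the proof is a routine assembly of Theorem~\ref{WY-theorem}, Lemma~\ref{completion-metric-tree}, and completeness of $J^k(\R)$.

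\begin{proof}
We verify the hypotheses of Theorem \ref{WY-theorem} with $X = \sS^m$ and $Y = J^k(\R)$. The sphere $\sS^m$ is geodesic, hence quasi-convex, and for $m\ge 2$ one has $\pi_1^{Lip}(\sS^m) = 0$: any Lipschitz loop in $\sS^m$ misses a point, so it lies in a set biLipschitz to $\R^m$ and is therefore Lipschitz null-homotopic. The jet space $J^k(\R)$ is a Carnot group of step $k+1\ge 2$. Its horizontal distribution is bracket-generating with $[\partial/\partial u_k, X^{(k)}] = \partial/\partial u_{k-1}\ne 0$, so it contains no involutive $2$-plane field; by Pansu's differentiability theorem, any Lipschitz map from a subset of $\R^2$ into $J^k(\R)$ is differentiable almost everywhere with the image of its differential contained in an abelian (involutive flat) subgroup of the horizontal layer, which must therefore have dimension at most one. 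It follows that such an image has vanishing $2$-dimensional Hausdorff measure, i.e., $J^k(\R)$ is purely $2$-unrectifiable (this is the jet space analogue of the classical fact for $\mathbb H^1$).

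Now fix $m\ge 2$ and let $f:\sS^m\to J^k(\R)$ be Lipschitz. By Theorem \ref{WY-theorem}, there exist a metric tree $Z$ and Lipschitz maps $\varphi:\sS^m\to Z$, $\psi:Z\to J^k(\R)$ with $f = \psi\circ\varphi$. Let $(\hat Z,\hat d)$ be the completion of $Z$. By Lemma \ref{completion-metric-tree}, $\hat Z$ is a metric tree and is Lipschitz contractible, so there is a Lipschitz map $H:\hat Z\times [0,1]\to \hat Z$ with $H(\cdot,0) = \mathrm{id}_{\hat Z}$ and $H(\cdot,1)\equiv z_0$ for some $z_0\in \hat Z$. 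Since $J^k(\R)$ is complete and $\psi$ is Lipschitz, $\psi$ extends to a Lipschitz map $\bar\psi:\hat Z\to J^k(\R)$. Then
\[
	G:\sS^m\times [0,1]\to J^k(\R),\qquad G(p,t) := \bar\psi\big(H(\varphi(p),t)\big)
\]
is Lipschitz with $G(\cdot,0) = \bar\psi\circ\varphi = f$ and $G(\cdot,1)\equiv \bar\psi(z_0)$. Thus $f$ is Lipschitz homotopic to a constant map. For $m\ge 2$ this gives $\pi_m^{Lip}(J^k(\R)) = 0$, since free and based Lipschitz homotopy classes of maps $\sS^m\to J^k(\R)$ coincide (one reparametrizes $G$ so that it slides the basepoint image along a Lipschitz path and back).
\end{proof}
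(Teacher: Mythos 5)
Your proof takes essentially the same route as the paper: invoke Theorem~\ref{WY-theorem} to factor a Lipschitz map $\sS^m\to J^k(\R)$ through a metric tree, pass to the completion via Lemma~\ref{completion-metric-tree} and completeness of $J^k(\R)$, and compose the Lipschitz contraction of the tree with the two factor maps to produce a Lipschitz null-homotopy. The one substantive difference is how pure $2$-unrectifiability of $J^k(\R)$ is justified: the paper simply cites Magnani \cite[Theorem 1.1]{M:UAR}, whereas you sketch a Pansu-differentiability argument. Your sketch has the right idea (a Pansu differential $\R^2\to J^k(\R)$ is a dilation-equivariant homomorphism landing in the horizontal layer, whose image must be abelian, hence at most one-dimensional since $[\partial_{u_k},X^{(k)}]\neq 0$), but as written it glosses over the fact that Pansu's theorem applies to Lipschitz maps on open subsets, not arbitrary measurable subsets of $\R^2$ as required in the definition of pure unrectifiability; making that step precise is exactly the content of Magnani's result, which is why the paper cites it rather than reproving it. Your extra verifications of the hypotheses of Theorem~\ref{WY-theorem} (quasi-convexity of $\sS^m$, $\pi_1^{Lip}(\sS^m)=0$) and the remark about free versus based homotopy classes are correct observations that the paper leaves implicit; they tighten but do not change the argument.
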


\begin{proof}
Fix $m\ge 2$ and $k\ge 1$. 
Suppose $f:\sS^m\to J^k(\R)$ is Lipschitz.
By a theorem of Magnani, $J^k(\R)$ is purely $2$-unrectifiable \cite[Theorem 1.1]{M:UAR}. 
Hence, by Theorem \ref{WY-theorem}, there exist a metric tree $Z$ and Lipschitz maps $\varphi:\sS^m\to Z$ and $\psi:Z\to J^k(\R)$ such that $f = \psi \circ \varphi$. 
Lemma \ref{completion-metric-tree} combined with the fact that $J^k(\R)$ is complete imply that we may assume $Z$ is complete. 
Furthermore, by Lemma \ref{completion-metric-tree}, there exists a Lipschitz homotopy $h:Z\times [0,1]\to Z$ of the identity map to a constant map. 
Then $\alpha: \sS^m\times [0,1]\to J^k(\R)$ defined by $\alpha(x,t) =( \psi \circ h)(\varphi(x),t) $ is a Lipschitz homotopy of $f$ to a constant map. 
\end{proof}

\begin{proof}[Proof of Theorem \ref{main-theorem} for $n=1$] 
Suppose, for contradiction, that the biLipschitz embedding $\phi:\mathbb{S}^1\to J^k(\R)$ from Theorem \ref{biLip-embed-1} admits a Lipschitz extension $\tilde{\phi}:\mathbb{B}^2\to J^k(\R)$. 
Since $J^k(\R)$ is purely $2$-unrectifiable, Wenger and Young's result (Theorem \ref{WY-theorem}) implies that $\tilde{\phi}$, and hence $\phi$,  factors through a metric tree.
However, any two topological embeddings of $[0,1]$ into a metric tree that share common endpoints must have the same image.
This leads to a contradiction that $\phi$ is injective. 
\end{proof}



\section{Embedding of  sphere into $J^k(\R^n)$}\label{biLip-section}

In this section, we will prove our main theorem, Theorem \ref{main-theorem},  for $n\ge 2$. 
We begin by stating the section's assumptions and  notation. 
We will assume $n\ge 2$. 
Whenever we write $|x|$, we will mean the norm of $x\in \R^n$ with respect to the standard Euclidean metric. 
On the other hand, when we are calculating distances between points and write $\rho(\cdot,\cdot)$, we will be referring to the \emph{Manhattan metric} on Euclidean space. 
Explicitly, for $x,y\in \R^n$,
\[
 	\rho(x,y) := \sum_{i=1}^n |x_i-y_i|. 
\]
In Proposition \ref{phi-Lip},  we will use the geodesic path metric on $\sS^n$ and denote it by $d_{\sS^n}(\cdot,\cdot)$.
Of course, there are no problems switching between  these three metrics since they are all  equivalent (see Theorem 3.1 of \cite{DHLT:OTL} for equivalence of path metric and Euclidean metric). 


\subsection{Construction of biLipschitz embedding $\sS^n\hookrightarrow J^k(\R^n)$}

For the case $n=1$, we implicitly used that the exponential $e^{i\theta}:[0,\pi]\to \sS^1$ is biLipschitz. 
This allowed us to view the upper and lower semicircles as copies of $[0,\pi]$. 
We then employed a smooth function  $f_k:[0,\pi]\to \R$ to define our biLipschitz map $\phi:\sS^1\to J^k(\R)$.
We will follow a similar strategy in higher dimensions.

We begin with some notation. 

\begin{definition}
	Define the \emph{upper hemisphere}
\[
	\sS_+^n := \{(x,t) \in \R^{n+1}= \R^n\times \R: |x|^2 +t^2 = 1, \ t\ge 0\}
\]
and the \emph{lower hemisphere} 
\[
	\sS_-^n := \{(x,t) \in \R^{n+1} = \R^n\times \R: |x|^2 + t^2 = 1, \ t\le 0\}. 
\]
\end{definition}

Note $\sS^n = \sS_+^n \cup \sS_-^n$ with $\sS_+^n \cap \sS_-^n = \sS^{n-1} \times \{0\}$. 
I will later refer to this last set as the \textbf{equator} of $\mathbb{S}^n$. 

Our first step will be to determine how to lift the $n$-ball to the upper hemisphere in a biLipschitz way.
We will accomplish this via polar coordinates. 

\begin{proposition}\label{polar-coord}
	The map $L:\B^n\to \sS^n_+$ defined by
\[
	L(\theta\cdot x) = (x\cdot \sin (\pi\theta/2) , \cos (\pi\theta /2)), \quad \theta \in [0,1], \ x\in \sS^{n-1}
\]
is well-defined and biLipschitz. 
\end{proposition}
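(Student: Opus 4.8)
The plan is to verify the two claims—well-definedness and the biLipschitz property—by first unpacking the polar-coordinate description of $L$. Every point of $\B^n$ other than the origin can be written uniquely as $\theta\cdot x$ with $\theta\in(0,1]$ and $x\in\sS^{n-1}$; the origin corresponds to $\theta=0$ with $x$ arbitrary. For well-definedness the only thing to check is that at $\theta=0$ the formula $L(0\cdot x)=(x\cdot\sin 0,\cos 0)=(0,1)$ is independent of $x$, which is immediate, and that $L$ actually lands in $\sS^n_+$, i.e. $|x\sin(\pi\theta/2)|^2+\cos^2(\pi\theta/2)=\sin^2(\pi\theta/2)+\cos^2(\pi\theta/2)=1$ with $\cos(\pi\theta/2)\ge 0$ for $\theta\in[0,1]$. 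So well-definedness is essentially a one-line check.

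For the biLipschitz property, I would use the standard trick of exhibiting an explicit inverse and showing both $L$ and $L^{-1}$ are Lipschitz. The inverse is the ``normalized stereographic-type'' map: given $(y,t)\in\sS^n_+$ with $|y|^2+t^2=1$, set $\theta=\tfrac{2}{\pi}\arccos t\in[0,1]$, and then $L^{-1}(y,t)=\theta\cdot\frac{y}{|y|}$ when $y\ne 0$ and $L^{-1}(0,1)=0$. One checks $L^{-1}\circ L=\mathrm{id}$ and $L\circ L^{-1}=\mathrm{id}$ using $|y|=\sin(\pi\theta/2)$. Then it suffices to bound the differentials of $L$ and $L^{-1}$ away from the problematic locus and handle that locus (the origin / north pole) separately. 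Concretely, write $L$ in Cartesian coordinates $v\mapsto\bigl(\,v\cdot\operatorname{sinc}(\pi|v|/2)\cdot\tfrac{\pi}{2},\,\cos(\pi|v|/2)\bigr)$ after rewriting $x\sin(\pi\theta/2)=\theta x\cdot\frac{\sin(\pi\theta/2)}{\theta}$; this shows $L$ is smooth on the interior of $\B^n$ (since $\operatorname{sinc}$ and $\cos$ are smooth even functions of $|v|$, hence smooth functions of $|v|^2$), with bounded derivative up to the boundary. Since $\B^n$ is compact this already gives that $L$ is Lipschitz.

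For the lower (co-Lipschitz) bound I would argue that $\|dL_v\|$ is bounded below uniformly. Decomposing a tangent vector into its radial and spherical parts at $v=\theta x$, the radial derivative of $L$ has norm $\tfrac{\pi}{2}$ (differentiating $\theta\mapsto(x\sin(\pi\theta/2),\cos(\pi\theta/2))$ gives a unit-speed-up-to-$\pi/2$ curve), while the spherical part gets stretched by $\sin(\pi\theta/2)/\theta\ge \sin(\pi/2)/1\cdot\text{(a positive constant)}$—more precisely $\sin(\pi\theta/2)/\theta$ is bounded below by a positive constant on $[0,1]$ since it extends continuously to $\pi/2$ at $\theta=0$ and never vanishes on $(0,1]$. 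These two directions are orthogonal in the image (the radial direction maps into the plane spanned by $((x,0),(0,1))$-rotation, the spherical directions map tangent to the latitude sphere), so $\|dL_v w\|\gtrsim|w|$ for all $w$. Compactness of $\B^n$ then upgrades this infinitesimal bound to a genuine lower bound $d_{\sS^n}(L(v),L(v'))\gtrsim|v-v'|$; alternatively one can invoke compactness plus injectivity of $L$ (proved via the explicit inverse) together with an Extreme Value Theorem argument exactly as in the proof of Theorem \ref{biLip-embed-1}, Case (ii).

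The main obstacle is the behavior at the origin of $\B^n$ / north pole of $\sS^n_+$, where polar coordinates degenerate: one must be careful that ``$\sin(\pi\theta/2)/\theta$'' is interpreted by its continuous extension and that the map is genuinely differentiable there, not merely directionally well-behaved. Writing $L$ as a smooth function of $v$ via the even-function observation (so that it is a smooth function of $|v|^2$) cleanly resolves this; once smoothness on all of $\B^n$ is in hand, compactness does the rest and there is no real analytic difficulty remaining.
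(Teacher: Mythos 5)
Your proposal is correct, and it takes a genuinely different route from the paper's proof. The paper argues by a direct two-point estimate: after a rotation so that one of the two points lies on a coordinate half-axis, it computes the Manhattan distance $\rho_{\B^n}((\eta,0),\theta\cdot(x,y))$ and the Manhattan distance between the images in $\sS^n_+$, and compares them term by term using $\sin\theta\approx\theta$ on $[0,\pi/2]$ together with the mutual equivalence of the Euclidean, Manhattan, and geodesic metrics. You instead rewrite $L$ in Cartesian coordinates as $v\mapsto\bigl(v\cdot\tfrac{\sin(\pi|v|/2)}{|v|},\ \cos(\pi|v|/2)\bigr)$ and observe that both entries are even functions of $|v|$, hence smooth functions of $|v|^2$, so $L$ is globally smooth; compactness of $\B^n$ then gives the Lipschitz upper bound for free. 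For the lower bound you show $\|dL_v\|$ is uniformly bounded below by decomposing tangent vectors into radial and spherical components and noting these map to orthogonal directions (meridian versus latitude) with stretch factors $\tfrac{\pi}{2}$ and $\sin(\pi\theta/2)/\theta$, both bounded away from zero, then upgrading the infinitesimal estimate to a global co-Lipschitz bound by combining a near-diagonal Taylor estimate with an injectivity-plus-compactness (Extreme Value Theorem) argument off the diagonal. Both proofs are sound; the paper's is more elementary and avoids differentials entirely, while yours makes the source of nondegeneracy explicit and handles the polar-coordinate singularity at the origin cleanly via the even-function observation. One point worth emphasizing, which you do flag but should not gloss over: a pointwise lower bound on $\|dL_v\|$ alone does not yield a global co-Lipschitz inequality; the additional step (injectivity plus compactness, or equivalently the geodesic convexity of $\sS^n_+$ allowing one to integrate $\|dL^{-1}\|$ along geodesics) is genuinely needed, and is the only place where your argument is more than a one-line computation.
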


\begin{proof}
	It isn't hard to see that $L$ is well-defined. 

Via a rotation, it suffices to assume we have two points $(\eta,0)$, $\theta \cdot (x,y) \in \B^n$, where $0<\eta \le 1$, $0\le \theta \le \eta$, and $(x,y)\in \sS^{n-1} \subset  \R\times \R^{n-1}$. 

First note 
\[
	\rho_{\B^n} ((\eta,0), \theta\cdot (x,y)) = (\eta -\theta x) + \theta \sum_{i=2}^n |y_i|
\]
(recall we are using the Manhattan metric). 
We have (with justification below) 
\begin{align*}
	\rho_{\sS_+^n} \biggl( (\sin & \frac{\pi\eta}{2}, 0,\cos \frac{\pi\eta}{2}) , (x\sin \frac{\pi\theta}{2} , y \sin \frac{\pi \theta}{2} , \cos \frac{\pi\theta}{2})\biggr)\\
& =  \left( \sin \frac{\pi\eta}{2} - x\sin \frac{\pi\theta}{2} \right) + \sin \left(\frac{\pi\theta}{2}\right) \sum_{i=2}^n |y_i| +  \left( \cos \frac{\pi\theta}{2} -\cos \frac{\pi\eta}{2}\right) \\
& = \left( \sin \frac{\pi\eta}{2} - \sin \frac{\pi\theta}{2} \right) + \sin\left( \frac{\pi\theta}{2} \right)(1-x) + \sin \left(\frac{\pi\theta}{2}\right)  \sum_{i=2}^n |y_i|+  \left( \cos \frac{\pi\theta}{2} -\cos \frac{\pi\eta}{2}\right)\\
& \approx |e^{i\pi\eta/2} -e^{i\pi \theta/2}| + \theta(1-x) + \theta \sum_{i=2}^n |y_i| \\
& \approx (\eta - \theta x )+  \theta \sum_{i=2}^n |y_i|\\
& = \rho_{\B^n} ((\eta,0), \theta\cdot (x,y)) . 
\end{align*}
For the first approximation above, we used the fact that the Manhattan metric and standard Euclidean metric are uniformly equivalent.
We also used that   $\sin \theta\approx \theta$ on $[0,\pi/2]$. 
For the second approximation, we used that the Euclidean metric and geodesic path metric are uniformly equivalent on the upper half circle. 
\end{proof}

Recalling the strategy used to embed a circle, we now find a smooth function on $\R^{n}$ to serve as the ``body of our jet.''
For the circle, the main difficulty was finding a  positive function $f_k$ that satisfied 
\[
f_k^{(k)} (\theta) \approx \theta = \rho_{\sS^1}(e^{i\theta},e^{i0}) \quad \text{for } \theta \text{ near } 0
\]
and similar behavior for $\theta $ near $\pi$. 
For general  $n$, the natural choice would be $f(x) := (1-|x|)^{k+1}$.
However, $f$ has a singularity at $0$.
Fortunately, we only need $f$ to equal $(1-|x|)^{k+1}$ near the boundary of $\B^n$.
We encapsulate the necessary conditions of $f$ in the following lemma.

\begin{lemma}\label{body-of-jet}
	There exists a smooth function $f:\R^n\to \R$ satisfying:
\begin{enumerate}[label=(\alph*)]
\item $f(x) = (1-|x|)^{k+1} $ for $\frac{1}{2} \le |x|\le \frac{3}{2}$; and 
\item $f(x)>0$ for $|x|<1$.
\end{enumerate}
\end{lemma}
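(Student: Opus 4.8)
The plan is to build $f$ by gluing the singular model $(1-|x|)^{k+1}$ on the annulus $\{\frac12\le |x|\le \frac32\}$ to a smooth, strictly positive function on the ball $\{|x|\le\frac12\}$, using a radial cutoff. Since the constraint (a) only pins down $f$ on the annulus and (b) asks for positivity only on $|x|<1$, there is no constraint at all on $f$ outside $\{|x|\le \frac32\}$, so the construction is purely local near the origin. First I would fix a smooth bump function $\chi:\R\to[0,1]$ with $\chi(s)=1$ for $s\le \frac14$ and $\chi(s)=0$ for $s\ge \frac12$, which exists by the standard construction of smooth partitions of unity (e.g. \cite[Lemma 2.27]{L:ITS} or any of the references cited for smooth extensions). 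Then I would set
\[
	f(x) := \chi(|x|)\cdot c + \bigl(1-\chi(|x|)\bigr)\cdot (1-|x|)^{k+1},
\]
where $c>0$ is a constant to be chosen (for instance $c=1$, since on the relevant region $(1-|x|)^{k+1}$ stays well away from $c$ only needs $c$ positive).

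The key points to verify are smoothness and the two listed properties. For smoothness: on $\{|x|<\frac12\}$ we have, by the support condition on $1-\chi$, that $f(x)=\chi(|x|)c + (1-\chi(|x|))(1-|x|)^{k+1}$, and the only potentially problematic term is $(1-\chi(|x|))(1-|x|)^{k+1}$ near the origin — but $1-\chi(|x|)$ vanishes for $|x|\le \frac14$, so $f\equiv c$ on $\{|x|\le\frac14\}$, which is smooth; on $\{\frac14\le|x|<\frac12\}$ the function $(1-|x|)^{k+1}$ is smooth because $|x|$ is smooth and bounded away from $1$, and $\chi(|x|)$ is smooth there too. On $\{|x|\ge \frac12\}$ we have $\chi(|x|)=0$, so $f(x)=(1-|x|)^{k+1}$, which is smooth wherever $|x|$ is smooth, i.e. away from the origin, so in particular on all of $\{|x|\ge\frac12\}$. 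Matching across $|x|=\frac12$ is automatic since both formulas agree with the glued formula there. For property (a): on $\{\frac12\le|x|\le\frac32\}$ we have $\chi(|x|)=0$, hence $f(x)=(1-|x|)^{k+1}$ exactly. For property (b): on $\{|x|<\frac14\}$, $f(x)=c>0$; on $\{\frac14\le |x|<\frac12\}$, $f(x)$ is a convex combination of $c>0$ and $(1-|x|)^{k+1}>0$ (since $|x|<1$), hence positive; on $\{\frac12\le|x|<1\}$, $f(x)=(1-|x|)^{k+1}>0$ since $1-|x|>0$.

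I do not expect any genuine obstacle here: the statement is a routine cutoff-gluing lemma, and the only thing one must be slightly careful about is making sure the singular factor $(1-|x|)^{k+1}$ is switched off (by $1-\chi$) on a neighborhood of the origin so that $f$ is smooth there, and switched on (by $\chi$ vanishing) before $|x|$ reaches $\frac12$ so that property (a) holds. The mild bookkeeping is just checking the three radial regions $|x|\le\frac14$, $\frac14\le|x|\le\frac12$, $|x|\ge\frac12$ line up; since $|x|\mapsto |x|$ is smooth on $\R^n\setminus\{0\}$ and the cutoff is chosen to kill the bad term near $0$, everything goes through. If one wants to be even more economical, one can replace the convex-combination form above by $f(x)=(1-|x|)^{k+1}$ for $|x|\ge\frac12$ and any fixed smooth positive extension of the jet of $(1-|x|)^{k+1}$ at $|x|=\frac12$ into the ball (possible by the closed-subset smooth extension theorem cited in the introduction), but the explicit cutoff formula is cleaner and self-contained.
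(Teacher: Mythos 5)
Your construction is correct and is essentially the paper's proof: writing $\alpha(x) := 1-\chi(|x|)$, your formula $f = \chi(|\cdot|)\,c + (1-\chi(|\cdot|))(1-|\cdot|)^{k+1}$ with $c=1$ is exactly the paper's $f = \alpha\cdot(1-|\cdot|)^{k+1} + (1-\alpha)$, and the verification of smoothness near the origin and positivity on $\B^n$ proceeds along the same lines. The only cosmetic difference is that you build the cutoff radially from a one-variable bump, whereas the paper takes a cutoff $\alpha:\R^n\to[0,1]$ directly.
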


\begin{proof}
	Choose a smooth function $\alpha:\R^n\to [0,1]$ satisfying $\alpha =1$ on $\{x:\frac{1}{2}\le |x|\le \frac{3}{2}\}$ and $\alpha =0$ on $\{x:|x| \le \frac{1}{4}\}$. 
Then $\alpha(x)\cdot (1-|x|)^{k+1}$ satisfies property (a).
To satisfy  (b) as well, we merely need to add a smooth, non-negative function that is zero on $\{x:\frac{1}{2}\le |x|\le \frac{3}{2}\}$ and is positive where $\alpha =0$ in $\B^n$. 
But $1-\alpha$ clearly satisfies these conditions.
Hence, $f:\R^n\to \R$ defined by 
\[
	f(x) := \alpha(x) \cdot (1-|x|)^{k+1} + (1-\alpha(x))
\]
works. 
\end{proof}

\begin{definition}\label{smooth-embedding-phi}
Let $f:\R^n\to \R$ be a function satisfying properties (a) and (b) of Lemma \ref{body-of-jet}.
We define $\phi:\sS^n\to J^k(\R^n)$ by
\[
	\phi(x\sin (\pi\theta/2), t) := \left\{
\begin{array}{cl}
j_{\theta\cdot x}^k(f) &\quad\text{if } x\in \sS^{n-1}, \ 0\le \theta\le 1, \ t\ge 0\\
j_{\theta\cdot x}^k(-f) &\quad\text{if } x\in \sS^{n-1}, \ 0\le \theta \le 1, \ t\le 0.
\end{array}\right.
\]
\end{definition}
Observe that $\phi$ is well-defined since $\partial_I f(x) =0$ whenever $|x|=1$ and $|I| \le k$. 
We will prove:

\begin{theorem}\label{biLip-embedding}
The map $\phi:\mathbb{S}^n\to J^k(\R^n)$ is a biLipschitz embedding. 
\end{theorem}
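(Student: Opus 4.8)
The plan is to follow the template established for $n=1$: reduce to checking the bi-Lipschitz estimate on the two hemispheres separately, then handle cross-pairs (one point in $\sS^n_+$, one in $\sS^n_-$) via a case analysis depending on how close the points are to the equator $\sS^{n-1}\times\{0\}$. The hemisphere case is the easy one. By Proposition \ref{polar-coord}, the lift $L:\B^n\to\sS^n_+$ is bi-Lipschitz, so on $\sS^n_+$ the map $\phi$ is just $j^k(f)\circ L^{-1}$ up to bi-Lipschitz change of coordinates. Since $f$ is smooth (hence $f\in C^{k+1}(\R^n)$), Proposition \ref{Lip-prop} gives that $j^k(f)$ is Lipschitz on the compact ball $\B^n$; for the co-Lipschitz bound, the identity $j^k_x(f)^{-1}\odot j^k_y(f) = (y-x, \partial_I f(y)-\partial_I f(x),\ldots)$ together with Corollary \ref{cor-bb} and left-invariance of $d_{cc}$ gives $d_{cc}(j^k_x(f),j^k_y(f))\gtrsim |y-x|\gtrsim\rho(x,y)$. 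The same argument works with $-f$ on $\sS^n_-$. So $\phi|_{\sS^n_+}$ and $\phi|_{\sS^n_-}$ are bi-Lipschitz, and the global Lipschitz bound for $\phi$ follows from the triangle inequality exactly as in Proposition \ref{lip-1}: any geodesic on $\sS^n$ between a point of $\sS^n_+$ and a point of $\sS^n_-$ crosses the equator, and splitting at the crossing point reduces to the two hemisphere estimates.

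The substantive work is the co-Lipschitz estimate for cross-pairs. Write points of $\sS^n_\pm$ via the parametrization as $j^k_{\theta x}(\pm f)$ with $x\in\sS^{n-1}$, $\theta\in[0,1]$; under $L^{-1}$ this corresponds to the point $\theta x\in\B^n$, and distances on $\sS^n$ are comparable (via $L$ and the equivalence of $\rho$ with the path metric) to $\rho$-distances between the corresponding points of $\B^n$. So it suffices to show: for $p=\theta x$, $q=\eta y\in\B^n$ ($x,y\in\sS^{n-1}$, $\theta,\eta\in[0,1]$),
\[
	d_{cc}\big(j^k_p(f),\,j^k_q(-f)\big)\ \gtrsim\ \rho(p,q)\ \text{(when $p$ is "above" and $q$ is "below")},
\]
where the quantity on the left equals $d_{cc}(0, j^k_p(f)^{-1}\odot j^k_q(-f))$, and $j^k_p(f)^{-1}\odot j^k_q(-f)$ has $\R^n$-component $q-p$ and $u_0$-component essentially $-f(q)-f(p)$ (plus, for the $u_I$ with $0<|I|\le k$, combinations of the $\partial_I(\pm f)$ values). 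Following Lemma \ref{linear-lemma} and the case analysis in the proof of Theorem \ref{biLip-embed-1}, I would fix the cutoff radius implicit in property (a) of Lemma \ref{body-of-jet} and split into three regimes: (i) both $p,q$ lie in the annulus $\{\tfrac12\le|x|\le 1\}$ where $f(x)=(1-|x|)^{k+1}$, so that $\partial_I f$ with $|I|=k$ is (up to lower order) comparable to $(1-|x|)$ — here one extracts $d_{cc}\gtrsim |u_I|^{1/(k+1-|I|)}\gtrsim |(1-|p|)+(1-|q|)|^{1/1}$ from the highest-order jet coordinate, which dominates $\rho(p,q)$ when $x$ and $y$ are close and both near $\sS^{n-1}$; (ii) one of $p,q$ lies in the inner region $\{|x|\le\tfrac12\}$, in which case $f(p)$ and $f(q)$ are of definite size, so $f(p)+f(q)$ is bounded below and $d_{cc}\gtrsim|u_0|^{1/(k+1)}$ is bounded below, combined with a compactness/Extreme-Value-Theorem argument exactly as in cases (ii)--(iii) of the $n=1$ proof; and (iii) $x,y\in\sS^{n-1}$ are far apart — again a compactness argument, using that the two jets are distinct whenever the base points differ, since $f>0$ on $\B^n$ forces $f(p)+f(q)>0$ unless both base points are on $\sS^{n-1}$, and there the base points being unequal forces the $x$-coordinates to differ.

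The main obstacle I anticipate is regime (i): unlike the $n=1$ case, there are many $k$-indices $I$ and cross-terms, and I need a clean lower bound on the highest-order jet coordinates of $j^k_p(f)^{-1}\odot j^k_q(-f)$ in terms of $1-|p|$ and $1-|q|$. The point is that for $|x|\in[\tfrac12,1]$ we have $\partial_I f(x) = \partial_I\big((1-|x|)^{k+1}\big)$, and for $|I|=k$ the dominant term as $|x|\to 1$ is (a dimensional constant times) $(1-|x|)\cdot x^I/|x|^k$ or similar, so one must either choose a good single index $I$ (e.g., aligned with the direction of $x$) or argue that $\max_{|I|=k}|\partial_I f(x)|\approx (1-|x|)$ uniformly for $x\in\sS^{n-1}$-directions — a computation analogous to Lemma \ref{linear-lemma} but now vector-valued. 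Having that, Corollary \ref{cor-bb} applied to $j^k_p(f)^{-1}\odot j^k_q(-f)$ picks out $\max\{|q-p|,\ |u_I|^{1/(k+1-|I|)}\}\gtrsim |q-p|+|(1-|p|)+(1-|q|)|$, and since $p$ is "above" and $q$ "below" the signs of the $u_I$-contributions do not cancel, giving $\gtrsim\rho(p,q)$. I would also double-check, as in the $n=1$ case, that regimes (i)--(iii) genuinely exhaust all cross-pairs, since a point of $\B^n$ near the center of the ball can be close to a boundary point only if... — actually the center is at distance $\approx 1$ from the boundary, so regime (ii) pairs always have $\rho(p,q)$ bounded away from $0$, which is why the compactness argument closes.
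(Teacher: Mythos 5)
Your outline tracks the paper's proof closely: the hemisphere bi-Lipschitz estimate via $L$ and $j^k(\pm f)$, the global Lipschitz bound by splitting a geodesic at the equator, and a compactness/Extreme Value Theorem argument for cross-pairs that are either far from the equator or far from each other. That part of the structure is sound and is exactly what the paper does (Lemma \ref{biLip-halves}, Proposition \ref{phi-Lip}, and the "if at least one of (A)--(C) fails" branch of the final proof).

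The step you flag as your "main obstacle" is, however, a genuine gap, and the alternatives you offer do not yet close it. A multi-index cannot be "aligned with the direction of $x$," and, as you compute, the leading term $(1-|x|)\cdot x^I/|x|^k$ of $\partial_I f$ can vanish or flip sign as $x$ moves over $\sS^{n-1}$; the weaker statement $\max_{|I|=k}|\partial_I f(x)|\approx 1-|x|$ also does not by itself rule out cancellation between $\partial_I f(\eta\cdot x)$ and $\partial_I f(\theta\cdot y)$. The paper resolves both issues at once in Lemma \ref{der-lemma}: since the relevant case has $\eta\ge 1-\epsilon$, one has $|\eta\cdot x|\ge\tfrac12$, and pigeonhole gives a coordinate $i$ with $|x_i|\gtrsim 1/\sqrt{n}$; taking the pure multi-index $J=k e_i$, the lemma shows $\frac{\partial^k f}{\partial x_i^k}(x)$ is comparable to $1-|x|$ with a sign determined solely by the parity of $k$ and the sign of $x_i$. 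The third case-split condition, $|\eta\cdot x-\theta\cdot y|\le\frac{1}{4\sqrt{n}}$, then forces $\theta\cdot y_i$ to have the same sign as $\eta\cdot x_i$, so the two $k$-th derivatives reinforce rather than cancel in the coordinate $u_J\bigl(j^k_{\eta\cdot x}(f)^{-1}\odot j^k_{\theta\cdot y}(-f)\bigr)$, and Corollary \ref{cor-bb} gives $d_{cc}\gtrsim (1-\eta)+(1-\theta)\approx s+t$. This coordinate-axis choice, the sign bookkeeping, and the explicit role of the closeness condition (C) are precisely what your regime (i) sketch is missing.
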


As in the circle case, proving that $\phi$ is Lipschitz is easier than proving that $\phi$ is co-Lipschitz (see Definition \ref{co-Lipschitz-def}), so we will do the former first.
Before this, we need to prove that $\phi$ is biLipschitz when restricted to the upper and lower hemispheres.
\begin{lemma}\label{biLip-halves}
 	The restrictions $\phi|_{\sS^n_+}$ and $\phi|_{\sS^n_-}$ are biLipschitz. 
\end{lemma}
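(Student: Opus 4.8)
\textbf{Proof proposal for Lemma \ref{biLip-halves}.}

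The plan is to reduce the statement to Proposition \ref{Lip-prop}, Corollary \ref{cor-bb}, and Proposition \ref{polar-coord}, exactly mirroring the circle case. First I would treat the upper hemisphere; the lower hemisphere is identical after replacing $f$ by $-f$. By Proposition \ref{polar-coord}, the lift $L:\B^n\to \sS^n_+$ is biLipschitz, so it suffices to show that the composition $\phi|_{\sS^n_+}\circ L$, which by Definition \ref{smooth-embedding-phi} is exactly the jet map $j^k(f):\B^n\to J^k(\R^n)$, $y\mapsto j_y^k(f)$, is biLipschitz. Since $f$ is smooth on all of $\R^n$ (Lemma \ref{body-of-jet}) and $\overline{\B^n}$ is compact, Proposition \ref{Lip-prop} immediately gives that $j^k(f)$ restricted to $\B^n$ is Lipschitz: the supremum of the gradient factor is finite on the compact set $\overline{\B^n}$.

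For the co-Lipschitz direction, I would use left-invariance of $d_{cc}$ and Corollary \ref{cor-bb}. For $y,z\in \B^n$ we have
\[
  j_y^k(f)^{-1}\odot j_z^k(f) = \bigl(z-y,\, \{\partial_I f(z) - \partial_{I'}\text{-corrections}\}\bigr),
\]
and in particular the $\R^n$-component of this product is $z-y$. Hence by Corollary \ref{cor-bb},
\[
  d_{cc}(j_y^k(f), j_z^k(f)) = d_{cc}\bigl(0,\, j_y^k(f)^{-1}\odot j_z^k(f)\bigr) \gtrsim |z-y|,
\]
which is the desired lower bound (up to the constant depending only on $k,n$). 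Combining the two bounds shows $j^k(f)|_{\B^n}$ is biLipschitz, and precomposing with the biLipschitz map $L^{-1}$ gives that $\phi|_{\sS^n_+}$ is biLipschitz. The argument for $\phi|_{\sS^n_-}$ is verbatim with $-f$ in place of $f$ and the lower lift (the analogue of $L$ for $\sS^n_-$, obtained by negating the last coordinate), which is still biLipschitz.

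The main subtlety — though it is minor — is bookkeeping the precise form of $j_y^k(f)^{-1}\odot j_z^k(f)$ in the $n\ge 2$ jet group operation $\odot$: one only needs that its base-point coordinate is $z-y$, so the higher-order coordinates can be left unexamined, and the estimate from Corollary \ref{cor-bb} using just the $|x|$-term in the max is all that is required. There is no genuine obstacle here; this lemma is the "easy half" and the real work is deferred to the co-Lipschitz estimate across the equator in the proof of Theorem \ref{biLip-embedding}.
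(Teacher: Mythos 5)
Your proof is correct and takes essentially the same route as the paper: the paper likewise factors $\phi|_{\sS^n_+}$ as $j^k(f)\circ L^{-1}$, obtaining the Lipschitz bound from Proposition \ref{Lip-prop} on the compact set $\overline{\B^n}$ and the co-Lipschitz bound from the Ball-Box Theorem via the base-point coordinate of $j_y^k(f)^{-1}\odot j_z^k(f)$, and then handles $\sS^n_-$ by precomposing with the reflection isometry $(x,t)\mapsto(x,-t)$. Your spelled-out use of Corollary \ref{cor-bb} and left-invariance is exactly what the paper's terser citation of the Ball-Box Theorem amounts to, so there is no gap.
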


\begin{proof}
By Proposition \ref{Lip-prop} and the Ball-Box Theorem, $j^k(f):\mathbb{B}^n \to J^k(\R^n)$ is biLipschitz. 

Let $L:\mathbb{B}^n \to \sS^n_+$ be the biLipschitz map defined in Proposition \ref{polar-coord}. 
Then  the restriction $\phi|_{\sS^n_+} = j^k(f)\circ L^{-1}$ is biLipschitz. As the reflection $R:\sS^n\to \sS^n$ given by $(x,t)\mapsto (x,-t)$, $x\in \B^{n-1}$, $t\in \R$ is an isometry, the restriction $\phi|_{\sS^n_-} = \phi|_{\sS^n_+} \circ R$ is also biLipschitz. 
\end{proof}

It remains to consider the application of  $\phi$ to points on opposite halves of $\sS^n$. 
More precisely, we need to prove
\[
	d_{cc}(j_{\eta\cdot x}^k(f) , j_{\theta\cdot y}^k(-f)) \approx \rho_{\sS^n} ((x\sin (\pi\eta/2), \cos (\pi\eta/2)), (y\sin(\pi\theta/2), -\cos (\pi\theta/2))) 
\]
for $x,y \in \sS^{n-1} , \ 0\le \eta, \theta\le 1. $

Proving that $\phi$ is Lipschitz will be proven in the same way here as it was for $n=1$ (see Proposition \ref{lip-1}).

\begin{proposition}\label{phi-Lip}
$\phi:\sS^n\to J^k(\R^n)$ is Lipschitz. 
\end{proposition}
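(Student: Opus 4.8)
The plan is to mirror exactly the argument given for $n=1$ in Proposition \ref{lip-1}, using the triangle inequality together with the fact (Lemma \ref{biLip-halves}) that $\phi$ is biLipschitz — in particular Lipschitz — when restricted to each of $\sS^n_+$ and $\sS^n_-$. The only pairs of points that need attention are those lying on opposite closed hemispheres, say $p \in \sS^n_+$ and $q\in \sS^n_-$, since for pairs on the same hemisphere the desired bound $d_{cc}(\phi(p),\phi(q)) \lesssim d_{\sS^n}(p,q)$ is immediate from Lemma \ref{biLip-halves}.

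First I would take a geodesic in $\sS^n$ joining $p\in\sS^n_+$ to $q\in\sS^n_-$; such a geodesic necessarily crosses the equator $\sS^{n-1}\times\{0\}$ at some point $w$. (This is the only place the geometry of $\sS^n$ enters, and it is elementary: any path from the closed upper hemisphere to the closed lower hemisphere meets their intersection.) Crucially, $w$ lies in \emph{both} $\sS^n_+$ and $\sS^n_-$, so both $d_{\sS^n}(p,w) = d_{\sS^n_+}(p,w)$-type comparisons apply, and moreover the subarcs $pw$ and $wq$ of the geodesic are themselves geodesics realizing $d_{\sS^n}(p,w)$ and $d_{\sS^n}(w,q)$, whence $d_{\sS^n}(p,w) + d_{\sS^n}(w,q) = d_{\sS^n}(p,q)$. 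Then the triangle inequality in $J^k(\R^n)$ gives
\[
	d_{cc}(\phi(p),\phi(q)) \le d_{cc}(\phi(p),\phi(w)) + d_{cc}(\phi(w),\phi(q)) \lesssim d_{\sS^n}(p,w) + d_{\sS^n}(w,q) = d_{\sS^n}(p,q),
\]
where the middle inequality applies Lemma \ref{biLip-halves} to the pair $p,w\in\sS^n_+$ and to the pair $w,q\in\sS^n_-$, with a single constant taken to be the larger of the two Lipschitz constants.

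I do not expect any real obstacle here; the statement is a soft consequence of Lemma \ref{biLip-halves} and the triangle inequality, exactly as in the $n=1$ case. The one point to state carefully is that $\phi$ is well-defined and continuous across the equator — which was already noted right after Definition \ref{smooth-embedding-phi} using $\partial_I f(x)=0$ for $|x|=1$, $|I|\le k$ — so that the two one-sided estimates glue at $w$ without ambiguity. Equivalently, and perhaps more cleanly, one can phrase it as: since $\phi|_{\sS^n_+}$ and $\phi|_{\sS^n_-}$ are $C$-Lipschitz for a common $C$, and $\sS^n = \sS^n_+\cup\sS^n_-$ is a geodesic space covered by two subsets whose geodesic metrics agree with the restriction of $d_{\sS^n}$, the map $\phi$ is $C$-Lipschitz on all of $\sS^n$; splitting an arbitrary geodesic at an equatorial crossing point makes this precise. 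That is the entire proof.
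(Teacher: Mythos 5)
Your argument is correct and is essentially identical to the paper's: both split a geodesic from $\sS^n_+$ to $\sS^n_-$ at an equatorial point $w$, apply Lemma \ref{biLip-halves} to each of the two sub-pairs, and use the triangle inequality together with additivity of geodesic length along the geodesic. Your parenthetical care about geodesic convexity of the hemispheres and well-definedness at the equator is a nice touch, but the substance of the proof matches the paper's line for line.
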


\begin{proof}
It remains to prove
\[
	d_{cc}(j_{\eta\cdot x}^k(f) , j_{\theta\cdot y}^k(-f)) \lesssim \rho_{\sS^n} ((x\sin (\pi\eta/2), \cos (\pi\eta/2)), (y\sin(\pi\theta/2), -\cos (\pi\theta/2))) 
\]
for $x,y \in \sS^{n-1} $ and $ 0\le \eta, \theta< 1. $

Let $ (x\sin (\pi\eta/2), \cos (\pi\eta/2)) \in \mathbb{S}_+^n$, $(y\sin(\pi\theta/2), -\cos (\pi\theta/2)) \in \mathbb{S}_-^n$, $\gamma:[0,1]\to\mathbb{S}^n$ the geodesic  connecting them, and $r\in [0,1]$ such that $\gamma(r)$ is on the equator. 
Note that $j_z^k(f) = \gamma(r)= j_z^k(-f)$ if $\gamma(r) = (z,0)$. 
By Lemma \ref{biLip-halves}, 
\begin{align*}
	d_{cc}(j_{\eta\cdot x}^k&(f) , j_{\theta\cdot y}^k(-f) ) \\
&\le d_{cc} (j_{\eta\cdot k}^k(f) , j_z^k(f)) + d_{cc} (j_z^k(-f), j_{\theta\cdot y}^k(-f)) \\
&\approx \rho_{\sS^n} ((x\sin(\pi\eta/2),\cos(\pi\eta/2)),(z,0)) + d_{\sS^n} ((z,0), (y\sin(\pi\theta/2),-\sin(\pi\eta/2)))\\
&\approx d_{\sS^n} ((x\sin(\pi\eta/2),\cos(\pi\eta/2)),(z,0)) + \tilde{d}_{\sS^n} ((z,0), (y\sin(\pi\theta/2), -\sin(\pi\eta/2)))\\
&= d_{\sS^n}  ((x\sin(\pi\eta/2), \cos(\pi\eta/2)), (y\sin(\pi\theta/2),-\cos(\pi\theta/2)))\\
&\approx \rho_{\sS^n}  ((x\sin(\pi\eta/2), \cos(\pi\eta/2)), (y\sin(\pi\theta/2),-\cos(\pi\theta/2))),
\end{align*}
where $d_{\mathbb{S}^n}$ denotes the geodesic path metric on $\mathbb{S}^n$. 
\end{proof}

It remains to prove that $\phi:\sS^n\to J^k(\R^n)$ is co-Lipschitz. 
As in the initial case $n=1$, we first need to prove that certain $k^{th}$-order derivatives of $(1-|x|)^{k+1}$ are approximately linear near the boundary of $\B^n$.

\begin{lemma}\label{der-lemma}
Let $f:\R^n\to \R$ be a smooth function satisfying properties (a)-(b) of Lemma \ref{body-of-jet}. 
There exist constants $0<\epsilon <\frac{1}{2}<C$ satisfying the following:
For all $i=1,\ldots , n$ and $x\in \R^n$ satisfying $1-\epsilon \le |x|\le 1$ and $|x_i| > \frac{1}{4\sqrt{n}}$, we have
\[
	\left\{ \begin{array}{cl}
\frac{1-|x|}{C} \le \frac{\partial^kf}{\partial x_i^k}(x) \le C(1-|x|) &\quad\text{if } k \text{ is even}\\
\frac{1-|x|}{C} \le \frac{\partial^kf}{\partial x_i^k}(x) \le C(1-|x|) &\quad\text{if } k \text{ is odd and } x_i <0\\
\frac{1-|x|}{C} \le -\frac{\partial^kf}{\partial x_i^k}(x) \le C(1-|x|) &\quad\text{if } k \text{ is odd and } x_i>0.
\end{array}\right.
\]
\end{lemma}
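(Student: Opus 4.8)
The plan is to reduce everything to a one-variable computation in the radial direction. Near the boundary of $\B^n$, assumption (a) of Lemma \ref{body-of-jet} tells us $f(x) = (1-|x|)^{k+1}$ exactly on the shell $\frac12 \le |x| \le \frac32$, so on the relevant region $1-\epsilon \le |x| \le 1$ we may work with the clean formula $g(x) := (1-|x|)^{k+1}$ and only need to understand $\partial^k g/\partial x_i^k$. First I would introduce $r = |x|$ and differentiate $g = (1-r)^{k+1}$ with respect to $x_i$, using $\partial r/\partial x_i = x_i/r$. Iterating, each differentiation either lowers the exponent of $(1-r)$ by one (picking up a factor $-(k+1-j)\,x_i/r$) or differentiates one of the accumulated rational functions of $x_i$ and $r$; the crucial bookkeeping point is that the \emph{lowest-order} surviving term after $k$ derivatives is
\[
	\frac{\partial^k g}{\partial x_i^k}(x) = (-1)^k (k+1)! \left(\frac{x_i}{r}\right)^k (1-r) + (1-r)^2 \cdot (\text{smooth in } x),
\]
by an induction entirely parallel to the one used in Lemma \ref{linear-lemma} for $f_k^{(k)}$. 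All the higher-order remainder terms carry a factor of at least $(1-r)^2$.

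Given that expansion, the lemma follows by dividing through by $(1-r)$: on the region where $|x_i| > \frac{1}{4\sqrt n}$ and $1-\epsilon \le r \le 1$, the quantity $(x_i/r)^k$ is bounded above and below by positive constants depending only on $n$ and $k$ (since $r \le 1$ gives $|x_i/r| \ge |x_i| \ge \frac{1}{4\sqrt n}$, and $r \ge 1-\epsilon \ge \frac12$ gives $|x_i/r| \le 2|x_i| \le 2$), while the remainder contributes at most $C'(1-r)$ for $r$ in a compact interval. Choosing $\epsilon$ small enough that $(k+1)!\,(4\sqrt n)^{-k}$ dominates the remainder constant $C'\epsilon$, and then choosing $C$ large enough to absorb all constants, gives the two-sided bound
\[
	\frac{1-r}{C} \le (-1)^k \left(\frac{x_i}{r}\right)^k \frac{\partial^k g}{\partial x_i^k}(x) \cdot \frac{1}{(k+1)!} \le C(1-r),
\]
after which one reads off the three cases according to the sign of $(-1)^k (x_i/r)^k$: this is positive when $k$ is even, positive when $k$ is odd and $x_i < 0$, and negative when $k$ is odd and $x_i > 0$, matching the three displayed lines of the lemma.

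The main obstacle is purely organizational rather than conceptual: carrying out the induction so that the claim ``every term other than the displayed one is divisible by $(1-r)^2$'' is manifestly true, and being careful that the factors of $1/r$ introduced at each step are harmless on the annulus $\frac12 \le r \le 1$ (they would blow up near $r = 0$, which is exactly why the hypothesis $|x_i| > \frac{1}{4\sqrt n}$, forcing $r > \frac{1}{4\sqrt n}$, is imposed). I would state the induction hypothesis in the form ``$\partial^j g/\partial x_i^j = (-1)^j \frac{(k+1)!}{(k+1-j)!}(x_i/r)^j (1-r)^{k+1-j} + (1-r)^{k+2-j} R_j(x)$ with $R_j$ smooth on $\frac14 < |x| < \frac74$'' and verify that differentiating once preserves this shape, which is a short computation. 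Everything else — the choice of $\epsilon$ and $C$, the extraction of the sign cases — is then immediate from the displayed expansion.
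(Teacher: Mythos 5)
Your proof is correct and takes essentially the same route as the paper's: an induction on the order of differentiation isolating the leading term $(-1)^k(k+1)!\,(x_i/|x|)^k(1-|x|)$ with a remainder divisible by $(1-|x|)^2$, followed by the observation that the hypotheses on $|x_i|$ and $|x|$ keep the leading coefficient bounded away from zero and make the remainder negligible. You are simply more explicit about the induction hypothesis and the choice of $\epsilon$, $C$ than the paper, which just asserts ``by induction, there exists a smooth function $g_i$'' and then ``the lemma follows.''
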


\begin{proof}
Fix $i=1,\ldots , n$. 
By condition (a), $f(x)  = (1-|x|)^{k+1}$ for $\frac{1}{2}<|x| <\frac{3}{2}$. 
We have 
\[
	\frac{\partial f}{\partial x_i} (x)  = (1-|x|)^k \cdot \frac{ - (k+1) x_i}{\sqrt{x_1^2 + \cdots + x_n^2}}  \quad \text{for } \frac{1}{2}<|x|<\frac{3}{2}.
\]
By induction,  there exists a smooth function $g_i:\{x\in \R^n:\frac{1}{2} \le |x|\le \frac{3}{2}\}\to\R$ such that
\[
	\frac{\partial^kf}{\partial x_i^k}(x) = (1-|x|) \cdot \frac{(-1)^k (k+1)! x_i^k}{(x_1^2 +\cdots + x_n^2)^{\frac{k}{2}}} + (1-|x|)^2 g_i(x) 
\]
for $\frac{1}{2} < |x|<\frac{3}{2}$. 
Restricting to $x$ with $|x_i|\ge \frac{1}{4\sqrt{n}}$, the second term becomes relatively neglible as $|x|\to 1$. 
The lemma follows. 
\end{proof}

We can now prove that $\phi:\mathbb{S}^n\to J^k(\R^n)$ is co-Lipschitz, hence biLipschitz by Proposition \ref{phi-Lip}.

\begin{proof}[Proof of Theorem \ref{biLip-embedding}]
It remains to prove that $\phi$ is co-Lipschitz, i.e., that exists a constant $D$ such that 
\[
	d_{cc} (j_{\eta\cdot x}^k(f) , j_{\theta\cdot y}^k(-f)) \ge \frac{1}{D} \cdot \rho_{\sS^n} ((x\sin (\pi\eta/2),s), (y\sin (\pi\theta/2),-t)). 
\]
for all points $(x\sin (\pi\eta/2),s) , $ $(y\sin (\pi\theta/2), -t) \in \sS^n$ with $x,y\in \sS^{n-1}$, $s,t > 0$,   and $0\le \eta,\theta\le 1$.

Let $\epsilon,$ $C$ be the constants from Lemma \ref{der-lemma}.
Consider the following three properties:
\begin{enumerate}[label=(\Alph*)]
\item $\eta\ge 1-\epsilon$.
\item $\theta\ge 1-\epsilon$. 
\item $|\eta\cdot x - \theta \cdot y | \le \frac{1}{4\sqrt{n}}$. 
\end{enumerate}

First suppose that at least one of properties (A)-(C) is not satisfied.
None of the pairs in the compact sets 
\begin{itemize}
\item $\{(\phi(x\sin (\pi\eta/2),s), \phi(y\sin (\pi\theta/2), -t))\in \mathbb{S}^n \times \mathbb{S}^n: s,t \ge 0, \ 1-\epsilon \le \eta \le 1, \ 0 \le \theta \le 1\}$;
\item $\{(\phi(x\sin (\pi\eta/2),s), \phi(y\sin (\pi\theta/2), -t))\in \mathbb{S}^n \times \mathbb{S}^n: s,t \ge 0, \ 1-\epsilon \le \theta \le 1, \ 0 \le \eta \le 1\}$;
\item $\{(\phi(x\sin (\pi\eta/2),s), \phi(y\sin (\pi\theta/2), -t))\in \mathbb{S}^n \times \mathbb{S}^n: s,t \ge 0, \ 0 \le \theta,\eta \le 1, \ |\eta\cdot x - \theta \cdot y | \ge \frac{1}{4\sqrt{n}}\}$
\end{itemize}
are of the form $(x,x) $ for $x\in \mathbb{S}^n$. 
By the Extreme Value Theorem, it follows that there exists $\delta>0$ such that 
\[
	d_{cc}(z_1,z_2) >\delta
\]
for each pair $(z_1,z_2)$ in the above compact sets.

Now suppose that properties (A)-(C) are satisfied. 
By Proposition \ref{polar-coord}, 
\[
	\rho_{\sS^n} ((x\sin (\pi\eta/2),s) , (y\sin (\pi\theta/2), t)) = \rho_{\sS^n}(L(\eta\cdot x), L(\theta\cdot y)) \approx |\eta\cdot x - \theta\cdot y|. 
\]
In particular,
\[
	|x\sin (\pi\eta/2) -y\sin (\pi \theta/2)| \lesssim |\eta\cdot x-\theta\cdot y|. 
\]
As $p(j_{\eta\cdot x}^k (f)^{-1}\odot j_{\theta\cdot y}^k(-f)) = \theta\cdot y-\eta\cdot x$, we then have 
\begin{equation}\label{first-bound}
	 \rho_{\mathbb{B}^n}(x\sin (\pi\eta/2),y\sin (\pi \theta/2))  \approx |x\sin (\pi\eta/2) -y\sin (\pi \theta/2)| \lesssim d_{cc} (j_{\eta\cdot x}^k (f) , j_{\theta\cdot y}^k(-f))
\end{equation}
by Corollary \ref{cor-bb}.

As  
\[
\rho_{\sS^n} ((x\sin (\pi\eta/2),s), (y\sin (\pi\theta/2),-t)) = \rho_{\mathbb{B}^n}(x\sin (\pi\eta/2),y\sin (\pi \theta/2)) + |s+t|,
\]
it remains to  bound $|s+t|$ from above by (a multiple of) $d_{cc} (j_{\eta\cdot x}^k(f) , j_{\theta\cdot y}^k(-f))$.
Note $s = \cos (\pi\eta/2) $ and $t= \cos (\pi\theta/2)$.
Via the Taylor series expansion of cosine at $\pi/2$, 
\[
	\cos \nu = \pi/2- \nu + O ((\pi/2 - \nu)^3) \quad\text{as } \nu \to \pi/2.
\]
It follows that 
\[
	\cos \frac{\pi\nu}{2}  \lesssim \frac{\pi}{2} (1-\nu) \quad\text{for } 1-\epsilon \le \nu \le 1.
\]

Since $|\eta\cdot x |\ge \frac{1}{2}$, we must have $\eta\cdot |x_i| \ge \frac{1}{2\sqrt{n}}$ for some $i$.
Since $|\eta\cdot x - \theta\cdot y| \le \frac{1}{4\sqrt{n}}$, we must have $\theta \cdot y_i \ge \frac{1}{4\sqrt{n}}$ if $x_i>0$ and $\theta\cdot y_i \le -\frac{1}{4\sqrt{n}}$ if $x_i<0$. 
Since  $1-\epsilon \le \eta,\theta \le 1$, Lemma \ref{der-lemma} shows that
\begin{align*}
	\left| \frac{\partial^kf}{\partial x_i^k}(\eta\cdot x) + \frac{\partial^kf}{\partial x_i^k} (\theta\cdot y)\right| 
 \ge \frac{1}{C} \cdot (1-\eta) + \frac{1}{C} \cdot (1-\theta)
 \gtrsim \frac{2}{\pi C} \left( \cos \frac{\pi\eta}{2} + \cos \frac{\pi\theta}{2} \right) 
 = \frac{2}{\pi C}(s+t). 
\end{align*}
Let $J$ be the the $k$-index with $j_i = k$ and $j_l =0$ for $l\ne i$. By Corollary \ref{cor-bb}, 
\begin{align*}
	d_{cc}(j_{\eta\cdot x} ^k(f) , j_{\theta\cdot y}^k(-f))  \gtrsim  
u_J(j_{\eta\cdot x} ^k(f) ^{-1} \odot j_{\theta\cdot y}^k(-f) ) 
 = \left| \frac{\partial^kf}{\partial x_i^k}(\eta\cdot x) + \frac{\partial^kf}{\partial x_i^k} (\theta\cdot y)\right|
\gtrsim \frac{2}{\pi C} (s+t).
\end{align*}
From (\ref{first-bound}), we may conclude
\[	
	d_{cc}(j_{\eta\cdot x} ^k(f) , j_{\theta\cdot y}^k(-f))  \gtrsim \rho_{\sS^n}   ((x\sin (\pi\eta/2),s), (y\sin (\pi\theta/2),-t)) .
\]
\phantom\qedhere
\end{proof}


\subsection{The embedding  does not admit a Lipschitz extension}\label{no-extension-section}

In this subsection, we will finish the proof of Theorem \ref{main-theorem}.
For the aid of the reader, we outline the remaining steps of the proof:\\

\noindent \textbf{Step 1:} 
Define the cylinder $C^{n+1} := \mathbb{B}^n\times [1,1]$ and construct a Lipschitz map $P:C^{n+1}\to \mathbb{B}^{n+1}$.

\noindent \textbf{Step 2:} 
We define the map $\lambda$ that shrinks $[-1,1]^n$ onto $\mathbb{B}^n$ by scaling line segments passing through the origin.
Show that $\lambda $ is invertible and Lipschitz. 
Then define $\Lambda:[-1,1]^{n+1}\to C^{n+1}$ by $\Lambda (x,t) = (\lambda (x),t)$. 

\noindent \textbf{Step 3:} 
Make sure that $f$ satisfies an integral condition, which may require slightly modifying $f$. 

\noindent \textbf{Step 4:} 
Suppose that $\phi$ admitted a Lipschitz extension $\tilde{\phi}$ and consider the Lipschitz constants of dilates of $\tilde{\phi}\circ P \circ \Lambda $ to arrive at a contradiction. \\


We first define a Lipschitz map that maps the cylinder $C^{n+1}:= \mathbb{B}^n\times[-1,1]$ onto $\mathbb{B}^{n+1}$. 
For some intuition, this map projects $\mathbb{S}^{n-1}\times [-1,1]$ onto $\mathbb{S}^{n-1}\times \{0\}$ and fixes $\{0\}^{n}\times [-1,1]$. 

\begin{definition}
Define $P:C^{n+1}\to \mathbb{B}^{n+1}$ by
\[
	P(\theta\cdot x,t) := (x\sin (\pi\theta/2), t\cos (\pi\theta/2)),
\]
where $x\in \mathbb{S}^{n-1}, \theta\in [0,1]$, and $-1\le t\le 1$.
\end{definition}

\begin{lemma}
 	The map $P:C^{n+1}\to \mathbb{B}^{n+1} $ is Lipschitz. 
\end{lemma}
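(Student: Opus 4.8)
The plan is to verify that $P$ is Lipschitz by checking it is Lipschitz on a few pieces whose union is all of $C^{n+1}$, exploiting the conical structure of the map. First I would observe that $P$ is a composition: in polar coordinates on $\B^n$, write a point as $\theta\cdot x$ with $x\in\sS^{n-1}$ and $\theta\in[0,1]$, and note that $P(\theta\cdot x, t) = (x\sin(\pi\theta/2),\, t\cos(\pi\theta/2))$ is manifestly smooth away from the axis $\{0\}^n\times[-1,1]$, since there $\theta$ and $x$ are smooth functions of $\theta\cdot x$. On any compact subset of $C^{n+1}$ bounded away from this axis, $P$ is therefore $C^1$ with bounded derivative, hence Lipschitz there. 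So the only issue is behaviour near the axis, i.e.\ as $\theta\to 0$.

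Near the axis the key estimate is that $P$ is actually close to the identity-type map $(\theta\cdot x,t)\mapsto(\theta\cdot x, t)$ up to controlled error. Concretely, I would use $\sin(\pi\theta/2)\approx \theta$ and $\cos(\pi\theta/2)\approx 1$ for small $\theta$, together with $1-\cos(\pi\theta/2)\lesssim \theta^2$, to compare $P(\theta\cdot x,t)$ with $P(\theta'\cdot x',t')$ for two points both having small radial coordinate. Splitting the difference,
\[
|P(\theta\cdot x,t)-P(\theta'\cdot x',t')| \le |x\sin(\tfrac{\pi\theta}{2})-x'\sin(\tfrac{\pi\theta'}{2})| + |t\cos(\tfrac{\pi\theta}{2})-t'\cos(\tfrac{\pi\theta'}{2})|,
\]
and each term is bounded by a constant multiple of $|\theta\cdot x - \theta'\cdot x'| + |t-t'|$: the first term because $\theta\mapsto x\sin(\pi\theta/2)$ is a biLipschitz reparametrization of the ray (this is exactly the kind of estimate already carried out in Proposition \ref{polar-coord}), and the second term by writing $t\cos(\tfrac{\pi\theta}{2})-t'\cos(\tfrac{\pi\theta'}{2}) = (t-t')\cos(\tfrac{\pi\theta}{2}) + t'(\cos(\tfrac{\pi\theta}{2})-\cos(\tfrac{\pi\theta'}{2}))$ and using $|t'|\le 1$ with $|\cos(\tfrac{\pi\theta}{2})-\cos(\tfrac{\pi\theta'}{2})|\lesssim|\theta-\theta'|\lesssim|\theta\cdot x-\theta'\cdot x'|$ when $x=x'$, or the cruder bound $|\theta|,|\theta'|$ small to absorb the cross term otherwise.

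I expect the main obstacle to be the bookkeeping at the axis when $x\ne x'$: the polar coordinate $x\in\sS^{n-1}$ is discontinuous (undefined) at $\theta\cdot x = 0$, so one cannot naively bound $|x-x'|$ by $|\theta\cdot x-\theta'\cdot x'|$. The clean way around this is the same device used in Proposition \ref{polar-coord}: reduce by rotational symmetry to the case where one of the two points lies on a fixed ray, so that effectively only one angular coordinate varies, and handle the genuinely ``both near the axis'' case by the triangle inequality through the origin, using that $P(0,t)=(0,t)$ and $|P(\theta\cdot x,t)-(0,t)| = |(x\sin(\tfrac{\pi\theta}{2}), t(\cos(\tfrac{\pi\theta}{2})-1))| \lesssim \theta \approx |\theta\cdot x|$. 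Combining: $P$ is Lipschitz on the finitely many pieces (near-axis region handled by the triangle inequality through $\{0\}^n\times[-1,1]$, and the complement handled by the $C^1$ bound), and since these pieces are closed and cover $C^{n+1}$ and $C^{n+1}$ is a geodesic (quasi-convex) metric space, $P$ is globally Lipschitz.
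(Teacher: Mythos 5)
Your approach follows the same basic route as the paper: reduce by rotation, use the estimates from Proposition \ref{polar-coord} for the $\mathbb{B}^n$-part of the image, and split the last coordinate as $t\cos(\pi\theta/2)-t'\cos(\pi\theta'/2)=(t-t')\cos(\pi\theta/2)+t'\bigl(\cos(\pi\theta/2)-\cos(\pi\theta'/2)\bigr)$.

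However, the step where you handle the ``both near the axis'' case by a triangle inequality through $\{0\}^n\times[-1,1]$ is a genuine gap. That detour gives $|P(\theta x,t)-P(\theta'x',t')|\lesssim \theta+\theta'+|t-t'|$, and $\theta+\theta'$ is \emph{not} controlled by $|\theta x-\theta'x'|+|t-t'|$ in general: take $x=x'$, $\theta=\theta'+\varepsilon$ with $\theta,\theta'$ small, $t=t'$, so the actual distance is $\varepsilon$ while $\theta+\theta'\approx 2\theta\gg\varepsilon$. The ``cruder bound for $\theta,\theta'$ small'' you gesture at for the cross term suffers from the same issue. The needed observation — which makes the case split superfluous — is that since $|x|=|x'|=1$ the reverse triangle inequality gives $|\theta-\theta'|=\bigl||\theta x|-|\theta'x'|\bigr|\le|\theta x-\theta'x'|$ unconditionally, so $|\cos(\pi\theta/2)-\cos(\pi\theta'/2)|\le\tfrac{\pi}{2}|\theta-\theta'|\le\tfrac{\pi}{2}|\theta x-\theta'x'|$ holds for all $x,x'$, not only when $x=x'$. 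With this, the paper's single rotational reduction plus the estimate already established in Proposition \ref{polar-coord} covers every pair of points; no $C^1$-vs.-axis dichotomy, no detour through the axis, and no quasi-convexity gluing lemma is needed.
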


\begin{proof}
	Via a rotation, it suffices to prove 
\begin{align*}
	\rho_{\B^{n+1}} ((\sin (\pi\eta/2) &,0 ,t\cos(\pi\eta/2)), (x\sin (\pi\theta/2), y \sin (\pi\theta/2),s\cos (\pi\theta/2)))\\
&\lesssim |(\eta,0,t) - (\theta x , \theta\cdot y , s)|,
\end{align*}
or equivalently
\begin{align*}
 \biggl| \sin (\pi\eta/2) &- x\sin (\pi\theta/2)\biggr| +|y\sin (\pi\theta/2)| + |t\cos (\pi\eta/2) - s\cos (\pi\theta/2)|\\
& \lesssim \rho_{\B^{n}}   ((\eta,0) , (\theta x , \theta\cdot y)) + |t-s|,
\end{align*}
where $-1\le x \le 1$, $(x,y) \in \sS^{n-1}$, $0\le \theta\le \eta$, $0<\eta$, and $-1\le s, t\le 1$. 

From the estimates performed in the proof of Proposition \ref{polar-coord},
\[
	 \biggl| \sin (\pi\eta/2) - x\sin (\pi\theta/2)\biggr| +\sin \left(\frac{\pi\theta}{2}\right) \sum_{i=2}^n |y_i| \lesssim \rho_{\B^{n}} ((\eta,0),  \theta\cdot (x,y))
\]
and
\begin{align*}
	|t\cos (\pi\eta/2) - s\cos (\pi\theta/2)| &\le |t\cos (\pi\eta/2) - s\cos (\pi\eta/2)| + |s\cos (\pi\eta/2) - s\cos (\pi\theta/2)|	\\
&\le |t-s| + |\cos (\pi\eta/2) - \cos (\pi\theta/2)|\\
&\lesssim |t-s| + \rho_{\B^{n}} ((\eta,0),  \theta\cdot (x,y)). 
\end{align*}
It follows that $\cL$ is Lipschitz. 
\end{proof}


We now consider the invertible map that shrinks $[-1,1]^{n+1}$ to $\mathbb{B}^{n+1}$ by scaling lines passing through the origin.

\begin{definition}\label{sector-lambda-def}
For $i=1,\ldots , n$, define
\[
	S_i := \{x\in [-1,1]^{n} : |x_i| \ge |x_j| \text{ for all } j\ne i\}.
\]	
Define $\lambda:[-1,1]^{n} \to \B^{n} $ by
\[
	\lambda(x) :=\left\{\begin{array}{cl}
 \frac{|x_i|}{|x|} \cdot x &\quad\text{if }  x \in S_i\setminus \{0\}\\
0 &\quad\text{if } x=0.
\end{array}\right.
\]
\end{definition}

Note that $[-1,1]^{n}$ is the  union of the $S_i$.
Also each $S_i$ is the disjoint union of two convex sets, the subset of $x$ with $x_i \ge 0$ and the subset with $x_i\le 0$. 

We now show that $\lambda$ is biLipschitz.

\begin{proposition}
The map $\lambda$ is invertible with $\lambda^{-1}:\B^{n}\to [-1,1]^{n}$ given  by
\[
	\lambda^{-1}(u) = \left\{ \begin{array}{cl}
 \frac{|u|}{|u_i|} \cdot u &\quad\text{if } u\ne 0 \text{ and } |u_i|\ge |u_j| \text{ for all } j\ne i\\
0 &\quad\text{if } u=0.
\end{array}\right.
\]
Moreover, $\lambda$ is biLipschitz  with
\[
	\frac{1}{3(n+1)} |x-y| \le |\lambda(x) - \lambda(y)| \le 3|x-y|, \quad x,y\in [-1,1]^{n+1}.
\]
\end{proposition}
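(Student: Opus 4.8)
The plan is to verify the formula for $\lambda^{-1}$ by direct substitution and then establish the two-sided Lipschitz bound by a sector-by-sector argument. First I would check that $\lambda$ is well-defined and that the proposed inverse is its inverse: for $x \in S_i \setminus \{0\}$, the vector $\lambda(x) = \frac{|x_i|}{|x|} x$ is a positive scalar multiple of $x$, so it has the same coordinate of maximal absolute value, namely the $i$th; thus $\lambda(x)$ lies in the image of $S_i$ under the candidate inverse, and applying the displayed formula for $\lambda^{-1}$ gives $\frac{|\lambda(x)|}{|\lambda(x)_i|}\lambda(x)$; since $|\lambda(x)| = \frac{|x_i|}{|x|}|x| = |x_i|$ and $|\lambda(x)_i| = \frac{|x_i|}{|x|}|x_i| = \frac{x_i^2}{|x|}$, this product collapses to $x$. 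The reverse composition is symmetric. One also notes consistency on the overlaps $S_i \cap S_j$ (where $|x_i| = |x_j|$), so $\lambda$ is a well-defined bijection.

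Next I would address the Lipschitz bounds. The upper bound $|\lambda(x) - \lambda(y)| \le 3|x - y|$ can be obtained by showing $\lambda$ is $1$-Lipschitz (or at worst a small-constant-Lipschitz) on each convex piece $S_i^{\pm}$ and then using that two points of $[-1,1]^n$ can be joined by a path through at most a bounded number of such pieces whose total length is comparable to $|x-y|$; alternatively one estimates the norm of $D\lambda$ directly on the interior of each sector, where $\lambda(x) = \frac{|x_i|}{|x|}x$ is smooth, and checks the bound is uniform. The co-Lipschitz bound $|\lambda(x) - \lambda(y)| \ge \frac{1}{3(n+1)}|x-y|$ follows by the same token applied to $\lambda^{-1}$: show $\lambda^{-1}$ is $3(n+1)$-Lipschitz on each of the $2n$ convex cones $\{u \ne 0 : |u_i| \ge |u_j|\,\forall j,\ \pm u_i \ge 0\}$, which cover $\B^n$, and then note that any two points of $\B^n$ can be connected by a polygonal path meeting at most $n+1$ of these cones (one passes from one cone to an adjacent one each time the index of maximal coordinate changes, and there are $n$ indices), with the total path length at most $3$ times $|u - v|$ because the cones are convex and $\B^n$ is, so each crossing costs a bounded factor. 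Multiplying the per-cone constant by the number of cones traversed yields $3(n+1)$.

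The main obstacle I expect is the bookkeeping in the co-Lipschitz estimate: one must be careful that the ``broken path'' joining $u$ to $v$ through the cones really has length comparable to $|u-v|$ rather than blowing up near the origin or near the cone boundaries, and that the per-cone Lipschitz constant of $\lambda^{-1}$ — where $\lambda^{-1}(u) = \frac{|u|}{|u_i|}u$ and the factor $\frac{|u|}{|u_i|}$ ranges over $[1,\sqrt{n}]$ on the cone — is genuinely bounded independent of $u$. Estimating $\bigl\|D\lambda^{-1}(u)\bigr\|$ on the cone $\{|u_i| \ge |u_j|\}$ is a routine but slightly delicate computation: writing $g(u) = |u|/|u_i|$, one has $D\lambda^{-1} = g\,\mathrm{Id} + u\,(\nabla g)^T$, and on the cone $|\nabla g(u)| \lesssim 1/|u|$ while $|u| \le \sqrt{n}|u_i|$, so the singular $1/|u|$ is cancelled by the factor $u$, giving a bound like $3$. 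Once the per-cone bound and the bounded number of cone crossings are in hand, the final inequality follows immediately. Everything else is elementary substitution, so after handling this one quantitative lemma the proposition is complete.
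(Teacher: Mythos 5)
Your overall architecture matches the paper's: verify invertibility by direct substitution, prove a per-sector Lipschitz bound (where each sector is convex and $\lambda$, respectively $\lambda^{-1}$, is smooth), and then pass to arbitrary pairs of points by breaking a path into sector-contained pieces. The paper carries out the per-sector estimate for $\lambda$ by a direct triangle-inequality computation rather than a derivative bound, and then notes that the $\lambda^{-1}$ argument is symmetric; your plan to estimate $\|D\lambda\|$ and $\|D\lambda^{-1}\|$ on each sector is a legitimate alternative route to the same per-sector constants.

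However, there are two genuine problems with how you combine the pieces. First, the per-cone Lipschitz constant of $\lambda^{-1}$ is \emph{not} a dimension-free constant ``like $3$.'' On the cone $\{|u_i|\ge|u_j|\ \forall j\}$ write $\lambda^{-1}(u)=g(u)u$ with $g=|u|/|u_i|$; then $D\lambda^{-1}=gI+u\otimes\nabla g$, and $\|u\otimes\nabla g\|\le\frac{|u|}{|u_i|}+\frac{|u|^2}{|u_i|^2}\le\sqrt n+n$, so $\|D\lambda^{-1}\|\le n+2\sqrt n$; the $\sqrt n$'s you drop do not cancel. Second, the gluing step is misstated: once you have a per-sector Lipschitz constant $C_0$, the right move (and the one the paper makes) is to take the \emph{straight-line} segment from $u$ to $v$ --- which stays in the convex set $\B^n$ --- partition it into subsegments each lying in a single sector, and sum; the lengths telescope to exactly $|u-v|$, so the global Lipschitz constant is $C_0$ itself, independent of how many sectors the segment crosses. ``Multiply the per-cone constant by the number of cones traversed,'' and the claim that one needs a polygonal path of length $\approx 3|u-v|$, are not correct, and a proof written that way would not be valid as stated even though the final numerical constant it produces happens to land near the true one ($3n$, which is what the paper's method actually yields).
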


\begin{proof}
We leave it to the reader to confirm that $\lambda$ is invertible with its inverse having the form as in the statement.

We  show that $\lambda$ is Lipschitz. 
Consider the case $x,y\in S_i$ for some common $i$. 
If $y=0$ or $x=0$, then 
\[
	|\lambda (x) - \lambda(y)| \le |x-y|.
\] 
If $x$, $y\ne 0$ are given with $|x|\le |y|$, 
\begin{align*}
	| \lambda(x) - \lambda(y) | 
& \le \left| \frac{|x_i|}{|x| }\cdot x  - \frac{|y_i|}{|y|} x \right| + \left| \frac{|y_i|}{|y|} x  - \frac{|y_i|}{|y|}\cdot y \right| \\
& \le \left| \frac{|x_i|}{|y|} \cdot ( |y| - |x| )  +  \frac{|x|}{|y|}\cdot |x_i| -   \frac{|x|}{|y|}\cdot |y_i| \right|  + |x-y| \\
& \le \frac{|x_i|}{|y|} \cdot |y-x| + \frac{|x|}{|y|} \cdot |x_i-y_i|  + |x-y| \\
&\le 3|x-y|. 
\end{align*}

For general $x,y\in [-1,1]^{n},$ let $\gamma:[0,1]\to [-1,1]^{n}$ be the straight line path connecting $x$ to $y$.
Fix a partition $0 = t_0< t_1< \ldots  < t_m = 1$ such that each restriction $\gamma|_{[t_j,t_{j+1}]}$ is contained in some $S_{i_j}$.
This is possible because each $S_i$ is the disjoint union of two convex sets.
Then
\[
	|\lambda(x) - \lambda(y)| \le \sum_{i=0}^{m-1} |\lambda(\gamma(t_{i+1}) ) -\lambda(\gamma(t_i))| \le \sum_{i=0}^{m-1} 3|\gamma(t_{i+1}) - \gamma(t_i)| = 3|x-y|.
\]
This proves that $\lambda$ is $3$-Lipschitz. 
The proof that $\lambda^{-1}$ is $\frac{1}{3n}$-Lipschitz is similar.  
\end{proof}

This enables us to define a map that stretches $C^{n+1}$ horizontally to $[-1,1]^{n+1}$ via $\lambda$.
Note that this map will be biLipschitz since $\lambda$ is.

\begin{definition}
	Define $\Lambda:[-1,1]^{n+1}\to C^{n+1}= \mathbb{B}^n\times [-1,1]$ by $\Lambda (x,t)  = (\lambda(x),t)$.
\end{definition}


We take a moment to note that why we choose to use $P\circ \Lambda$ to map a cube onto $\B^{n+1}.$
Note that $P\circ \Lambda$ maps the boundary of $[-1,1]^{n+1}$ onto the boundary of $\B^{n+1}$. 
This will set us up to replicate Rigot and Wenger's proof of Theorem 1.2 in \cite{RW:LNE} for the lack of a Lipschitz extension.
We could have used spherical coordinates to map a cube onto $\B^{n+1}$, but that would have been  more delicate since one would not have the ``mapping of boundaries".

The trickiest part of this proof will be ensuring that the smooth mapping $f:\R^n\to \R$ serving as the ``body'' of the embedding satisfies a nonzero integral condition. 
Before, we need to define integrals of Lipschitz forms on cubes and on the boundaries of cubes. 

\begin{definition}
	Let $g_1,\ldots , g_{n+1}: [-1,1]^{n+1} \to \R$ be Lipschitz functions.
We define 
\[
	\int_{[-1,1]^{n+1}} dg_1\wedge \cdots \wedge dg_{n+1} := \int_{[-1,1]^{n+1}} \det (\partial_{x_j}g_i) dx_1\cdots dx_{n+1}
\]
and 
\begin{align*}
	\int_{\partial [-1,1]^{n+1}} &g_1dg_2 \wedge \cdots \wedge dg_{n+1} \\
&:= \sum_{l=1}^{n+1} \int_{[-1,1]^n} \hat{g}_1^{l,1} \det (\partial_{x_j} \hat{g}_i^{l,1} )_{\substack{i\ge 2\\ j\ne l}} \ d\hat{x}_l - \int_{[0,1]^n} \hat{g}_1^{l,0} \det (\partial_{x_j} \hat{g}_i^{l,0} )_{\substack{i\ge 2 \\ j\ne l}}\ d\hat{x}_l,
\end{align*}
where $\hat{x}_l := (x_1,\ldots , x_{l-1},x_{l+1} ,\ldots , x_{n+1} )\in \R^n$ and 
$\hat{g}_i^{l,m} (\hat{x}_l) := g_i (x_1,\ldots , x_{l-1},m, x_{l+1},\ldots , x_{n+1}) $ for $m=-1,1$. 
\end{definition}

Rigot and Wenger's proof in \cite{RW:LNE} relies on a version of Stokes' Theorem for Lipschitz forms. 
\begin{lemma}\label{stokes-lemma-alternate}\cite[Lemma 3.3]{RW:LNE}
For all Lipschitz functions $g_1,\ldots , g_{n+1}: [-1,1]^{n+1} \to \R$,
\[
	\int_{[-1,1]^{n+1}} dg_1\wedge \cdots \wedge dg_{n+1} =\int_{\partial [-1,1]^{n+1}} g_1dg_2 \wedge \cdots \wedge dg_{n+1} .
\]
\end{lemma}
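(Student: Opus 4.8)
The plan is to deduce the identity from the classical Stokes theorem for smooth forms via mollification. First I would record that both sides make sense: by Rademacher's theorem each $g_i$ is differentiable a.e.\ on $[-1,1]^{n+1}$ with $\|\partial_{x_j}g_i\|_{L^\infty}\le\mathrm{Lip}(g_i)=:L$, so $\det(\partial_{x_j}g_i)\in L^\infty$ and the left-hand integral exists; and the restriction of each $g_i$ to any face of the cube is Lipschitz on that $n$-cube, so each of the $2(n+1)$ summands defining the right-hand integral is an integral of an $L^\infty$ function over $[-1,1]^n$.

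Second, I would set up the approximation. Extend each $g_i$ to a Lipschitz function on $\R^{n+1}$ with the same Lipschitz constant (McShane-Whitney) and set $g_i^\varepsilon:=g_i\ast\rho_\varepsilon$ for a standard mollifier $\rho_\varepsilon$. Then $g_i^\varepsilon\in C^\infty(\R^{n+1})$, $g_i^\varepsilon\to g_i$ uniformly on $[-1,1]^{n+1}$, $\|\partial_{x_j}g_i^\varepsilon\|_{L^\infty}\le L$ for every $\varepsilon$, and $\partial_{x_j}g_i^\varepsilon=(\partial_{x_j}g_i)\ast\rho_\varepsilon\to\partial_{x_j}g_i$ at every Lebesgue point, hence a.e.\ on $[-1,1]^{n+1}$. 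For smooth functions the identity is the classical Stokes theorem on the cube: since $d(dg_j^\varepsilon)=0$,
\[
d\big(g_1^\varepsilon\,dg_2^\varepsilon\wedge\cdots\wedge dg_{n+1}^\varepsilon\big)=dg_1^\varepsilon\wedge\cdots\wedge dg_{n+1}^\varepsilon,
\]
so Stokes gives $\int_{[-1,1]^{n+1}}dg_1^\varepsilon\wedge\cdots\wedge dg_{n+1}^\varepsilon=\int_{\partial[-1,1]^{n+1}}g_1^\varepsilon\,dg_2^\varepsilon\wedge\cdots\wedge dg_{n+1}^\varepsilon$, and unwinding the two integrals shows this is exactly the smooth case of the lemma (the sum over faces in the definition of the boundary integral is the integral of the pullback of $g_1^\varepsilon\,dg_2^\varepsilon\wedge\cdots\wedge dg_{n+1}^\varepsilon$ to the oriented boundary, the orientation signs being the ones built into that definition).

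Third, I would let $\varepsilon\to0$. On the left, $\det(\partial_{x_j}g_i^\varepsilon)\to\det(\partial_{x_j}g_i)$ a.e.\ with $|\det(\partial_{x_j}g_i^\varepsilon)|\le(n+1)!\,L^{n+1}$, so dominated convergence handles the left-hand integral. On the right, fix a face $F$: the traces $g_i^\varepsilon|_F$ converge uniformly to $g_i|_F$ and are bounded in $\mathrm{Lip}(F)$ (a tangential derivative is a coordinate of $\nabla g_i^\varepsilon$, hence $\le L$), so $\nabla_F g_i^\varepsilon|_F\rightharpoonup^\ast\nabla_F(g_i|_F)$ in $L^\infty(F)$. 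The $n\times n$ determinant appearing in the face integral is the Jacobian of $(\hat g_2^\varepsilon,\dots,\hat g_{n+1}^\varepsilon)$ on $F$; by weak continuity of Jacobians along $\mathrm{Lip}(F)$-bounded, uniformly convergent sequences it converges weak-$\ast$ in $L^\infty(F)$ to the corresponding determinant for the $g_i$, and pairing with the uniformly convergent factor $\hat g_1^\varepsilon$ gives convergence of that face integral. Passing to the limit in the displayed equality proves the lemma.

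The main obstacle is this last point on the boundary: a global mollification need not force the \emph{tangential} derivatives of the traces to converge pointwise a.e.\ on the measure-zero faces, so plain dominated convergence fails there; the fix is to observe that the face integrand is a Jacobian determinant, a null Lagrangian, hence weak-$\ast$ continuous under $\mathrm{Lip}$-bounded uniform convergence. Everything else is Rademacher's theorem, smooth Stokes, and routine mollifier estimates.
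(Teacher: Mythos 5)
The paper does not actually supply a proof of this lemma: it is stated as a citation to \cite[Lemma 3.3]{RW:LNE} and taken as a black box, so there is no in-paper argument for you to match. That said, your mollification strategy is the standard way to establish this Lipschitz Stokes identity, and your write-up is essentially correct: extending by McShane, mollifying, applying smooth Stokes to the exact form $d(g_1^\varepsilon\,dg_2^\varepsilon\wedge\cdots\wedge dg_{n+1}^\varepsilon)$, and then passing to the limit. The interior limit is indeed routine (a.e.\ convergence of $\nabla g_i^\varepsilon$ at Lebesgue points plus the uniform $L^\infty$ bound gives dominated convergence of the determinant). You correctly isolate the only genuinely delicate point, namely that the faces are $(n+1)$-null so one cannot conclude pointwise a.e.\ convergence of the tangential gradients of the traces from the global a.e.\ convergence of $\nabla g_i^\varepsilon$; and your fix, that each face integrand is an $n\times n$ Jacobian determinant of a uniformly bounded Lipschitz family converging uniformly, hence converges weak-$*$ in $L^\infty$ by the null-Lagrangian/weak continuity of minors (with the uniformly convergent factor $\hat g_1^\varepsilon$ then posing no difficulty), is exactly the right tool. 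One small presentational caveat: when you say the identity for smooth functions ``is exactly the smooth case of the lemma,'' you should actually check that the signed sum over faces in the paper's (and Rigot--Wenger's) definition of the boundary integral matches the orientation signs $(-1)^{l-1}$ produced by Fubini in $\int_{[-1,1]^{n+1}}d\omega$; as printed, the paper's formula for the boundary integral appears to omit that alternating sign (likely a typo or an implicit column-ordering convention), and you should not wave this away but verify the conventions line up before invoking smooth Stokes.
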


For the next proof, it will be helpful (to avoid repetition) if we set up notation for a function on $\partial [-1,1]^{n+1}$ obtained from a function on $\R^n$. 

\begin{notation}
For each smooth function $g:\R^n\to \R$, define $\bar{g}:\partial [-1,1]^{n+1}\to \R$ by
\[
	\bar{g}(x,t) = \left\{ \begin{array}{cl}
 g(\lambda(x)) &\quad\text{if } x\in [-1,1]^n \text{ and } t=1\\
-g(\lambda(x)) &\quad\text{if } x\in [-1,1]^n \text{ and } t = -1\\
g(\lambda(x)) &\quad\text{if } x \in \partial [-1,1]^n \text{ and } t\in (-1,1). 
\end{array}\right.
\]
\end{notation}

Note that if $g \equiv 0$ on $\mathbb{S}^{n-1}$, then $\bar{g}$ admits the Lipschitz extension $(x,t)\mapsto t g(\lambda(x))$ to $[-1,1]^{n+1}$. 

We now state the extra property we need  our function $f$ to satisfy. 

\begin{proposition}\label{f-with-integral-condition}
There exists a smooth function $f:\R^n\to \R$ satisfying:
\begin{enumerate}[label=(\alph*)]
\item $f(x) = (1-|x|)^{k+1} $ for $\frac{1}{2} \le |x|\le \frac{3}{2}$; and 
\item $f(x)>0$ for $|x|<1$.
\item $\int_{\partial [-1,1]^{n+1} }  \lambda_1d\lambda_2 \wedge \cdots \wedge d\lambda_n \wedge d\bar{f} \ne 0$,
where $\lambda_1,\ldots , \lambda_n$ are the components of $\lambda$.
\end{enumerate}
\end{proposition}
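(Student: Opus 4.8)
The plan is to start from the function $f$ already produced in Lemma \ref{body-of-jet} (equivalently Proposition \ref{f-with-integral-condition}(a)--(b)) and argue that if the integral in (c) happens to vanish, we may perturb $f$ by a bump supported well inside $\B^n$ without disturbing (a) or (b), making the integral nonzero. First I would apply Stokes' theorem for Lipschitz forms (Lemma \ref{stokes-lemma-alternate}) to rewrite the boundary integral as a top-dimensional integral over the cube:
\[
	\int_{\partial [-1,1]^{n+1}} \lambda_1 d\lambda_2\wedge\cdots\wedge d\lambda_n\wedge d\bar f = \int_{[-1,1]^{n+1}} d\lambda_1\wedge\cdots\wedge d\lambda_n\wedge d\bar f.
\]
Here I am implicitly extending $\lambda_1,\dots,\lambda_n$ to functions on $[-1,1]^{n+1}$ by ignoring the last coordinate, and using the Lipschitz extension $(x,t)\mapsto t\,f(\lambda(x))$ of $\bar f$ noted just before the proposition (this uses $f\equiv 0$ on $\sS^{n-1}$, which follows from (a)). Since $\lambda_1,\dots,\lambda_n$ depend only on $x$, the wedge $d\lambda_1\wedge\cdots\wedge d\lambda_n\wedge d\bar f$ picks out exactly the $\partial_t\bar f = f(\lambda(x))$ term, so the integrand is $f(\lambda(x))\,\det(\partial_{x_j}\lambda_i(x))\,dx\,dt$, and integrating in $t$ over $[-1,1]$ gives
\[
	\int_{\partial [-1,1]^{n+1}} \lambda_1 d\lambda_2\wedge\cdots\wedge d\lambda_n\wedge d\bar f = 2\int_{[-1,1]^n} f(\lambda(x))\,\det\big(\partial_{x_j}\lambda_i(x)\big)\,dx.
\]

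Next I would change variables via the biLipschitz map $\lambda:[-1,1]^n\to\B^n$. By the (a.e.-defined) Jacobian change of variables formula, and writing $x = \lambda^{-1}(u)$,
\[
	\int_{[-1,1]^n} f(\lambda(x))\,\det\big(D\lambda(x)\big)\,dx = \pm\int_{\B^n} f(u)\,du,
\]
where the sign is fixed (it is the sign of $\det D\lambda$, which is constant on each of the finitely many convex pieces $S_i^\pm$ and, by orientation considerations, globally of one sign, say $+$, since $\lambda$ is a radial rescaling preserving orientation). The key point is that $\det D\lambda > 0$ a.e. and $f > 0$ on $|u|<1$ by property (b), so
\[
	\int_{\partial [-1,1]^{n+1}} \lambda_1 d\lambda_2\wedge\cdots\wedge d\lambda_n\wedge d\bar f = \pm 2\int_{\B^n} f(u)\,du \neq 0
\]
automatically — no perturbation of $f$ is needed at all. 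So the proof reduces entirely to: (i) justifying the application of Stokes for Lipschitz forms in this slightly degenerate situation where the $\lambda_i$ are constant in $t$; (ii) the observation that only the $\partial_t$-component survives the wedge; (iii) the change of variables formula for the biLipschitz (not smooth) map $\lambda$, which requires knowing $\lambda$ is differentiable a.e. with the expected Jacobian and that the formula holds — this follows since $\lambda$ is piecewise smooth on the $S_i^\pm$ and bijective; and (iv) that $\det D\lambda$ does not change sign, which can be checked on one sector $S_i$ where $\lambda(x) = \frac{|x_i|}{|x|}x$ and the Jacobian is computed explicitly, or deduced from the fact that $\lambda$ restricted to each convex piece is an orientation-preserving homeomorphism onto its image.

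The main obstacle I anticipate is the bookkeeping in step (i)--(ii): making sure the Lipschitz Stokes theorem (Lemma \ref{stokes-lemma-alternate}) is being applied to a legitimate tuple of Lipschitz functions on $[-1,1]^{n+1}$, and that the boundary integral appearing in the proposition's statement matches the left-hand side of that lemma (in particular that the three-part definition of $\bar f$ correctly records the boundary values of $(x,t)\mapsto t f(\lambda(x))$ — on $t=\pm1$ it is $\pm f(\lambda(x))$, and on the lateral boundary $x\in\partial[-1,1]^n$ one has $\lambda(x)\in\sS^{n-1}$ so $f(\lambda(x))=0$, consistent with the third case up to the fact that the value there is $t f(\lambda(x)) = 0 = f(\lambda(x))$). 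Once the identification with $2\int_{\B^n} f$ is in place, properties (a) and (b) are untouched and the conclusion is immediate, so really the content is the verification of the Stokes computation and the change of variables for a merely biLipschitz map; both are routine but must be stated carefully. If one prefers to avoid invoking a Lipschitz change-of-variables theorem, an alternative is to note $\lambda$ is a diffeomorphism on the interior of each sector $S_i$ minus a measure-zero set, apply the smooth change of variables on each piece, and sum.
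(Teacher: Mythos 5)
Your proof is correct, and it takes a genuinely different and simpler route than the paper's. The paper proceeds by perturbation: it assumes the integral vanishes for the given $f$, constructs a bump $\beta$ supported inside $\{x\in S_1: x_1>0,\ |x|<\tfrac12\}$, and shows (via a pullback to $\B^n$ and an integration by parts involving $J(\lambda^{-1})$) that the perturbed function $f+\beta$ has nonzero integral. Your argument instead evaluates the original integral directly. Applying Lemma~\ref{stokes-lemma-alternate} to the tuple $g_1=\lambda_1,\dots,g_n=\lambda_n$ (extended $t$-independently) and $g_{n+1}(x,t)=t\,f(\lambda(x))$ — whose face restrictions agree with $\lambda_1,\dots,\lambda_n,\bar f$, so the boundary integral in the proposition's statement coincides with the left side of the lemma — kills all but the $\partial_t$ column in $\det(\partial_{x_j}g_i)$, giving $2\int_{[-1,1]^n} f(\lambda(x))\det D\lambda(x)\,dx$, and the a.e.\ change of variables for the piecewise-smooth biLipschitz $\lambda$ gives $\pm 2\int_{\B^n} f\,du\neq 0$ by property (b). This is a genuine simplification: it shows (c) is automatic given (a) and (b), and no modification of the function from Lemma~\ref{body-of-jet} is needed.

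The only place you wave your hands is the sign of $\det D\lambda$, and it is worth making this explicit since it is what rules out cancellation between sectors. On the open sector $\{x\in S_1:x_1>0\}$ one has $\lambda(x)=\frac{x_1}{|x|}x$, whence
\[
D\lambda(x)=\frac{1}{|x|}\Bigl(x_1 I + x\,v^T\Bigr),\qquad v:= e_1-\frac{x_1}{|x|^2}\,x,
\]
and since $v^Tx=x_1-x_1=0$, the matrix determinant lemma gives $\det\bigl(x_1I+xv^T\bigr)=x_1^n$, so
\[
\det D\lambda(x)=\Bigl(\frac{x_1}{|x|}\Bigr)^n>0.
\]
By the permutation and sign-flip symmetries of $\lambda$ (which commute with $\lambda$ and conjugate $D\lambda$ by a permutation/signed-permutation matrix, leaving the determinant unchanged), the same formula $\det D\lambda(x)=(\max_i|x_i|/|x|)^n>0$ holds a.e.\ on every sector, so the change of variables contributes a $+$ sign and the integral equals $2\int_{\B^n}f>0$. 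With that detail in place your argument is complete and both shorter and sharper than the one in the paper.
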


\begin{proof}
	By Lemma \ref{body-of-jet}, there exists a smooth function $f:\R^n\to \R$ satisfying properties (a) and (b). 
If $\int_{\partial [-1,1]^{n+1} }  \lambda_1d\lambda_2 \wedge \cdots \wedge d\lambda_n \wedge d\bar{f} \ne 0$, then $f$ works, so assume otherwise. 

Suppose $\beta$ is a smooth function supported in a cube inside $\{x \in S_1: x_1>0, \ |x| <\frac{1}{2}\}$ (recall $S_1 = \{x\in [-1,1]^n: |x_1 | \ge |x_j| \ \text{for all } j >1\}$). 
By linearity,
\begin{align*}
	\int_{\partial [-1,1]^{n+1} }  &\lambda_1d\lambda_2 \wedge \cdots \wedge d\lambda_n \wedge d\overline{(f+\beta)}  \\
&= \int_{\partial [-1,1]^{n+1} }  \lambda_1d\lambda_2 \wedge \cdots \wedge d\lambda_n \wedge d\overline{f} 
+ \int_{\partial [-1,1]^{n+1} }  \lambda_1d\lambda_2 \wedge \cdots \wedge d\lambda_n \wedge d\overline{\beta} \\
& =  \int_{\partial [-1,1]^{n+1} }  \lambda_1d\lambda_2 \wedge \cdots \wedge d\lambda_n \wedge d\overline{\beta} .
\end{align*}
Thus if we show the last integral is nonzero, then $f+\beta$ will work. 

As $\bar{\beta} \equiv 0 $ on $\partial [-1,1]^n \times [-1,1]$, 
 $\int_{\partial [-1,1]^{n}\times [-1,1] }  \lambda_1d\lambda_2 \wedge \cdots \wedge d\lambda_n \wedge d\overline{\beta}  =0$. 
We can simplify 
\begin{align*}
	\int_{\partial [-1,1]^{n+1} }  \lambda_1d\lambda_2 \wedge \cdots \wedge d\lambda_n \wedge d\overline{\beta} = 2 \int_{[-1,1]^n} \lambda_1 d\lambda_2 \wedge \cdots \wedge d\lambda_n \wedge d(\beta\circ \lambda). 
\end{align*}
Note that $\lambda^{-1} $ is smooth on $\text{int}(S_1)\cap \mathbb{B}^n$, where $\text{int}(S_1) $ is the interior of $S_1$. 
Hence,
\begin{align*}
	2 \int_{[-1,1]^n} \lambda_1 d\lambda_2 \wedge \cdots \wedge d\lambda_n \wedge d(\beta\circ \lambda) 
& = 2 \int_{\{x\in \text{int}(S_1): x_1>0\}} \lambda^*(u_1du_2 \wedge \cdots \wedge du_n \wedge d\beta) \ dx\\
& = 2(-1)^{n+1} \int_{ \{u\in \text{int}(S_1) \cap \mathbb{B}^n: u_1>0\}}  u_1\frac{\partial \beta}{\partial u_1}\cdot  J(\lambda^{-1})\ du,
\end{align*}
where we used that $\beta$ is supported in $\{u\in \text{int}(S_1) \cap \mathbb{B}^n: u_1>0\}$ for the first equality and change of coordinates for the second equality.
Integrating by parts,
\[
	\int_{ \{u\in \text{int}(S_1) \cap \mathbb{B}^n: u_1>0\}}  u_1\frac{\partial \beta}{\partial u_1}\cdot  J(\lambda^{-1})\ du = - \int_{ \{u\in \text{int}(S_1) \cap \mathbb{B}^n: u_1>0\}}  \frac{\partial (u_1\cdot J(\lambda^{-1})) }{\partial u_1}\cdot \beta \ du. 
\]
It remains to define $\beta$ carefully to ensure that the last integral is nonzero. 

For $ u \in \text{int}(S_1)$ with $u_1>0$ and $|u|<1$, one can calculate 
\[
	\frac{\partial \lambda^{-1}}{\partial u_1} (u)= -\frac{1}{u_1^2} \cdot |u|u + \frac{1}{u_1} \cdot \left( \frac{u_1}{|u|} \cdot u + |u|\cdot e_1\right)
\]
and
\[
	\frac{\partial \lambda^{-1}}{\partial u_i}(u) = \frac{1}{u_1} \cdot \left( \frac{u_i}{|u|} \cdot u + |u|\cdot e_i\right) , \quad i = 2,\ldots , n.
\]
In particular, $\frac{\partial \lambda^{-1}}{\partial u_j} (u_1,0,\ldots , 0) = e_j$ for $j=1,\ldots , n$ and $0<u_1<\frac{1}{2}$. 
Since $J(\lambda^{-1}) (u_1,0,\ldots , 0) = 1$ for $0<u_1<\frac{1}{2}$, 
\[
	 \frac{\partial (u_1\cdot J(\lambda^{-1})) }{\partial u_1}(1/4,0,\ldots ,0) = 1.
\]
By smoothness, there exists a cube $C\subset \{u\in S_1: |u| < \frac{1}{2}\}$ centered at $(1/4,0,\ldots ,0) $ on which $\frac{\partial (u_1\cdot J(\lambda^{-1})) }{\partial u_1}>0$. 
If $\beta:\R^n\to [0,1]$ is supported on $C$ and $\beta(1/4,0,\ldots , 0) = 1$, then 
\[
	 \int_{\partial [-1,1]^{n+1} }  \lambda_1d\lambda_2 \wedge \cdots \wedge d\lambda_n \wedge d\overline{\beta} = 2(-1)^n \int_{ \{u\in \text{int}(S_1) \cap \mathbb{B}^n: u_1>0\}}  \frac{\partial (u_1\cdot J(\lambda^{-1})) }{\partial u_1}\cdot \beta \ du \ne 0
\]
as desired and $f+\beta$ works. 
\end{proof}


Let $d_0$ be the Riemannian metric distance arising from defining an inner product on $Lie(J^k(\R^n))$ that makes the layer of the stratification orthogonal. 
Define $\iota:(J^k(\R^n),d_{cc}) \to (J^k(\R^n),d_0)$ to be the identity map, which is $1$-Lipschitz.
With the extra integral condition on $f$, we can prove that the corresponding embedding of $\mathbb{S}^n$ into $J^k(\R^n)$ does not admit a Lipschitz extension.

\begin{proof}[Proof of Theorem \ref{main-theorem}]
Fix a smooth function $f:\R^n\to \R$ satisfying properties (a)-(c) of Proposition \ref{f-with-integral-condition}, and let $\phi:\mathbb{S}^n\to J^k(\R^n)$ be the corresponding biLipschitz embedding (see Definition \ref{smooth-embedding-phi} and Theorem \ref{biLip-embedding}). 

Suppose, for contradiction, that $\phi$ admits a Lipschitz extension $\tilde{\phi}:\mathbb{B}^{n+1}\to J^k(\R^n)$.
Let $\lambda $ equal the Lipschitz constant $\text{Lip}(F)$ of the Lipschitz map $F:= \tilde{\phi} \circ P \circ \Lambda$. 
We show that for all $M>0$,
\begin{equation}\label{Lipschitz-inequalities}
	M^{1+\frac{k}{n+1}}\left|\int_{\partial [-1,1]^{n+1} }  \lambda_1d\lambda_2 \wedge \cdots \wedge d\lambda_n \wedge d\bar{f} \right|^{1/(n+1)}\le \text{Lip} (\iota \circ \delta_M \circ F) \le M\lambda. 
\end{equation}
Letting $M\to\infty$, we will arrive at a contradiction. 

The right inequality is clear since $\delta_M$ is $M$-Lipschitz and $\iota$ is $1$-Lipschitz. 

For the other inequality, let $h_i$ denote the $x_i$-coordinate of $F$ for $i=1,\ldots ,n$ and $h_{n+1}$ the $u_0$-coordinate of $\iota \circ \delta_M \circ F$.
For $(x,t) \in \partial [-1,1]^{n+1}$, $h_i(x,t) = M\lambda_i(x)$ for $i=1,\ldots , n$ and $h_{n+1}(x,t) = M^{k+1} \bar{f}(x)$.
This implies
\begin{equation}\label{stokes-equality-in-theorem}
	\int_{\partial [-1,1]^{n+1}} h_1dh_2 \wedge \cdots \wedge dh_{n+1} = 
M^{n+k+1} \int_{\partial[-1,1]^{n+1}} \lambda_1d\lambda_2 \wedge \cdots \wedge d\lambda_n \wedge d\bar{f} \ne 0.
\end{equation}
By Lemma \ref{stokes-lemma-alternate}, 
\[
\int_{\partial [-1,1]^{n+1}} h_1dh_2 \wedge \cdots \wedge dh_{n+1} = \int_{ [-1,1]^{n+1}} dh_1\wedge dh_2 \wedge \cdots \wedge dh_{n+1} .
\]

Define the $(n+1)$-form $\omega:= dx_1\wedge \cdots \wedge dx_n\wedge du_0$ on $J^k(\R^n)$.
By Lemma 3.2 of \cite{RW:LNE}, 
\[
	|\omega_p(v_1,\cdots ,v_{n+1})| \le 1
\]
for all $p\in J^k(\R^n)$ and $v_1,\ldots , v_{n+1}\in T_pJ^k(\R^n)$ with $||v_i||_{g_0} \le 1$.
We have
\[
\left|\int_{ [-1,1]^{n+1}} dh_1\wedge dh_2 \wedge \cdots \wedge dh_{n+1}  \right|= \left| \int_{[-1,1]^{n+1} }(\iota\circ \delta_M \circ F)^*\omega\right| \le Lip(\iota \circ \delta_M \circ F)^{n+1}.
\]
The left inequality of (\ref{Lipschitz-inequalities}) follows from (\ref{stokes-equality-in-theorem}).
We may conclude that $\phi$ does not admit a Lipschitz extension to $\mathbb{B}^{n+1}$. 
\end{proof}








\bibliographystyle{plain}

\bibliography{biLip_bib}

\end{document}